\documentclass[oneside,english, 11pt]{amsart}
\usepackage[T1]{fontenc}
\usepackage[utf8]{inputenc}
\usepackage{xcolor}
\usepackage{amsthm}
\usepackage{amstext}
\usepackage{amssymb}
\usepackage{amsmath}% mathtools includes this so this is optional
\usepackage{mathtools}
\usepackage[mathscr]{eucal}
\usepackage{algorithm}
\makeatletter
%%%%%%%%%%%%%%%%%%%%%%%%%%%%%% Textclass specific LaTeX commands.
\numberwithin{equation}{section}
\numberwithin{figure}{section}
\theoremstyle{plain}
\newtheorem{thm}{\protect\theoremname}
  \theoremstyle{plain}
  \newtheorem{pro}[thm]{\protect\propositionname}
   \theoremstyle{plain}
  \newtheorem{cor}[thm]{\protect\corollaryname}
  \theoremstyle{remark}
  \newtheorem{rem}[thm]{\protect\remarkname}
  \theoremstyle{definition}
  \newtheorem{defn}[thm]{\protect\definitionname}
  \theoremstyle{plain}
  \newtheorem{lem}[thm]{\protect\lemmaname}
    \theoremstyle{definition}

\theoremstyle{remark}

%%%%%%%%%%%%%%%%%%%%%%%%%%%%%% User specified LaTeX commands.

\usepackage[english]{babel}
  \providecommand{\definitionname}{Definition}
  \providecommand{\lemmaname}{Lemma}
  \providecommand{\propositionname}{Proposition}
   \providecommand{\corollaryname}{Corollary}
  \providecommand{\remarkname}{Remark}
\providecommand{\theoremname}{Theorem}
\providecommand{\examplename}{Example}
\providecommand{\assertionname}{Assertion}

\DeclareMathOperator{\re}{Re}
\DeclareMathOperator{\im}{Im}

\DeclareMathOperator{\id}{id}
\DeclareMathOperator{\diff}{Diff}
\DeclareMathOperator{\sing}{Sing}

\DeclareMathOperator{\gl}{GL}

\DeclareMathOperator{\fol}{Fol}
\DeclareMathOperator{\hol}{Hol}

\makeatletter

\makeatother

\newboolean{javier}
\setboolean{javier}{true}
\newcommand{\javier}{\ifthenelse{\boolean{javier}}{\color{red}
    \setboolean{javier}{false}}{\color{black}\setboolean{javier}{true}}}    
    
\newboolean{rudy}
\setboolean{rudy}{true}
\newcommand{\rudy}{\ifthenelse{\boolean{rudy}}{\color{blue}
    \setboolean{rudy}{false}}{\color{black}\setboolean{rudy}{true}}}

\author{Javier Rib\'{o}n}
\author{Rudy Rosas}
%\thanks{The author was supported by the Vicerrectorado the Investigaci\'on de la Pontificia Universidad Cat\'olica del Per\'u}

%\email{rudy.rosas@pucp.pe}

%\address{Pontificia Universidad Cat\'olica del Per\'u, Av Universitaria 1801, Lima, Per\'u.}

%\color{yellow}
%\pagecolor{black}

\makeatletter
\@namedef{subjclassname@2020}{%
  \textup{2020} Mathematics Subject Classification}
\makeatother

\begin{document}

\title[Holomorphic foliations with prescribed holonomy]
{Nondegenerate germs of  holomorphic foliations with prescribed holonomy}
\subjclass[2020]{32S65, 34M35 (Primary)  37F75, 34M04 (Secondary)}
\keywords{holomorphic vector field, singularities of vector fields, holonomy of a foliation}

\maketitle

\section*{Abstract}

We are interested in characterizing the holonomy maps associated to integral curves of non-degenerate 
singularities of holomorphic vector fields. 
Such a description is well-known in dimension 2 where is a key ingredient in the study of reduced singularities.
The most intricate case in the 2 dimensional setting 
corresponds to  (Siegel) saddle singularities. This work treats the analogous problem for 
saddles in higher dimension.

We show that any germ of holomorphic biholomorphism, in any dimension,  
can be obtained as the holonomy map associated to an  integral curve of a saddle singularity. 

A natural question is whether we can prescribe the linear part of the saddle germ 
of vector field provided the holonomy map.
The answer to this question is known to be positive in dimension 2. We see that this is not the case in higher 
dimension. In spite of this, we provide a positive result under a natural condition for the holonomy map.

\section{Introduction}

We consider a germ of singular holomorphic foliation  in $(\mathbb{C}^{n+1},0)$ defined by  a system of the form 

\begin{align}\label{sistema0}
\begin{aligned}
x'&= x\\
y'&= A\cdot y+ G(x,y),
\end{aligned}
\end{align}
where $y=(y_1,\dots,y_n)$,  the function $G\colon (\mathbb{C}^{n+1},0)\to (\mathbb{C}^{n},0)$ is holomorphic with $G(x,0)=0$, 
 $dG (0) =0$  and $A\in\gl(n,\mathbb{C})$.
As it can be easily checked, the singularity 
at the origin is isolated and the complex manifolds $\{x=0\}$ and $\{y=0\}$ are invariant by the foliation.
We denote by $\fol ({n+1})$ the set of germs of foliations as above. 
Given $\mathcal{F}\in \fol(n+1)$, the invariant  manifold $S\colon =\{y=0\}$ has complex dimension one and $S\backslash \{0\}$ is a regular leaf
of $\mathcal{F}$. Thus, we can consider the holonomy associated to a small positively oriented simple loop around the origin in $S\backslash \{0\}$. Up to holomorphic conjugation, this holonomy is a germ
of biholomorphism in $\diff (\mathbb{C}^{n},0)$ of the form 
$$y\mapsto  e^{2\pi i A}\cdot y+ \textrm{h.o.t.}$$  
We denote this map by $\hol(\mathcal{F})$ and call it the holonomy of $\mathcal{F}$ or the holonomy of the system \eqref{sistema0}. In this way we have  the map
  \begin{align}\label{fola}\hol\colon \fol(n+1)\to \diff(\mathbb{C}^{n},0).
 \end{align}
Note that if two foliations ${\mathcal F}$ and ${\mathcal F}'$ in  $\fol(n+1)$ are conjugated by a germ of biholomorphism defined in a neighborhood of the origin and 
preserving $S$, then their holonomies $\hol(\mathcal{F})$ and $\hol(\mathcal{F}')$ are analytically conjugated.
So $\hol$ induces a map $[\hol]$ between the moduli spaces of $\fol(n+1)$  and $\diff(\mathbb{C}^{n},0)$ for the analytic classification.
As a consequence, it is vital to study the properties of the map $[\hol]$ to describe the analytic moduli of foliations in $\fol(n+1)$.
In this paper we show the following theorem.
\begin{thm}\label{maintro}The map $[\hol]$ is surjective.
\end{thm}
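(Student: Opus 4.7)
The plan is to realize $\varphi$ as the monodromy of a foliation constructed by suspension, and then to extend the resulting structure across the singular fiber $\{x=0\}$.

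\textbf{Setup.} Choose $A \in \gl(n,\mathbb{C})$ with $e^{2\pi i A} = d\varphi(0)$; this is always possible since $d\varphi(0)$ is invertible. If $\varphi$ is already linear, the foliation defined by $x'=x$, $y'=Ay$ lies in $\fol(n+1)$ and has holonomy $y \mapsto e^{2\pi i A}y$, so the task is, for general $\varphi$, to find a holomorphic correction $G(x,y)$ with $G(x,0)=0$ and $dG(0)=0$ such that $x'=x$, $y' = Ay + G(x,y)$ has holonomy analytically conjugate to $\varphi$.

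\textbf{Suspension.} Let $\mathbb{H}$ be the universal cover of $\mathbb{D}^*_\epsilon = \{0 < |x| < \epsilon\}$ under $x = e^s$, with deck group generated by $s \mapsto s + 2\pi i$. Equip $\mathbb{H} \times (\mathbb{C}^n,0)$ with the horizontal foliation $\{y = \textrm{const}\}$ and form the quotient by the twisted $\mathbb{Z}$-action $(s,y) \mapsto (s+2\pi i, \varphi(y))$. This yields a holomorphic $(\mathbb{C}^n,0)$-bundle $E \to \mathbb{D}^*_\epsilon$ with an invariant section $\{y=0\}$ and a foliation transverse to the fibers whose monodromy around the puncture is $\varphi$. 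Since $\mathbb{D}^*_\epsilon$ is Stein and $\diff(\mathbb{C}^n,0)$ is path-connected, $E$ admits a holomorphic trivialization (a direct Cousin-type construction suffices). In this trivialization, the foliation is defined on $\mathbb{D}^*_\epsilon \times (\mathbb{C}^n,0)$ by a holomorphic vector field $x \partial_x + xF(x,y)\partial_y$ with $F(x,0) = 0$; trivializing the bundle of course does not trivialize the foliation.

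\textbf{Extension across $\{x=0\}$ (the crux).} To exhibit the foliation as an element of $\fol(n+1)$, one must further extend it across $\{x=0\}$ in the form \eqref{sistema0}: equivalently, $xF(x,y)$ must extend holomorphically across $\{x=0\}$ as $Ay + G(x,y)$, with $G(x,0) = 0$ and $dG(0) = 0$. The trivialization of $E$ is determined only up to a fiber-preserving gauge $(x,y) \mapsto (x, \Psi(x,y))$ with $\Psi(x,\cdot) \in \diff(\mathbb{C}^n,0)$, and this freedom must be exploited to arrange the correct asymptotic behavior at the puncture. The strategy is to construct $\Psi$ iteratively, first formally conjugating the foliation to the linear model $x\partial_x + Ay\partial_y$ order by order in $x$, and then controlling convergence of the formal conjugation by estimates based on the action of the linear flow $e^{sA}$ on the transversal. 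The main obstacle is establishing that the resulting $G$ has a removable (rather than essential) singularity at $\{x=0\}$ and satisfies the prescribed vanishing at the origin; this nonlinear Riemann--Hilbert-type normalization is the technical heart of the proof.
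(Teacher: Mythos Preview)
Your suspension-and-extension outline identifies the right topological picture but leaves the decisive analytic step unproved, and in fact as stated it cannot work. The paper itself shows (Proposition~\ref{pro:example}) that for some $\varphi$ and some choices of $A$ with $e^{2\pi i A}=d\varphi(0)$, there is \emph{no} system of the form \eqref{sistema0} with that linear part whose holonomy is conjugate to $\varphi$. So your opening move ``choose $A$ with $e^{2\pi i A}=d\varphi(0)$'' without further constraint is fatal: for a bad choice of $A$, the extension across $\{x=0\}$ that you call ``the crux'' is simply impossible, not merely technically difficult. The obstruction is concrete and algebraic---it comes from negative resonances of $e^{2\pi i A}$ with respect to $A$---and your sketch of a formal-conjugation-plus-convergence argument does not detect it, because the formal normalization you invoke will itself fail (or produce unremovable terms) precisely when this obstruction is present. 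Your appeal to ``$\mathbb{D}^*_\epsilon$ is Stein and $\diff(\mathbb{C}^n,0)$ is path-connected'' for the bundle trivialization is also not a proof: this is an infinite-dimensional structure group, and even granting a trivialization you have no control on its growth as $x\to 0$, which is exactly what the extension step needs.

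The paper's proof proceeds quite differently. Theorem~\ref{maintro} is reduced to Theorem~\ref{maintro1} by a one-line observation: replacing $A$ by $A-mI$ for $m\gg 1$ preserves $e^{2\pi i A}$, forces negative spectrum, and kills all negative resonances, so one may always \emph{choose} $A$ to satisfy the hypotheses of Theorem~\ref{maintro1}. The substantive work is then Theorem~\ref{maintro1}, and the method there is not a direct extension of a suspension over a punctured disc. Instead one builds a polynomial model foliation whose holonomy agrees with the target to arbitrarily high order (Section~\ref{seccons}), performs a surgery along a transversal to correct the holonomy exactly, producing an abstract foliated complex manifold (Section~\ref{secmain}), and then shows via careful order-of-vanishing estimates (the $O_{\alpha,p}$ calculus of Section~\ref{secspecial}) together with the Nijenhuis--Woolf integrability theorem that the resulting $C^\infty$ almost-complex structure extends in class $C^2$ across $\{x=0\}$ and is integrable (Sections~\ref{secsmooth}--\ref{secextension}). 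The negative-spectrum and no-negative-resonance hypotheses enter precisely to guarantee that the gluing data $\Phi-\mathrm{id}$ vanishes to high enough order in $x$ (Lemma~\ref{expon}) for these estimates to close. Your Riemann--Hilbert heuristic does not substitute for this machinery.
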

If $n=1$,  this result is consequence of the works of Martinet and Ramis in  \cite{MaRa} and P\'{e}rez-Marco and 
Yoccoz  in \cite{PeYo}.  Precisely, from these works we extract the following theorem.
\begin{thm}\label{pema}Let $h(z)=bz+O(z^2)$ be a diffeomorphism in $\diff(\mathbb{C},0)$ with $|b|=1$. Then,
given $a<0$ with $e^{2\pi i a}=b$, the diffeomorphism $h$ is --- up to holomorphic conjugation --- 
the holonomy of  a system of the form 
 \begin{align}\nonumber
\begin{aligned}
x'&= x\\
y'&= a (y+ \cdots).
\end{aligned} 
\end{align}
\end{thm}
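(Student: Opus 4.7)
Given $h\in\diff(\mathbb{C}^{n},0)$ with $L=dh(0)$, the plan is to construct $\mathcal{F}\in\fol(n+1)$ whose holonomy is conjugate to $h$ via a suspension construction. Since $L$ is invertible it admits a matrix logarithm $B$ with $e^{2\pi iB}=L$, and replacing $B$ by $A:=B-N\cdot\mathrm{Id}$ for a large positive integer $N$ preserves $e^{2\pi iA}=L$ while forcing every eigenvalue of $A$ to have arbitrarily negative real part. After a linear change of coordinates on $\mathbb{C}^{n}$, I assume $dh(0)=e^{2\pi iA}$ exactly.

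With $\mathbb{H}=\{\re z<0\}$ as the universal cover of $\mathbb{D}^{*}$ via $z\mapsto e^{z}$, let $\mathbb{Z}$ act on $\mathbb{H}\times\mathbb{C}^{n}$ by $\sigma(z,y)=(z+2\pi i,h(y))$. The horizontal foliation with leaves $\mathbb{H}\times\{y_{0}\}$ is $\sigma$-permuted, so descends to a non-singular holomorphic foliation $\mathcal{G}$ on $M:=(\mathbb{H}\times\mathbb{C}^{n})/\mathbb{Z}$, transverse to the natural fibration $M\to\mathbb{D}^{*}$, whose holonomy around the puncture is $h$ by construction. The zero-section leaf $(\mathbb{H}\times\{0\})/\mathbb{Z}$ is biholomorphic to $\mathbb{D}^{*}$, and the task reduces to realizing a tubular neighborhood of this leaf in $M$ as a neighborhood of $\mathbb{D}^{*}\times\{0\}$ inside $(\mathbb{C}^{n+1},0)$, then extending across $\{x=0\}$.

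The technical core is the construction of a $\mathbb{Z}$-equivariant biholomorphism $\Psi$ between invariant neighborhoods of $\mathbb{H}\times\{0\}$ conjugating $\sigma$ to the linear action $\sigma_{\mathrm{lin}}(z,y)=(z+2\pi i,e^{2\pi iA}y)$. Once $\Psi$ is available, the elementary change of variables $(z,y)\mapsto(e^{z},e^{-Az}y)$ trivializes the quotient by $\sigma_{\mathrm{lin}}$ and transports $\mathcal{G}$ to a holomorphic foliation on $\mathbb{D}^{*}\times U$ (with $U\subset\mathbb{C}^{n}$ a neighborhood of $0$) close to the foliation of the linear system $x\partial_{x}+Ay\partial_{y}$. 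Heuristically $\Psi(z,y)=(z,\lim_{k\to\infty}e^{-2\pi ikA}\pi_{2}(\sigma^{k}(z,y)))$, and the freedom to take eigenvalues of $A$ very negative provides enough contraction of $\sigma$-orbits toward the zero section to make such a K\oo nigs-type limit converge on an invariant neighborhood.

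The defining vector field of the transported foliation then has bounded holomorphic coefficients on $\mathbb{D}^{*}\times U$, so Riemann's removable singularity theorem extends it to a holomorphic vector field on $\mathbb{D}\times U$ with an isolated zero at the origin and the prescribed linear part $x\partial_{x}+Ay\partial_{y}$; the resulting foliation $\mathcal{F}$ lies in $\fol(n+1)$ with $\hol(\mathcal{F})$ conjugate to $h$, proving $[\hol]$ is surjective. The principal obstacle is the uniform construction of $\Psi$: unlike classical K\oo nigs linearization at an isolated fixed point, the fixed set of $\sigma$ sits at the boundary at infinity $\re z=-\infty$, so one must produce a biholomorphism on a full tubular neighborhood of $\mathbb{H}\times\{0\}$ with estimates uniform as $\im z\to\pm\infty$; without these uniform bounds the final Riemann-extension step would only yield a sectorial foliation rather than one defined on a full disc around $x=0$.
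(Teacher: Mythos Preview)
Your suspension set-up is natural, but the ``K\"onigs-type limit'' at the heart of the argument does not do what you claim. Since $\sigma^{k}(z,y)=(z+2\pi i k,\,h^{k}(y))$, your proposed conjugacy is
\[
\Psi(z,y)=\Bigl(z,\ \lim_{k\to\infty} e^{-2\pi i kA}\,h^{k}(y)\Bigr)=\Bigl(z,\ \lim_{k\to\infty} L^{-k}\,h^{k}(y)\Bigr),
\]
which is precisely the classical K\"onigs--Poincar\'e linearization sequence for $h$ at the origin. It depends only on $L=e^{2\pi iA}$, not on the particular logarithm $A$. Shifting $A$ by $-N\cdot\mathrm{Id}$ changes nothing in this limit, so the alleged ``contraction from very negative eigenvalues of $A$'' is illusory: the $y$-dynamics of $\sigma$ is governed by $h$, whose linear part $L$ has all eigenvalues on the unit circle when $|b|=1$. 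In the Cremer and resonant cases the limit diverges. Worse, if such a $\Psi$ \emph{did} exist it would conjugate $h$ to $L$ (your $\Psi$ is of the form $(z,\psi(y))$ with $\psi$ independent of $z$ and $\psi\circ h=L\circ\psi$), so your scheme presupposes that $h$ is linearizable --- exactly the case one can handle trivially, and exactly the case Theorem~\ref{pema} is \emph{not} about. Finally, note that in Theorem~\ref{pema} the number $a<0$ is prescribed, so you are not free to push it to $-\infty$ anyway.

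The paper's route is entirely different and avoids any linearization of $h$. One first builds a polynomial model system whose holonomy agrees with $h$ to a prescribed finite jet (Proposition~\ref{pro:positive_construction}; here the absence of negative resonances --- automatic for $n=1$ with $a<0$ --- is what makes this possible). One then performs a surgery on this model foliation along a half-line in the $x$-plane, regluing by the discrepancy $\mathfrak{h}_{0}\circ\mathfrak{h}^{-1}$ so as to force the holonomy to equal $\mathfrak{h}$ exactly (Section~\ref{secmain}). The resulting object is a holomorphic foliation on an abstract manifold $\mathcal{M}$ fibering over $\mathbb{D}^{*}$; it is realized as a $C^{\infty}$ foliation on a punctured neighborhood of $0\in\mathbb{C}^{n+1}$ that is holomorphic outside a thin sector, and the high-order agreement of jets supplies enough flatness for the induced almost-complex structure to extend $C^{2}$ across $\{x=0\}$. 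A Newlander--Nirenberg type theorem (Nijenhuis--Woolf) then yields holomorphic coordinates, and a final Hartogs-type extension produces the desired vector field. The key point is that one never compares $h$ to its linear part globally; one only corrects a finite-jet defect by a smooth gluing and then upgrades regularity.
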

If $|b|\neq 1$, it is well known that a diffeomorphism of the form $h(z)=bz+O(z^2)$  is always linearizable, so  $h$ is obtained as holonomy of the linear system  
 \begin{align}\nonumber
\begin{aligned}
x'&= x\\
y'&= ay
\end{aligned} 
\end{align}
for any $a\in\mathbb{C}$ with $e^{2\pi i a}=b$. \\ 
In order to make a more precise statement of Theorem \ref{maintro}, let us introduce some definitions.  
We say that $A\in \gl(n,\mathbb{C})$ has {\it negative spectrum} if all eigenvalues of $A$ have negative real part. 
\begin{rem}
Consider a system of the form \eqref{sistema0} where $A$ has negative spectrum, $G(0)=0$ and $dG (0) =0$. 
Since the imaginary line divides 
the eigenvalues of the system in two groups, namely the singleton $\{1\}$ and the set of eigenvalues of $A$, there exists a holomorphic invariant manifold 
$S$, of dimension $1$, tangent to $\{y=0\}$ at the origin by Hadamard-Perron theorem (see \cite[Theorem 7.1]{Ilya-Yako}). 
Hence, we can assume that the system is in $\fol(n+1)$ up to a holomorphic change of coordinates. So, in the negative spectrum case, 
it suffices to work in $\fol(n+1)$ to study the analytic moduli.
\end{rem}
Let us remark that for $n=1$ and $A$ of negative spectrum, the map $[\hol]$ is a bijection between moduli spaces \cite{MaMo},  see \cite[Theorem 22.7]{Ilya-Yako}.
Thus, the analytic classification of foliations is reduced to an analogous problem for discrete maps.
It is not known in general whether $[\hol]$ is injective for $n \geq 2$ even if there are some partial results in that direction \cite{Eli-Ilya}, see also \cite{Reis}.
 \begin{defn}
 \label{defn:negative}
 Let $A$ be an $n \times n$ complex matrix.  We can assume that, up to a linear change of coordinates, its 
 semisimple part $A_s$ of the additive Jordan decomposition (cf. Definition \ref{def:jordan})   
 is of the form $A_s = \mathrm{diag} (\mu_1, \hdots, \mu_n)$. 
  Given $j= (j_1, \hdots, j_n) \in {\mathbb Z}_{\geq 0}^{n}$ and $1 \leq k \leq n$ we define
 \[ R_{j;k} =  R_{j_1 \hdots j_n; k}:= \mu_k  - j_1 \mu_{1} - \hdots - j_n \mu_{n} . \]
 Given $(j;k)$ such that $j \in {\mathbb Z}_{\geq 0}^{n}$, $|j|:=j_1 + \hdots +j_n \geq 2$ and
 $1 \leq k \leq n$, we say that it is a {\it positive resonance} (resp. negative, non-positive, non-negative) 
 of $e^{2 \pi i A}$ (with respect to $A$) 
 if $R_{j;k}$ is a positive (resp. negative, non-positive, non-negative)  integer number. 
% We say that $e^{2 \pi i A}$ does not have negative resonances (with respect to $A$) 
 %if for any $(j;k)$ such that $j \in {\mathbb Z}_{\geq 0}^{n}$, $j_1 + \hdots +j_n \geq 2$, 
 %$1 \leq k \leq n$ and  $R_{j;k} \in {\mathbb Z}$, we have $R_{j;k} \in {\mathbb Z}_{\geq 0}$.
 \end{defn}
% $\mathrm{spec} (A)   \subset \{ z \in {\mathbb C}\colon \re (z) < 0 \}$

 We prove the following theorem and its immediate corollary. 
\begin{thm}\label{maintro1}Consider  $h(y)= e^{2 \pi i A}  \cdot y+\textrm{h.o.t.}$ in $\diff(\mathbb{C}^n,0)$, where
$A\in \gl(n,\mathbb{C})$ has negative spectrum. 
Suppose that there exists a formal normal
 form $\tilde{h}$ of $h$ with respect to $A$ 
with no negative resonances (of $e^{2 \pi i A}$). Then, 
the diffeomorphism $h$ is --- up to holomorphic conjugation --- 
the holonomy of  a system of the form \eqref{sistema0}. 
\end{thm}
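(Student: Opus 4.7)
The plan is to work first in a formal normal form and then establish the analytic realization through a sectorial construction over the $x$-disk. By hypothesis, $h$ is formally conjugate to a normal form $\tilde h$ with no negative resonances of $e^{2\pi i A}$. The first move is to embed $\tilde h$ as the time-$2\pi i$ map of a formal vector field $\tilde X = 2\pi i A y + P(y)$. Writing $\exp(\tilde X) = \tilde h$ and equating degree by degree, the obstructions live in a cokernel indexed by integer values of $R_{j;k}$: negative integer resonances are exactly the ones that cannot be reabsorbed, so their absence allows the recursive construction to close up. The autonomous system $x' = x$, $y' = \tilde X(y)$ then defines a formal foliation $\tilde{\mathcal F}\in \fol(n+1)$ whose holonomy is $\tilde h$.

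To produce the analytic foliation $\mathcal F$ realizing $h$, I would not use the (possibly divergent) formal conjugacy $\hat\phi$ between $h$ and $\tilde h$ directly; instead I would build $\mathcal F$ sectorially over the $x$-variable. Cover a neighborhood of $x=0$ by angular sectors $V_j$ whose openings avoid the Stokes directions of $\tilde X$, and in each $V_j\times (\mathbb C^n,0)$ construct a holomorphic foliation $\mathcal F_j$ by a graph transform or fixed-point argument in a Banach space of holomorphic sections. The negative spectrum of $A$ places the $y$-direction in the Poincar\'e domain, furnishing the contraction estimates that make such an iteration converge; one arranges sector by sector that the return map around any loop contained in $V_j$ reproduces a sectorial branch of $h$ rather than $\tilde h$.

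The sectorial foliations $\mathcal F_j$ are then glued on the overlaps. Because each $\mathcal F_j$ has the same holonomy $h$ up to analytic conjugacy, the transition maps on $V_j \cap V_{j+1}$ commute with $h$ and fix $\{y=0\}$, which forces them to preserve the foliation; the $\mathcal F_j$ patch to a holomorphic foliation in a punctured neighborhood of the origin, extending across the singularity by Hadamard--Perron as in the remark above.

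The main obstacle is the sectorial realization itself. In dimension one this is the analytic content of Martinet--Ramis and P\'erez-Marco--Yoccoz, and in higher dimension it requires a careful choice of sectors adapted to the Stokes structure of $\tilde X$, together with quantitative estimates controlling the positive and zero integer resonances that remain in the normal form, so that the fixed-point argument in each sector produces a foliation with the prescribed $h$ as holonomy. Closing this step and verifying triviality of the Stokes cocycle on the overlaps will be the heart of the proof.
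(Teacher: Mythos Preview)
Your outline has genuine gaps at the analytic stage, and it also misidentifies where the hypothesis on resonances enters.

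\textbf{Role of the hypothesis.} In your first step, any $\tilde h$ with $d\tilde h(0)=e^{2\pi iA}$ can be written formally as $\exp(2\pi i\tilde X)$ for some formal $\tilde X$ with $j^1\tilde X=A$; the absence of negative resonances is not needed for this, so your account of ``obstructions in a cokernel indexed by integer values of $R_{j;k}$'' does not locate the hypothesis correctly. In the paper the hypothesis is used to build a \emph{convergent} (in fact polynomial) model in the variables $(x,y)$: a resonant monomial $y^j e_k$ of $\tilde h$ with $R_{j;k}=m\ge 0$ lifts to $x^{m} y^j\,\partial_{y_k}$, which is resonant for $x\partial_x+A_s y\,\partial_y$; a negative $m$ would require a negative power of $x$. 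This is the content of Proposition~\ref{pro:positive_construction}, which produces a polynomial vector field $Z$ whose holonomy agrees with $\tilde h$ to any prescribed finite order $\nu$.

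\textbf{The sectorial/gluing step.} You concede that the sectorial realization is the heart of the matter and leave it open, but the gluing you sketch is also not sound as stated: that each $\mathcal F_j$ has holonomy analytically conjugate to $h$ does not force the transition maps on overlaps to preserve the foliation. Commuting with $h$ is far weaker than sending leaves to leaves; making the $\mathcal F_j$ agree on overlaps is exactly the cocycle problem you defer, and in dimension $n\ge 2$ no such result is available in the literature. The final appeal to Hadamard--Perron is also misplaced: that theorem produces an invariant manifold, not an extension of a foliation across a singular fiber.

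The paper bypasses sectorial analysis entirely. With the polynomial model $Z$ in hand, it performs a surgery on the foliation of $Z$ over the punctured $x$-disk: slit along a ray and reglue by $\xi=\mathfrak h_0\circ\mathfrak h^{-1}$, which is tangent to the identity to order $\nu$. This yields an abstract complex manifold with a holomorphic foliation whose holonomy is exactly $\mathfrak h$. A hand-built $C^\infty$ diffeomorphism maps it into $\mathbb C^{n+1}\setminus\{x=0\}$; because $\nu$ was chosen large (so that $\xi-\mathrm{id}$ vanishes to high order), the induced almost complex structure extends in class $C^2$ across $\{x=0\}$, and the Nijenhuis--Woolf integrability theorem then supplies genuine holomorphic coordinates. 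The resulting holomorphic foliation is generated by a vector field of the form \eqref{sistema0} with the prescribed linear part and holonomy. Thus the resonance hypothesis is used only at the finite-jet model stage, and the analytic difficulty is handled by almost-complex geometry rather than by summability or Stokes data.
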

 \begin{cor}\label{maincor}
Consider  $h(y)= e^{2 \pi i A} \cdot y+\textrm{h.o.t.}$ in $\diff(\mathbb{C}^n,0)$, where
 $A\in \gl(n,\mathbb{C})$ has negative spectrum.
Suppose that $e^{2 \pi i A}$ has no negative resonances. 
Then, the diffeomorphism $h$ is --- up to holomorphic conjugation --- 
the holonomy of  a system of the form \eqref{sistema0}. 
\end{cor}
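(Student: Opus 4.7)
The plan is to deduce Corollary \ref{maincor} directly from Theorem \ref{maintro1} by observing that the hypothesis of the corollary is strictly stronger than the hypothesis of the theorem. The theorem requires the \emph{existence} of some formal normal form $\tilde h$ of $h$ with respect to $A$ whose resonant monomials avoid the negative-resonance indices, whereas the corollary assumes that $e^{2\pi i A}$ has \emph{no} negative resonances at all — a condition that depends only on $A$.

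First, I would invoke the standard Poincar\'e–Dulac theory for germs of diffeomorphisms to produce, for the given $h(y)=e^{2\pi i A}\cdot y+\textrm{h.o.t.}$, a formal conjugate $\tilde h$ of $h$ with respect to $A$ whose nonlinear part consists only of resonant monomials, i.e.\ monomials $y^j e_k$ with $|j|\geq 2$ and $R_{j;k}\in\mathbb{Z}$ (so that $e^{2\pi i\mu_k}=e^{2\pi i(j_1\mu_1+\cdots+j_n\mu_n)}$). The existence of such a formal normal form is classical and requires nothing beyond $A$ being put in its Jordan form.

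Second, I would read Definition \ref{defn:negative}: a negative resonance of $e^{2\pi i A}$ is by definition a triple $(j;k)$ with $|j|\geq 2$ and $R_{j;k}\in\mathbb{Z}_{<0}$; whether this triple indexes a nonzero coefficient of $\tilde h$ or not plays no role in the definition. Consequently, if $e^{2\pi i A}$ has no negative resonances, then there are simply no indices $(j;k)$ available for $\tilde h$ to carry a negative resonance. Thus, vacuously, any formal normal form $\tilde h$ of $h$ with respect to $A$ — and in particular the one produced in the previous step — has no negative resonances.

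Hence the hypothesis of Theorem \ref{maintro1} is satisfied and the conclusion transfers verbatim: $h$ is, up to holomorphic conjugation, the holonomy of a system of the form \eqref{sistema0}. There is no real obstacle here; the only care needed is to make explicit the logical direction ``no negative resonances of $e^{2\pi i A}$'' $\Rightarrow$ ``every formal normal form of $h$ has no negative resonances,'' which in turn depends on the fact that the notion of (negative) resonance is intrinsic to the pair $(A,(j;k))$ and not to the particular normal form $\tilde h$.
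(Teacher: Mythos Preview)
Your proposal is correct and matches the paper's intent: the authors state Corollary \ref{maincor} as an ``immediate corollary'' of Theorem \ref{maintro1} without giving a separate proof, and your argument spells out exactly that immediate deduction. The only cosmetic remark is that the paper obtains the formal normal form via the Jordan--Chevalley decomposition of subsection \ref{subsec:jordan} rather than via Poincar\'e--Dulac, but either route supplies the required $\tilde h$ and the logical step you emphasize (that ``no negative resonances of $e^{2\pi i A}$'' is a condition on $A$ alone, hence vacuously satisfied by any normal form) is precisely what makes the corollary immediate.
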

The concepts of formal normal form and negative resonances are introduced in Definitions \ref{def:normal} and \ref{def:resonance}.
In order to illustrate the situation,   assume 
that $A=A_s = \mathrm{diag} (\mu_1, \hdots, \mu_n)$ for the sake of simplicity, the general case is treated later on.
Let us remark that $\tilde{h}$  is in formal normal form with respect to $A$ if it has a Taylor power series expansion at the origin of the form
\[  \tilde{h} (y_1, \hdots, y_n) = 
 \left( \sum_{|j| \geq 1} a_{j;  1} \,  y^{j}, \hdots, \sum_{|j| \geq 1 } a_{j;  n} \,  
 y^{j} \right) , \]
   where any coefficient $a_{j;k}$ such that $R_{j;k} \not \in \mathbb{Z}$ vanishes. 
 The diffeomorphism $h$ has always a formal normal form $\tilde{h}$, i.e. $h$ is  conjugated to some $\tilde{h}$ in formal normal form
 by a formal diffeomorphism with identity linear part. Note that the formal normal form is not unique in general. 
  The condition of absence of negative resonances amounts to requiring that 
 whenever $a_{j;k} \neq 0$, we have $R_{j;k} \geq 0$.
 
 The scope of application of Theorem \ref{maintro1} is very broad. For instance,  fixed $A$, with negative spectrum, 
 there exist $\nu \in {\mathbb N}$ such that any negative resonance $(j,k)$ has degree $|j|   \leq \nu$.
 Therefore, we can apply Theorem \ref{maintro1} if $h$ is linearizable up to order $\nu$ and in particular if it is formally linearizable.

%This is the situation in dimension $1$ where negative spectrum implies no negative resonances. As a consequence, Theorem \ref{pema} is a direct consequence
%of Corollary \ref{maincor}.

Theorem \ref{pema}  is a direct consequence of
Corollary \ref{maincor} since in dimension $n=1$ negative spectrum implies no negative resonances. 
The necessity of the condition on negative resonances  will be explained in Section \ref{secexample}.

Let us show that Theorem \ref{maintro} is a direct consequence of Theorem \ref{maintro1}. To do so,  
it is enough to show that,
 given 
 $B\in\gl(n,\mathbb{C})$, there exists $A\in\gl(n,\mathbb{C})$ 
 such that $e^{2\pi i A}=B$ and $B$ has no negative resonances with respect to $A$.
 Firstly, we take any $n\times n$ complex matrix $A$ such that
$\nonumber e^{2\pi i A}=B$.
For each $m\in\mathbb{N}$, set 
$A_m\colon = A- mI,$ where $I$ is the identity matrix. Since $A$ and $m I$ commutes, we have 
\begin{align}\nonumber e^{2\pi i A_m}=e^{2\pi i A}e^{-2\pi i m I}=B. 
\end{align} Moreover, it is clear that $A_m$ has negative spectrum and
$B$ has no negative resonances with respect to $A_m$ if $m>>1$.

   This work is organized as follows. 
 In section \ref{resonances} we introduce some known properties of difeomorphisms and normal forms and study
 the concept of negative resonances.
 In section \ref{secsingular} we state and prove an extension property about
 singular  holomorphic foliations.  In section \ref{seccons} we solve the realization of the holonomy problem up to any 
 finite order by building model foliations for diffeomorphisms with normal forms without negative resonances. 
 In section \ref{secmain} we outline the proof of Theorem
 \ref{maintro1} and reduce it to the proof of the propositions \ref{smoothrealization}, \ref{jexten} and \ref{teonw}. In Section \ref{secspecial} we study some class of functions defined on angular sectors, we define a notion of order at the origin for these functions. Finally, 
 Proposition \ref{smoothrealization} is proved in  section \ref{secsmooth}, and Propositions  \ref{jexten} and \ref{teonw} in Section \ref{secextension}.
 We provide an application of our main result in section \ref{application}.

\section{Properties of local diffeomorphisms and resonances} \label{resonances} 
Let $\diff (\mathbb{C}^n,0)$ be the group of germs of biholomorphisms $f: U \to V$  with $f(0)=0$ where $U$ and $V$ are open neighborhoods of $0$
in ${\mathbb C}^{n}$. Its formal completion $\widehat{\diff} (\mathbb{C}^n,0)$, the so
called group of formal diffeomorphisms,  
consists of the Taylor power series expansions of the form 
 \[ f (y_1, \hdots, y_n) = 
 \left( \sum_{|j| \geq 1  } a_{j;  1} \,  y^{j}, \hdots, \sum_{|j| \geq 1  } a_{j;  n} \,  
 y^{j} \right)  \]
 whose linear part at the origin
 \[ df (0) (y_1, \hdots, y_n) = 
 \left( \sum_{|j| = 1  } a_{j;  1} \,  y^{j}, \hdots, \sum_{|j| = 1  } a_{j;  n} \,  
 y^{j} \right)  \]
 belongs to $\gl(n,\mathbb{C})$. For  any $k \in {\mathbb N}$, we have that
 \begin{itemize}
 \item the  $k$th-jet $j^{k} (f \circ g)$ depends on $j^{k} f$ and $j^{k} g$ for $f, g \in  \diff (\mathbb{C}^n,0)$ and
 \item the  $k$th-jet $j^{k} (f^{-1})$ depends on $j^{k} f$   for $f \in  \diff (\mathbb{C}^n,0)$.
 \end{itemize}
 As a consequence, the composition defines a group operation in $\widehat{\diff} (\mathbb{C}^n,0)$ that makes 
$\diff (\mathbb{C}^n,0)$ a subgroup of  $\widehat{\diff} (\mathbb{C}^n,0)$.
 
 \subsection{Jordan-Chevalley decomposition and normal forms}
 \label{subsec:jordan}
 Given $f\in\diff (\mathbb{C}^n,0)$, 
 it can be expressed uniquely in the form  $f=f_{s} \circ f_{u}$ where 
 \begin{itemize}
 \item $f_s, f_u \in \widehat{\diff} (\mathbb{C}^n,0)$;
 \item $f = f_s \circ f_u = f_u \circ f_s$;
 \item $f_s$ is formally conjugated to a diagonal linear map and 
 \item $f_u$ is unipotent, i.e. $\mathrm{spec} (df_u (0)) = \{1\}$. 
 \end{itemize}
  This is the analogue for local diffeomorphisms of 
 the multiplicative Jordan decomposition for linear operators
 \cite{MarJ}, see also \cite{JR:finite}.  
 We say that $f$ is in {\it formal normal form} if $f_s$ is a diagonal linear map. 
 We say that $\tilde{f} \in \widehat{\diff} (\mathbb{C}^n,0)$ is a formal normal form of $f$ if $\tilde{f}$ is in formal normal form and 
 $f$ is formally conjugated to $\tilde{f}$.
 
 \strut
 
 Fix $A\in\gl(n,\mathbb{C})$ such that $e^{2\pi i A} = d f (0)$. In the realization problem, we want to obtain $f$
 as the holonomy of a one dimensional vector field of the form \eqref{sistema0}, so we need to study 
 simultaneously the resonances of both the vector field and $f$. Therefore it is convenient to introduce 
 a concept of formal normal form for $f$ in which the role of $A$ is considered.
 \begin{defn}
 \label{def:jordan}
 Given an $n \times n$ complex matrix $M$, let 
 $M = M_s + M_N$ be the additive Jordan decomposition of $M$ as a sum of 
 linear operators that commute such that $M_s$ is diagonalizable and $M_N$ is nilpotent.
 \end{defn}
 \begin{defn}
 \label{def:normal}
 We say that $\tilde{f}$ is a {\it formal normal form} of $f$ with respect to $A$ if there exists $\psi \in \widehat{\diff} (\mathbb{C}^n,0)$
 such that 
 \begin{itemize}
 \item $\psi$ conjugates $f$ to $\tilde{f}$ and $d \psi (0)$ conjugates $A$ to $\tilde{A}$,
 \item $\tilde{A}_s$ is a diagonal matrix $\mathrm{diag} (\mu_1, \hdots, \mu_n) $ and
 \item  $\tilde{f}_s$ is of the form $\tilde{f}_{s} (y_1, \hdots, y_n) = (\lambda_1 y_1, \hdots, \lambda_n y_n)$ with $\lambda_j = e^{2 \pi i \mu_j}$.
 \end{itemize}
\end{defn}
  A formal normal form can be obtained  by considering a linear conjugacy that
 diagonalizes $A_s$ and then conjugating $f_s$ to $(\lambda_1 y_1, \hdots, \lambda_n y_n)$ by a tangent to the identity formal
 diffeomorphism.
 Notice that a formal normal form of $f$ with respect to $A$ is not necessarily unique.

  \subsection{Resonances}
  We can write $\tilde{f}$ in the form
 \[ \tilde{f} (y_1, \hdots, y_n) = \left( \sum_{|j| \geq 1} a_{j;  1} \,  y^{j}, \hdots, \sum_{|j| \geq 1 } a_{j;  n} \,  y^{j} \right) =  
 \sum_{|j| \geq 1, \ , 1 \leq k \leq n} a_{j;  k}  \, y^{j} \, e_k\]
 where $j$ runs on multi-indices $(j_1, \hdots, j_n) \in {\mathbb Z}_{\geq 0}^{n}$ with 
 $|j|:= j_1 + \hdots +j_n \geq 1$,  $y^{j} =y_{1}^{j_1} \hdots y_{n}^{j_n}$ and 
$e_k$ is the $k$th-element of the canonical basis of ${\mathbb C}^{n}$.
\begin{defn}
\label{def:resonance}
We say that $(j;k)$ is a resonance of $\tilde{f}$ if $a_{j;k} \neq 0$.
\end{defn}
Consider $\tilde{f}$ and $\tilde{A}$ as in Definition \ref{def:normal}. 
  If $a_{j_1 j_2 \hdots j_n;  k} \neq 0$, since $f_s\circ f_u=f_u\circ f_s$, we have that 
  $$(y_{1}^{j_1} \hdots y_{n}^{j_n} e_k)  \circ \tilde{f}_s \equiv \tilde{f}_s \circ( y_{1}^{j_1} \hdots y_{n}^{j_n} e_k),$$
 which is equivalent to the equality $ \lambda_k \lambda_{1}^{-j_1} \hdots \lambda_{n}^{-j_n} =1$. Then, since 
 $$ e^{2 \pi i  R_{j_1 \hdots j_n; k}}  =  e^{2 \pi i  (\mu_k  - j_1 \mu_{1} - \hdots - j_n \mu_{n} )} = 
   \lambda_k \lambda_{1}^{-j_1} \hdots \lambda_{n}^{-j_n} ,$$ $a_{j_1 j_2 \hdots j_n;  k} \neq 0$ implies 
  $ R_{j_1 \hdots j_n; k} \in {\mathbb Z}$.
  By construction, all resonances $(j;k)$ of $\tilde{f}$ with $|j|=1$ satisfy $R_{j;k}=0$.  
 The signs of the resonances $ R_{j; k}$ play a significant role in the realization Theorem \ref{maintro1}.
% \begin{defn}
%A resonance $(j;k)$ of $d \tilde{f} (0)$ with respect to $\tilde{A}$ is positive (resp, negative, non-positive, no-negative) if so is $R_{j;k}$.
 %\end{defn} 
 \begin{rem}
 \label{rem:jordan}
 If necessary, we can assume that given a formal normal form $\tilde{f}$ of $f$ with respect to $A$, the matrix 
 $\tilde{A}$ is in Jordan normal form, preserving the signs of resonances. More precisely, there is a  linear map 
 $L$ that conjugates $\tilde{A}$ to a matrix in Jordan normal form and commutes with $\tilde{A}_s$.   Notice that 
 \[ \left[  y_{1}^{j_1} \hdots y_{n}^{j_n} \frac{\partial}{\partial y_k} , \mu_1 y_1   \frac{\partial}{\partial y_1} + \hdots +  \mu_n y_n   \frac{\partial}{\partial y_n} 
 \right]  = R_{j;k}    y_{1}^{j_1} \hdots y_{n}^{j_n} \frac{\partial}{\partial y_k}  \]
 and denote  $W= L_{*} (y_{1}^{j_1} \hdots y_{n}^{j_n} \frac{\partial}{\partial y_k} )$. Then, since $L \tilde{A}_s = \tilde{A}_s L$, 
  \[ \left[ W , \mu_1 y_1   \frac{\partial}{\partial y_1} + \hdots +  \mu_n y_n   \frac{\partial}{\partial y_n} 
 \right]  = [W,\tilde{A}_s] =  R_{j;k}  W . \]
 Therefore, the resonances associated to the non-vanishing monomials of 
  \[  L \circ  y^{j} e_k \circ L^{-1}  \quad \mathrm{or} \quad L_{*} \left( y_{1}^{j_1} \hdots y_{n}^{j_n} \frac{\partial}{\partial y_k} \right) \]
 are all equal to $R_{j;k}$.  Thus, 
 if $(j;k)$ is a positive resonance (resp, negative, non-positive, non-negative) 
 then all non-vanishing monomials of  $ L \circ  y^{j} e_k \circ L^{-1}$
 are positive (resp. negative, non-positive, non-negative). In particular, $\tilde{f}$ has negative resonances if and only if 
 $ L \circ  \tilde{f} \circ L^{-1}$ does.
  \end{rem}

\subsection{The exponential map}
  The presentation in this subsection is classical and it is included for the sake of completeness. More details can be found in \cite{JR:finite}.
  We say that $X$ is a formal vector field  in $(\mathbb{C}^n,0)$ if it is a derivation of the ring ${\mathbb C}[[y_1, \hdots, y_n]]$
  that preserves its maximal ideal. It can be expressed in the form
  \[ b_1 (y_1,\hdots, y_n) \frac{\partial}{\partial y_1} + \hdots +  b_n (y_1,\hdots, y_n) \frac{\partial}{\partial y_n} \]
  where $b_j = X(y_j)$ for $1 \leq j \leq n$. 
  We say that $X$ is nilpotent if its first jet 
  $j^{1} X = \sum_{j=1}^{n} j^{1} b_j  \; \partial/\partial_{j}$ is a nilpotent linear operator.
  We denote by $X^{k} (f)$ the result of applying $k$ times the operator $X$ to $f$.

  We can define the exponential $\mathrm{exp} (X)$ of a formal vector field. 
  Given an analytic vector field $X$, i.e. a formal vector field such that $X$ preserves  
  ${\mathbb C}\{y_1, \hdots, y_n\}$,  we consider its flow $\phi (t,y)= (\phi_1 (t,y), \hdots, \phi_n (t,y))$. It satisfies  
   
  \[ \frac{\partial^{k} \phi_j (t,y)}{\partial t^{k}} (0,y) =  X^{k} (y_j)  (y)  \]
in a neighborhood of $0$ in ${\mathbb C}^{n}$ 
for all $1 \le j \le n$ and $k \geq 1$. The result is trivial if $y$ is a singular point of $X$ and derives of Taylor's formula if $y$ is regular
since every regular vector field is locally equivalent to a coordinate vector field. Thus, we get 
\[ \phi_j (t,y) = y_j + \sum_{k=1}^{\infty} t^{k} \frac{ X^{k} (y_j)}{k!}  \]
for any $1 \leq j \leq n$. By definition, the exponential $\mathrm{exp}(X)$ of $X$ is the time $1$ map $\phi(1,y)$. 

The definition of $\mathrm{exp}$ generalizes to formal vector fields. Indeed given a formal vector field $X$ in $(\mathbb{C}^n,0)$, 
its exponential $\mathrm{exp} (X)$ is the formal diffeomorphism 
  $f = (f_1, \hdots, f_n) \in \widehat{\diff} (\mathbb{C}^n,0)$ such that
  \begin{equation}
  \label{taylor}
   f_j = y_j + \sum_{k=1}^{\infty} \frac{X^{k} (y_j)}{k!}  
  \end{equation}
  for any $1 \le j \le n$. The next result is classical. 
  \begin{pro}[cf, \cite{MaRa, Ecalle}  {\cite[Th. 3.17]{Ilya-Yako}} ] % \cite[Th. 3.17]{Ilya-Yako}
  \label{pro:exp}
  The exponential map establishes a bijection between formal nilpotent vector fields and formal unipotent diffeomorphisms. 
  \end{pro}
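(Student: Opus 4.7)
The plan is to construct an explicit inverse to $\exp$ via a formal logarithm and verify everything at the level of continuous operators on the algebra $\widehat{\mathcal O} := \mathbb{C}[[y_1,\hdots,y_n]]$. The guiding principle is that a formal vector field $X$ acts as a continuous derivation $D_X$ of $\widehat{\mathcal O}$ preserving $\mathfrak{m}$, a formal diffeomorphism $f$ acts as a continuous algebra automorphism $T_f(g):= g\circ f$ preserving $\mathfrak{m}$, and the Taylor formula \eqref{taylor} is equivalent to the operator identity $T_{\exp(X)} = \exp(D_X)$.

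First I would verify that $\exp$ is well-defined. Writing $X = N + Y$ with $N = j^{1} X$ nilpotent of index $m$ and $Y$ of order $\geq 2$, I would expand $X^{k} = (N+Y)^{k}$ as a sum of $2^{k}$ words in $N$ and $Y$. Any word containing $r$ occurrences of $Y$ sends $y_j$ into $\mathfrak{m}^{r+1}$, while a word in $N$'s alone vanishes as soon as its length exceeds $m-1$. A straightforward counting argument then shows that for each $d$ there exists $K$ with $X^{k}(y_j) \in \mathfrak{m}^{d}$ for every $k \geq K$, so the series \eqref{taylor} converges in the $\mathfrak{m}$-adic topology and defines a formal diffeomorphism. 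Its linear part is the truncated series $I + N + N^{2}/2! + \hdots + N^{m-1}/(m-1)!$, which is unipotent; hence $\exp(X)$ is a formal unipotent diffeomorphism.

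Next I would construct the inverse. Given a unipotent $f$, the crucial step is to show that $T_f - \id$ eventually raises orders. Since $L := j^{1}f - \id$ is nilpotent, say $L^{r}=0$, an induction on $s$ yields that $(T_f - \id)^{rs}$ sends $\mathfrak{m}$ into $\mathfrak{m}^{s+1}$: the base case exploits that the linear contribution of $(T_f - \id)^{r}$ factors through $L^{r}=0$, and the induction step uses the multiplicativity of $T_f$ on a product in $\mathfrak{m}^{s}\cdot \mathfrak{m}$ to gain one order. It follows that
\[ D := \sum_{k\geq 1} \frac{(-1)^{k+1}}{k}\,(T_f - \id)^{k} \]
converges to a continuous $\mathbb{C}$-linear operator on $\widehat{\mathcal O}$. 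Because $T_f$ is an algebra automorphism, the classical formal identity $\log(T_1 T_2) = \log T_1 + \log T_2$ applied to $T_1 = T_f\otimes \id$ and $T_2 = \id \otimes T_f$ (acting on the completed tensor product modelling the product rule) forces $D$ to satisfy the Leibniz rule. Hence $D = D_X$ for a unique formal vector field $X$, whose first jet $\log(j^{1}f)$ is nilpotent.

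The identities $\exp\circ\log = \id$ and $\log\circ\exp = \id$ then follow from the formal power-series identities $\exp(\log T) = T$ and $\log(\exp D) = D$, valid whenever the series converge, applied to $T = T_f$ and $D = D_X$. The main obstacle is the formal convergence of all these series in the $\mathfrak{m}$-adic topology: once the two order-growth estimates above (for words in $N$, $Y$ and for iterates of $T_f - \id$) are secured, the rest reduces to the formal manipulations familiar from the Jordan--Chevalley setting for linear operators.
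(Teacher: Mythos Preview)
The paper does not prove this proposition: it is stated as a classical result with references to Martinet--Ramis, \'Ecalle, and Ilyashenko--Yakovenko, and the subsequent Remark~\ref{rem:log} merely records the same logarithm formula you use. So your proposal goes well beyond what the paper does, and the overall strategy --- define $\log f$ via $\sum_{k\geq 1}(-1)^{k+1}\Theta^{k}/k$, check it is a derivation, and verify $\exp\circ\log=\log\circ\exp=\id$ as operator identities --- is the standard and correct route.

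There is, however, a genuine quantitative error in your convergence argument for the logarithm. You claim that $(T_f-\id)^{rs}$ sends $\mathfrak{m}$ into $\mathfrak{m}^{s+1}$, where $L^{r}=0$. This is false. Take $n=2$ and $f(y_1,y_2)=(y_1+y_2,\,y_2+y_1^{2})$, so $L=\bigl(\begin{smallmatrix}0&1\\0&0\end{smallmatrix}\bigr)$ and $r=2$. A direct computation gives $\Theta^{4}(y_1)=2y_2^{2}+2y_1^{3}+4y_1^{2}y_2+y_1^{4}$, which lies in $\mathfrak{m}^{2}$ but not in $\mathfrak{m}^{3}$; your claim with $s=2$ fails. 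The flaw in the induction is that passing from $\mathfrak{m}^{s+1}$ to $\mathfrak{m}^{s+2}$ via the skew--Leibniz rule $\Theta(gh)=\Theta(g)\,h+T_f(g)\,\Theta(h)$ produces cross terms $\Theta^{k}(\cdots)\cdot\Theta^{r-k}(\cdots)$ with $0<k<r$ that need not gain an order. The underlying reason is that on $\mathfrak{m}^{d}/\mathfrak{m}^{d+1}$ the operator $T_f$ acts as the $d$th symmetric power of a unipotent map, whose nilpotent part has index growing with $d$ (roughly $d(r-1)+1$), not bounded by $r$.

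The fix is easy and does not affect the rest of your argument: simply observe that $T_f$ induces a unipotent automorphism of each finite-dimensional quotient $\mathfrak{m}^{d}/\mathfrak{m}^{d+1}$, hence $\Theta$ is nilpotent there with some index $N_d$, and therefore $\Theta^{N_1+\cdots+N_d}(\mathfrak{m})\subset\mathfrak{m}^{d+1}$. This is all you need for $\mathfrak{m}$-adic convergence of the log series. With this correction, your tensor-product argument for the Leibniz rule and the remaining formal identities go through as written.
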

 \begin{rem}
 \label{rem:log}
 Given a formal unipotent diffeomorphism $f$, the unique formal nilpotent vector field whose exponential is $f$ is called its {\it infinitesimal generator}. 
 There is a formula to calculate the infinitesimal generator $\log f$ of $f$. Let $\Theta$ be the linear operator defined in ${\mathbb C}[[y_1, \hdots, y_n]]$
 by $\Theta (h) = h \circ f - h$. We have
 \begin{equation}
 \label{equ:log}
  (\log f)(y_j) = \sum_{k=1}^{\infty} (-1)^{k+1} \frac{\Theta^{k} (y_j)}{k}  
  \end{equation}
 for any $1 \leq j \leq n$.
 This formula is obtained by noticing that the operator $h \mapsto h \circ f$ defined by $f$ in ${\mathbb C}[[y_1, \hdots, y_n]]$ is of the form $\mathrm{id} + \Theta$
 and considering the development of $\log (1 + z)$. 
  \end{rem}
   
\section{Singular holomorphic foliations}\label{secsingular}

 Let $M$ be a complex manifold of dimension $m\ge2$. A singular holomorphic foliation by curves 
 $\mathcal{F}$ on 
 $M$ can be defined by a collection $\{(U_i,Z_i)\}_{i\in I}$, where  $\{U_i\}_{i\in I}$ is an open covering of $M$ and each  $Z_i$ is a holomorphic vector field on $U_i$ with singular set of codimension $\ge 2$,   such that on each $U_i\cap U_j$  the vector fields $Z_i$ and $Z_j$ coincides up to multiplication by a nowhere vanishing holomorphic function. A point $p\in M$ is a singularity of $\mathcal{F}$ if it is a singularity  for some  $Z_i$. Thus, the set $\sing (\mathcal{F})$ of the singularities of $\mathcal{F}$  is an analytic variety of codimension $\ge 2$. The restriction of $\mathcal{F}$ to $M\backslash \sing(\mathcal{F})$ is a holomorphic foliation of complex dimension one,  in the classical sense. Conversely,  if $V\subset M$ is an analytic variety of codimension
  $\ge 2$ and $\mathcal{F}$ is a  holomorphic foliation of complex dimension one on $M\backslash V$, it
  is well known that $\mathcal{F}$ extends to $M$ as a singular  holomorphic foliation by curves. 
  If $V\subset M$ is any proper subvariety, we have the following extension criteria.
 
\begin{pro}\label{extefol} Let $M$ be a complex manifold of dimension $m\ge2$, let $V\subset M$ be a proper subvariety, and  
let  $\mathcal{F}$ be a holomorphic foliation of complex dimension one on $M\backslash V$. Suppose that $\mathcal{F}$ extends to $M$ as a $C^1$ foliation $\bar{\mathcal{F}}$. Then $\bar{\mathcal{F}}$ is holomorphic. 
\end{pro}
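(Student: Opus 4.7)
\emph{Proof plan.} The statement is purely local, so fix $p\in V$ and let us show that $\bar{\mathcal{F}}$ is holomorphic on a neighborhood of $p$. Because $\bar{\mathcal{F}}$ is a nonsingular $C^1$ foliation whose leaves are complex curves, its tangent distribution is a $C^1$ complex line subbundle of $TM$; hence in some neighborhood of $p$ it is generated by a nonvanishing $C^1$ vector field $X_0$. Choose holomorphic coordinates $(z_1,\dots,z_m)$ centered at $p$ such that $X_0(p)$ is a nonzero multiple of $\partial/\partial z_1|_{p}$. By continuity, on a sufficiently small polydisc $U$ centered at $p$ the first component of $X_0$ is nowhere zero, so dividing by it yields a $C^1$ vector field on $U$ of the form
\[
X \;=\; \frac{\partial}{\partial z_1} \;+\; \sum_{i=2}^{m} f_i\,\frac{\partial}{\partial z_i},
\]
tangent to $\bar{\mathcal{F}}$, with $f_2,\dots,f_m\in C^1(U)$ and $f_i(p)=0$.

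Next I would argue that each coefficient $f_i$ is holomorphic on $U\setminus V$. Fix $q\in U\setminus V$ and pick a local holomorphic nonvanishing generator $Z=\sum_{j=1}^{m} h_j\,\partial/\partial z_j$ of $\mathcal{F}$ near $q$. Since $Z$ and $X$ span the same complex line at each point near $q$ and the first component of $X$ is identically $1$, we must have $Z=h_1X$ near $q$. In particular $h_1$ is nonvanishing there, and $f_i=h_i/h_1$ is holomorphic near $q$. Thus each $f_i$ is continuous on $U$ and holomorphic on the complement of the proper analytic subvariety $V\cap U$. By the Riemann extension theorem, a continuous function holomorphic off a proper analytic subvariety is holomorphic on the whole open set (for codimension one this uses the continuity, and for codimension at least two it already follows from local boundedness). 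Hence $f_2,\dots,f_m$ are holomorphic on $U$, the vector field $X$ is a holomorphic generator of $\bar{\mathcal{F}}$ on $U$, and $\bar{\mathcal{F}}$ is a holomorphic foliation near $p$. Since $p\in V$ was arbitrary, this proves the proposition.

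The argument is really just a normalization followed by Riemann extension, so the only point requiring any care is the choice of coordinates in which $T_p\bar{\mathcal{F}}$ is transverse to the hyperplane $\{dz_1=0\}$; it is the $C^1$ regularity of the tangent line field of $\bar{\mathcal{F}}$ (rather than mere continuity) that guarantees the first component of $X_0$ stays nonzero on a full neighborhood of $p$, and hence that the normalized field $X$ has $C^1$ coefficients on $U$. Once this is in place, the extension step is the standard Riemann removable singularity statement and there is no further obstacle.
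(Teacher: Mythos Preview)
Your argument is correct and is essentially the paper's own proof: normalize the tangent line field so that its first projective coordinate is $1$, observe that the remaining coordinates are continuous on $U$ and holomorphic on $U\setminus V$, and apply Riemann extension. Two small points: you assert that the tangent distribution is a \emph{complex} line subbundle, but this is not given at points of $V$ and is exactly what the paper checks first (the set of complex lines is closed in the real Grassmannian of $2$-planes, so continuity plus density of $U\setminus V$ does it); and your closing remark that $C^1$ regularity is what keeps the first component of $X_0$ nonzero is mistaken---mere continuity suffices for that, and in fact the whole proof (including the paper's) only uses continuity of the tangent field, since Riemann extension needs nothing more.
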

\proof
  Let $p\in V$ be arbitrary. 
 If $U$ is a small neighborhood of $p$ in $M$, we can directly assume $p\in U\subset \mathbb{C}^m$.
 Given $x\in U$, the tangent line $T_x\bar{\mathcal{F}}$ of $\bar{\mathcal{F}}$ at $x$ is a  
 real bidimensional subspace of $\mathbb{C}^m$. Since $T_x\bar{\mathcal{F}}$ depends continuously
 on $x\in U$  and $T_x\bar{\mathcal{F}}$ 
 is a complex line whenever $x\in U\backslash V$, we deduce that $T_x\bar{\mathcal{F}}$ is a complex line for all $x\in U$. Thus,  we can suppose
 $$T_p\bar{\mathcal{F}}=[1:0:\cdots : 0]\in \mathbb{P}^{m-1}$$ and, by reducing $U$ if necessary,
  we have  $T_x\bar{\mathcal{F}}$, $x\in U$ so close to $[1:0:\cdots : 0]$ such that we can write 
 $$T_x\bar{\mathcal{F}}=[1:\theta_1(x):\cdots :\theta_{m-1}(x)],$$
where $$\theta_1,\dots,\theta_{m-1}\colon U\to\mathbb{C}$$ are continuous.
Thus, since the functions $\theta_j$ need to be holomorphic on $U\backslash V$ 
--- because $\bar{\mathcal{F}}|_{U\backslash V}$ is holomorphic --- we have that
 $\theta_1,\dots,\theta_{m-1}$ are indeed holomorphic on $U$. Therefore 
 $\bar{\mathcal{F}}|_{U}$ is generated by the holomorphic vector field 
 $$x\mapsto (1,\theta_1(x),\dots ,\theta_{m-1}(x)),$$
 so $\bar{\mathcal{F}}$ is holomorphic and
 regular around $p\in V$. 
\qed

We are mainly interested in foliations defined by holomorphic vector fields around an isolated singularity,
so we consider a singular holomorphic foliation $\mathcal{F}$ defined by a vector field  $Z$ on a neighborhood $U$ of $0\in\mathbb{C}^m$ with a unique singularity at the origin. An analytic curve $S\subset U$ through the origin is said to be a separatrix of $\mathcal{F}$
if $S$ is tangent to $Z$. In this case $S\backslash\{0\}$ is contained in a leaf of the regular foliation
given by the restriction of $\mathcal{F}$ to
$U\backslash\{0\}$. Thus, by definition, the holonomy of $S$ is the holonomy map associated to any  positively oriented simple loop around the origin in $S\backslash\{0\}$. Up to holomorphic conjugation, this holonomy map is a germ in $\diff (\mathbb{C}^{m-1},0)$.

\section{Negative resonances as an obstruction to realization}\label{secexample}

In this section, we present an example $h(y)=e^{2 \pi i A} \cdot y+\textrm{h.o.t.}$ in $\diff(\mathbb{C}^2,0)$
such that $h$ is not the holonomy map of any system of the form (\ref{sistema0}). This phenomenon will be due to the presence
of negative resonances of $dh(0)$.

We take
\[ A = \left( 
\begin{array}{rr}
-\frac{3}{2} & 0 \\
0 & - \frac{1}{4}
\end{array}
\right) \]
and define $h \in \diff(\mathbb{C}^2,0)$ by 
\[  h (y_1, y_2) = (-y_1 + y_{2}^{2}, -i y_2) = (-y_1, -i y_2) \circ (y_1 - y_{2}^{2},  y_2)   . \]
We have $e^{2 \pi i A} = d h(0)$. Notice that $ (-y_1, -i y_2) $ and $ (y_1 - y_{2}^{2},  y_2)$ commute and then 
$h_s = (-y_1, -i y_2)$ and $h_u = (y_1 - y_{2}^{2},  y_2)$. In particular, $h$ is in formal normal form with respect to $A$.
%and consider $h \in \diff(\mathbb{C}^2,0)$ such that its $2$-jet satisfies
%\[ j^2 h (y_1, y_2) = (-y_1 + y_{2}^{2}, -i y_2) = (-y_1, -i y_2) \circ (y_1 - y_{2}^{2},  y_2)   . \]
%In particular, we have $e^{2 \pi i A} = d h(0)$. Moreover, $h$ is in normal form with respect to $A$.
%Notice that $ (-y_1, -i y_2) $ and $ (y_1 - y_{2}^{2},  y_2)$ commute and then 
%$j^2 h_s = (-y_1, -i y_2)$ and $j^{2} h_u = (y_1 - y_{2}^{2},  y_2)$.
\begin{pro}
\label{pro:example}
There is no system of the form (\ref{sistema0}) whose holonomy is   is holomorphically conjugated to $h$.
\end{pro}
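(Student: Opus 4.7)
The strategy is to reach a contradiction by comparing, via a formal Poincar\'{e}-Dulac normalization, the orders of vanishing at the origin of two formal vector fields: the infinitesimal generator $\log h_u$ and the corresponding object for the formal holonomy of any hypothetical realization of $h$. Suppose, toward contradiction, that $h$ is analytically conjugate to the holonomy of some $X=x\partial_x+Ay\partial_y+G(x,y)\partial_y$ in $\fol(3)$ with the prescribed $A=\mathrm{diag}(-3/2,-1/4)$; replacing $X$ by its conjugate, we may assume $h$ itself equals this holonomy.

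First I would apply a formal Poincar\'{e}-Dulac normalization to $X$ that preserves the shape of system \eqref{sistema0}. Because $G(x,0)=0$ and the $x$-component of $X$ consists of only $x\partial_x$, the cohomological equations at each order can be solved using generators of the form $x^{m_0}y^me_k$ with $k\in\{1,2\}$ and $|m|\geq 1$; the resulting formal diffeomorphism $\psi\in\widehat{\diff}(\mathbb{C}^3,0)$ has the shape $\psi(x,y)=(x,\widehat{\psi}(x,y))$ with $\widehat{\psi}(x,0)=0$ (so $\psi$ fixes the separatrix $\{y=0\}$ pointwise) and brings $X$ to the formal normal form $\widetilde{X}=x\partial_x+Ay\partial_y+\widetilde{G}(x,y)\partial_y$, whose $y$-component $\widetilde{G}$ contains only the resonant monomials $x^{m_0}y^{m}e_k$ with $m_0=R_{m;k}\in\mathbb{Z}_{\geq 0}$ and $|m|\geq 1$.

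Next I would compute the formal holonomy $\widetilde{h}$ of $\widetilde{X}$ via the substitution $x=e^{t}$, $y_j=e^{\mu_j t}z_j$, which converts $\widetilde{X}$ into an autonomous formal system $\dot z=\widetilde{R}(z)$ with $\widetilde{R}_k(z)=\sum_m\widetilde{a}_{m;k}z^m$ summed over multi-indices $m$ such that $R_{m;k}\in\mathbb{Z}_{\geq 0}$. This yields $\widetilde{h}=e^{2\pi iA}\circ\exp(2\pi i\widetilde{R})$; since $\widetilde{R}$ has zero linear part (for this $A$ no $(j,k)$ satisfies $\mu_k-\mu_j\in\mathbb{Z}_{\geq 1}$) and commutes with $A_s=A$, this decomposition is exactly the Jordan-Chevalley decomposition of $\widetilde{h}$, so $\log\widetilde{h}_u=2\pi i\widetilde{R}$. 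A direct check shows that for $A=\mathrm{diag}(-3/2,-1/4)$ no multi-index $m$ with $|m|=2$ satisfies $R_{m;k}\in\mathbb{Z}_{\geq 0}$ for either $k$, hence $\widetilde{R}$ vanishes to order at least $3$ at the origin.

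Because $\psi$ preserves the separatrix pointwise, the standard formal invariance of the holonomy under formal conjugation of foliations implies that $h$ and $\widetilde{h}$ are formally conjugate in $\widehat{\diff}(\mathbb{C}^2,0)$; by uniqueness of the Jordan-Chevalley decomposition their unipotent parts are formally conjugate as well, and so $\log h_u$ and $\log\widetilde{h}_u$ are formally conjugate as formal vector fields. But from the explicit expression $h_u=(y_1-y_2^2,\,y_2)$, the formula of Remark \ref{rem:log} gives $\log h_u=-y_2^2\partial_{y_1}$, which has order $2$ at the origin, whereas $\log\widetilde{h}_u=2\pi i\widetilde{R}$ has order at least $3$. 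Since the order of vanishing at the origin is invariant under formal conjugation of formal vector fields, we have reached a contradiction. The main obstacle I anticipate is rigorously establishing the formal invariance of the holonomy under $\psi$; I would handle it by a jet-by-jet truncation argument, replacing $\psi$ by polynomial truncations that conjugate $X$ convergently modulo arbitrarily high order and passing to a formal limit of the resulting analytic conjugacies of holonomies.
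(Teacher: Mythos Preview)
Your approach is correct and rests on the same core observation as the paper's proof: for $A=\mathrm{diag}(-3/2,-1/4)$ the vector field $X$ admits no resonant monomials $x^{m_0}y^{m}\partial_{y_k}$ with $|m|=2$ and $m_0\geq 0$, so after Poincar\'e--Dulac normalization the holonomy is forced to be linear to order~$2$, contradicting $j^{2}h_u\neq\mathrm{id}$. The execution, however, differs. The paper performs only a two-step \emph{analytic} normalization---a convergent gauge transformation to remove the $x$-dependent linear-in-$y$ terms (possible because $\mu_1-\mu_2\notin\mathbb{Z}$), then a single $\exp(W)$ with $W$ homogeneous of degree~$2$ in~$y$ to remove the quadratic terms---and then simply observes that the holonomy of a system of the form $X_1+O(y^3)$ has $2$-jet $(-y_1,-iy_2)$. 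This sidesteps your anticipated obstacle entirely: no formal conjugacies, no formal invariance of holonomy, no truncation limit. Your route through the full formal normal form, the explicit factorization $\tilde h=e^{2\pi iA}\circ\exp(2\pi i\tilde R)$, and the comparison of vanishing orders of $\log h_u$ and $\log\tilde h_u$ is valid and more structural (it makes transparent why only non-negative resonances of $e^{2\pi iA}$ can survive in the holonomy of a system \eqref{sistema0}), but for this specific example the machinery is heavier than needed; the paper's finite-order argument already delivers the contradiction in one page.
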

\begin{proof}
Suppose, aiming at a contradiction, that such a system exists and is given by the vector field
\[ X = x \frac{\partial}{\partial x} + \sum_{i +j \geq 1, \, k \geq0} a_{k ij}  x^{k} y_1^{i} y_2^{j}  \frac{\partial}{\partial y_1} + 
\sum_{i +j \geq 1, \, k \geq 0}  b_{k ij} x^{k} y_1^{i} y_2^{j}  \frac{\partial}{\partial y_2} . \]
Consider the fuchsian equation associated to the vector field
 \[ x \frac{\partial}{\partial x} + \sum_{i +j = 1, \, k \geq0} a_{k ij}  x^{k} y_1^{i} y_2^{j}  \frac{\partial}{\partial y_1} + 
\sum_{i +j = 1, \, k \geq 0}  b_{k ij} x^{k} y_1^{i} y_2^{j}  \frac{\partial}{\partial y_2} . \]
 Since the difference of its eigenvalues $-3/2 - (-1/4)$ is not an integer number,  it is analytically conjugated by a gauge transformation 
 $g \in \diff(\mathbb{C}^3,0)$, i.e. $x \circ g \equiv x$ and $g(x,0,0) \equiv (x,0,0)$,  to 
 \[ X_{1} := x \frac{\partial}{\partial x} - \frac{3}{2} y_1 \frac{\partial}{\partial y_1}  - \frac{1}{4} y_2 \frac{\partial}{\partial y_2} . \] 
 In particular, we can suppose that all coefficients of $X-X_1$ belong to the ideal 
 $(y_1,\hdots, y_n)^{2}$. 
 Next, we want to remove the terms of degree $2$ in $y$ by considering a change of coordinates of the form $\mathrm{exp} (W)$  with 
\[  W =  \sum_{k \geq 0, \, i +j =2} \alpha_{kij} x^{k} y_1^{i} y_2^{j}  \frac{\partial}{\partial y_1} + \sum_{k \geq 0, \, i +j =2}  \beta_{kij} x^{k} y_1^{i} y_2^{j}  \frac{\partial}{\partial y_2} . \]
Indeed, we have
\[ \mathrm{exp}(W)^{*} X - X_1 = (X - X_1)  + [W,X] + \frac{1}{2!} [W, [W,X]] + \hdots  =  \]
\[
  \sum_{i +j =2} (a_{kij}+ \alpha_{kij} R_{kij;2}) x^{k} y_1^{i} y_2^{j}  \frac{\partial}{\partial y_1} 
  + \sum_{i +j =2}  (b_{kij}+ \beta_{kij} R_{kij;3})  x^{k} y_1^{i} y_2^{j}  \frac{\partial}{\partial y_2}  + O(y^3),
\] 
where $\mu_1 =1$, $\mu_2=-3/2$ and $\mu_3=-1/4$ and $R_{kij;2}$, $R_{kij;3}$  are as introduced in Definition \ref{defn:negative}.
We just need to solve the equations 
\[ a_{kij}+ \alpha_{kij} R_{kij;2} = 0 \quad \mathrm{and} \quad b_{kij}+ \beta_{kij} R_{kij;3} = 0 \]
for all $k \geq 0$ and $i+j=2$. They have a unique solution since it can be seen directly that 
$R_{kij;2} \neq 0$ and $R_{kij;3} \neq 0$ for all $k \geq 0$ and $i+j=2$. 
Since $|R_{kij;2}|$ and $|R_{kij;3}|$  are bounded away from zero,  $W$ is an analytic vector field.

Summarizing, up to a change of coordinates we can assume that $X$ is of the form $X_1 + O(y^3)$.
The holonomy of such a system is of the form 
\[  (-y_1, -i y_2)  + O(y^3) \]
and therefore $h$ is analytically conjugated to a diffeomorphism whose second jet is equal to $(-y_1, -i y_2)$. 
We deduce that $j^{2} h_u \equiv \mathrm{id}$ but this contradicts $j^{2} h_u (y_1, y_2) = (y_1 - y_{2}^{2},  y_2) $.
%We obtain a contradiction since the resonant monomial $y_{2}^{2} e_1$ of $h$ can not be eliminated by changes of coordinates.
\end{proof}
\begin{rem}
The eigenvalues $\lambda_1 = -1$ and $\lambda_2 = -i$ have exactly one resonance of degree $2$, corresponding to the monomial
$y_{2}^{2} e_1$. Our choice of $A$, and hence of $\mu_1=-3/2$ and $\mu_2 = -1/4$, makes this resonance negative since 
$\mu_1 - 2 \mu_2 = -1$. So the absence of negative resonances is needed in Theorem  \ref{maintro1}. 
% example makes clear that the presence of negative resonances is an obstruction for the realization of $h$ as a holonomy map.
\end{rem}
\begin{rem}
Positive resonances of $df(0)$ can be ``shared'' by vector fields of the form  (\ref{sistema0}).
Indeed, if $R_{j;k} = m \in {\mathbb Z}_{\geq 0}$, the positive resonance $y^{j} e_k$ is associated to $x^{m} y^{j} \frac{\partial}{\partial y_k}$, 
which is a resonant monomial for a vector field of the form \eqref{sistema0} since 
\[ \left[ x^{m} y^{j} \frac{\partial}{\partial y_k}, x \frac{\partial}{\partial x} + \sum_{l=1}^{n} 
\mu_l y_l \frac{\partial}{\partial y_l}  \right] =
 (R_{j;k} -m) x^{m} y^{j} \frac{\partial}{\partial y_k} = 0. \] 
This property will be fundamental to prove
Theorem \ref{maintro1}. Its absence is the obstruction to realize holonomies in the negative spectrum case.
For instance, in the example of Proposition \ref{pro:example}, since there are no positive resonances of degrees $1$ or $2$ in $y$, we can linearize 
the system up to order $2$ in the $y$ variables but since this property is shared by the holonomy map, 
it is incompatible with the presence of the negative resonance $y^{2} e_1$ of $f$.
\end{rem}

 \section{Construction of holonomies with positive resonances}\label{positiveconstruction}\label{seccons}

In this section we prove a realization result  up to an arbitrary order in the (positive) resonant case. 
 \begin{pro} \label{pro:positive_construction}
  Consider $f \in \widehat{\diff}(\mathbb{C}^n,0)$ and $A \in \gl(n,\mathbb{C})$ such that 
 $A_s = \mathrm{diag} (\mu_1, \hdots, \mu_n)$, $df (0) = e^{2 \pi i A}$ and 
 $f_s (y_1, \hdots, y_n) = (\lambda_1 y_1, \hdots, \lambda_n y_n)$. 
 Suppose that $A$ has negative spectrum and $f$ has no negative resonances.
 Then, given $\nu \in {\mathbb N}$, there exists a polynomial system of the form  \eqref{sistema0}
 whose holonomy $h$ (on $\{x=1\}$) satisfies $j^{\nu} h = j^{\nu} f$.
 \end{pro}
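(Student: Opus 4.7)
The plan is to construct the system explicitly in ``resonant polynomial'' form and to exploit a rotating-frame change of coordinates. By Remark \ref{rem:jordan} I may assume $A = A_s + A_N$ is in Jordan normal form with $A_s = \mathrm{diag}(\mu_1, \ldots, \mu_n)$, and I will take
\[
X = x \frac{\partial}{\partial x} + \sum_{k,l=1}^{n} A_{kl} y_l \frac{\partial}{\partial y_k} + \sum_{(j,k) \in \Sigma} c_{j,k} \, x^{R_{j;k}} y^j \frac{\partial}{\partial y_k},
\]
where $\Sigma$ denotes the (finite) set of indices $(j,k)$ with $2 \leq |j| \leq \nu$ and $R_{j;k} \in \mathbb{Z}_{\geq 0}$ and the coefficients $c_{j,k}$ are to be determined. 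As observed in the final remark of Section \ref{secexample}, each added monomial $x^{R_{j;k}} y^j \partial_{y_k}$ commutes with the semisimple linear part $x \partial_x + \sum_l \mu_l y_l \partial_{y_l}$ of $X$, which is exactly what will let us prescribe the holonomy freely on these monomials.

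To compute the holonomy, I pass to a rotating frame. Parametrizing $x = e^{\tau}$ and setting $w = e^{-\tau A_s} y$, a short calculation using $R_{j;k} = \mu_k - j_1 \mu_1 - \cdots - j_n \mu_n$ and the diagonality of $A_s$ turns each resonant monomial $c_{j,k} x^{R_{j;k}} y^j e_k$ into the $\tau$-independent monomial $c_{j,k} w^j e_k$, and the equation for $w$ becomes autonomous,
\[
\frac{dw}{d\tau} = A_N w + \tilde{G}(w), \qquad \tilde{G}(w) := \sum_{(j,k) \in \Sigma} c_{j,k} w^j e_k.
\]
Since the linear part $A_N$ is nilpotent, the vector field $A_N + \tilde{G}$ is a formal nilpotent vector field in $w$; translating its time-$2\pi i$ flow back to the $y$ coordinates yields the holonomy
\[
h = f_s \circ \exp\bigl(2\pi i (A_N + \tilde{G})\bigr),
\]
where $f_s$ is the linear diagonal map $e^{2\pi i A_s}$, which by hypothesis coincides with the semisimple part of $f$.

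To match $f = f_s \circ f_u$ up to order $\nu$ it thus suffices to choose the $c_{j,k}$ so that $\exp(2\pi i (A_N + \tilde{G}))$ agrees with $f_u$ up to order $\nu$. By Proposition \ref{pro:exp}, the formal unipotent diffeomorphism $f_u$ has a unique nilpotent infinitesimal generator $U = \log f_u$, whose linear part is $\log(df_u(0)) = 2\pi i A_N$. Writing $U = 2\pi i A_N + \sum_{|j| \geq 2} v_{j,k} y^j \partial_{y_k}$, I set $c_{j,k} := v_{j,k}/(2\pi i)$ for each $(j,k) \in \Sigma$. Since the $\nu$-jet of the exponential of a formal vector field depends only on the $\nu$-jet of the vector field, this yields $j^{\nu} h = j^{\nu}(f_s \circ f_u) = j^{\nu} f$.

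The main point still to verify is that the above definition actually produces a polynomial, i.e., that every non-zero monomial $y^j \partial_{y_k}$ of $U$ with $|j| \leq \nu$ satisfies $R_{j;k} \in \mathbb{Z}_{\geq 0}$; once this is known $\Sigma$ is finite and each exponent $R_{j;k}$ appearing in $Q$ is a bounded non-negative integer, so $Q$ has finite degree in $(x,y)$. The no-negative-resonance hypothesis on $f$ transfers at once to $f_u = f_s^{-1} \circ f$ because composition with the diagonal $f_s^{-1}$ only rescales monomials. For $U$ itself, I would use the formula of Remark \ref{rem:log}: $U(y_k) = \sum_{l \geq 1} (-1)^{l+1} \Theta^{l}(y_k)/l$ with $\Theta(h) = h \circ f_u - h$. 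Introducing, for each $k$, the subspace $\mathcal{A}_k$ of formal power series spanned by the monomials $y^j$ with $R_{j;k} \in \mathbb{Z}_{\geq 0}$, a monomial-by-monomial expansion of $h \circ f_u$ shows that the no-negative-resonances property of $f_u$ is equivalent to the invariance of every $\mathcal{A}_k$ under $\Theta$; since $y_k \in \mathcal{A}_k$ trivially, this gives $\Theta^{l}(y_k) \in \mathcal{A}_k$ for all $l$, and hence $U(y_k) \in \mathcal{A}_k$. This invariance check is the main (and only) technical step, and the only place where the hypothesis on $f$ is actually used.
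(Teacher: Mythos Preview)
Your argument is correct and follows essentially the same strategy as the paper's: both proofs take the formal logarithm of the unipotent part, verify that its nonzero monomials $y^j\partial_{y_k}$ all satisfy $R_{j;k}\in\mathbb Z_{\ge 0}$, and then insert the factors $x^{R_{j;k}}$ to obtain a resonant polynomial vector field whose holonomy has the desired $\nu$-jet.

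The organization differs in two places. First, the paper lifts $j^\nu f$ to a diffeomorphism $F$ of $(\mathbb C^{n+1},0)$ by inserting the $x^{R_{j;k}}$ factors \emph{before} taking the logarithm, and then invokes \cite[Lemma 2.7]{rib-embedding} to conclude that $\log F_u$ commutes with $X_s$ (which is the $(n+1)$-variable version of your resonance claim); you instead take $\log f_u$ directly in $n$ variables and give a self-contained proof of the resonance property via the invariant subspaces $\mathcal A_k$. Your computation that $R_{m;k}=R_{j;k}+\sum R_{i^{(l,s)};l}$ for the monomials appearing in $y^j\circ f_u$ is exactly the concrete content of that cited lemma. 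Second, for the identification of the holonomy the paper uses that the holonomy of the truncated field $Z_\nu$ is simply $\exp(2\pi i Z_\nu)|_{x=1}$ (since $Z_\nu$ commutes with $X_s$ and satisfies $Z_\nu(x)=x$), whereas you arrive at the same formula $h=f_s\circ\exp\!\bigl(2\pi i(A_N+\tilde G)\bigr)$ through the rotating-frame substitution $w=e^{-\tau A_s}y$. The rotating-frame argument is a bit more explicit and avoids the external citation, but the two routes are equivalent.
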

   \proof  Fix $\nu \in {\mathbb N}$.  
 Notice that $f$ is already in formal normal form.
 Denote 
  \[ f (y_1, \hdots, y_n) = \left( \sum_{|j| \geq 1} c_{j;  1} \,  y^{j}, \hdots, \sum_{|j| \geq 1 } c_{j;  n} \,  y^{j} \right) =  
 \sum_{|j| \geq 1, \, 1 \leq k \leq n} c_{j;  k}  \, y^{j} \, e_k. \]
 Since all resonances of $f$ are non-negative, we can 
 define $F \in  \diff(\mathbb{C}^{n+1},0)$ given by 
 \[ F(x,y_1, \hdots, y_n)= \left( x, \sum_{1 \leq |j| \leq \nu} c_{j;  1} \,  x^{R_{j;1}} y^{j}, \hdots, \sum_{1 \leq |j| \leq \nu } c_{j;  n} \, x^{R_{j;n}}  y^{j}
 \right) ,\]
 see Definition \ref{defn:negative}. Consider the linear vector field 
 \[ X:=  x \frac{\partial}{\partial x} + A \cdot y \frac{\partial}{\partial y}, \]  
 whose semisimple part is equal to 
  \[ X_s = x \frac{\partial}{\partial x} + A_s \cdot y \frac{\partial}{\partial y}  =
  x \frac{\partial}{\partial x} + \sum_{j=1}^{n} \mu_j y_j \frac{\partial}{\partial y_j}.  \]
 Notice that $j^{1} F = \mathrm{exp}(2 \pi i X)$ and 
 all non-vanishing monomials of $F$ are resonant for the vector field $X_s$, i.e. $F^{*} X_s = X_s$.  

 Here, we apply \cite[Theorem 1.4]{rib-embedding} to obtain a formal  vector field $Z$
 such that $F = \mathrm{exp}(2 \pi i Z)$ and  $j^{1} Z =  X$.   
 We describe the construction of $Z$ in \cite{rib-embedding} 
 in order to understand its properties. The multiplicative Jordan decomposition $F= F_s \circ F_u$ of $F$ is given by
 \[ F_s (x, y_1, \hdots, y_n) = (x, \lambda_1 y_1, \hdots, \lambda_n y_n) \quad \mathrm{and} \quad F_u = F_{s}^{-1} \circ F . \] 
 Let $\log F_u$ be the infinitesimal generator of $F_u$ (see Remark \ref{rem:log}).
 All non-vanishing monomials of $F_u$ are resonant with respect to $X_s$.
  %, or in other words $F_u^{*} X_s \equiv X_s$.
 This property is shared by $\log F_u$ \cite[Lemma 2.7]{rib-embedding}, i.e. 
 $[X_s, \log F_u] \equiv 0$. We define $Z = X_s + \frac{\log F_u}{2 \pi i}$ and so, since $X_s$ and $\log F_u$ commute,
 \[ F = F_s \circ F_u = \mathrm{exp}(2 \pi i X_s) \circ \mathrm{exp}(\log F_u) = \mathrm{exp} (2 \pi i X_s + \log F_u) = \mathrm{exp}(2 \pi i Z) . \]
Since $x \circ F_u \equiv x$, we get $(\log F_u) (x) \equiv 0$ by Equation \eqref{equ:log} and then $Z(x) \equiv x$.
 Denote by $(y)$ the ideal  $ (y_1, \hdots, y_n) $ of ${\mathbb C}[[x,y_1,\hdots,y_n]]$. The equality $(y) \circ F_u = (y)$ of ideals implies 
 that $(\log F_u) (y) \subset  (y) $ by Equation \eqref{equ:log}
 and thus $Z  (y)  \subset  (y)$. Now, it is straightforward to check that the properties
 $Z(x) \equiv x$, $Z  (y)  \subset  (y)$ and $[Z, X_s] \equiv 0$  imply that 
  $Z$ is  of the form
 \begin{equation*}
 \label{equ:auxvf}
  Z = x \frac{\partial}{\partial x} + \sum_{ 1 \leq |j|}  a_{j;  1} \, x^{R_{j,1}}  y^{j}  \frac{\partial}{\partial y_1} + \hdots + 
  \sum_{1 \leq |j|}  a_{j;  n} \, x^{R_{j,n}}  y^{j}  \frac{\partial}{\partial y_n}.  
  \end{equation*}
 Moreover, since 
 $ F_u - \mathrm{exp} (2 \pi i A_u) \cdot y = O(|y|^2)$, 
 we obtain 
 \[ \log F_u - 2 \pi i  A_u \cdot y \frac{\partial}{\partial y}  = O(|y|^{2})\] 
 by applying Equation \eqref{equ:log} to $F_u$ and $\mathrm{exp} (2 \pi i A_u) \cdot y$ and comparing the sums term by term. 
 Therefore $j^1Z = X$. Consider the polynomial vector field
 $$ Z_\nu = x \frac{\partial}{\partial x} + \sum_{ 1 \leq |j|\le \nu}  a_{j;  1} \, x^{R_{j,1}}  y^{j}  \frac{\partial}{\partial y_1} + \dots + 
  \sum_{1 \leq |j|\le \nu}  a_{j;  n} \, x^{R_{j,n}}  y^{j}  \frac{\partial}{\partial y_n}.  $$
  Let $S = \{y=0\}$.  The holonomy of the foliation generated by $Z_\nu$,
 associated to any small positively oriented simple loop in $S \setminus \{0\}$ and computed 
 on the  transversal $\{x = x_0\}$, where $x_0 \neq 0$,
 is equal to $\mathrm{exp}(2 \pi i Z)|_{x=x_0}$.
 The construction and  the inclusion $Z  (y)  \subset  (y)$ imply that
 \[ j^{\nu} \mathrm{exp}(2 \pi i Z_{\nu})|_{x=1} \equiv j^{\nu} F|_{x=1} \equiv (1,j^{\nu} f),  \]
 that is, the holonomy map of $Z_{\nu}$ on $\{x=1\}$ have the same $\nu$-th jet as $f$.
\qed

\begin{rem} 
\label{rem:y2}
The method provides a slightly more selective version of the system  \eqref{sistema0}. 
Indeed, all components of $G(x,y)$ belong to the ideal $(y)^{2}$.
\end{rem}

\section{Main construction}\label{mainconstruction}\label{secmain}  
In this section we make a sketch of the proof of Theorem \ref{maintro1} and organize it with the aid of some propositions that are proved in next sections.

Theorem \ref{maintro1} follows from the following proposition.

\begin{pro}\label{main}  Let $h\in\diff(\mathbb{C}^n,0)$ and $A \in\gl(n,\mathbb{C})$ be as in Theorem \ref{maintro1}. 
Then  there exists a system of the form \eqref{sistema0} whose holonomy computed in some hyperplane $\{x=x_0\}$,  $x_0\in\mathbb{C}^*$ is given by
$$(x_0,y)\mapsto (x_0, h(y)).$$
\end{pro}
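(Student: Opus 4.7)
My plan is to build the desired system by a successive-approximation strategy: first produce a polynomial model whose holonomy agrees with $h$ to arbitrarily high order, then correct the remaining discrepancy by a sectorial, \emph{a priori} only smooth, construction, and finally show that the resulting smooth foliation is holomorphic. The non-negativity of the resonances of the formal normal form is what lets us start with a polynomial model, while the negative-spectrum hypothesis on $A$ will be what makes the smooth correction flat enough at the separatrix $S$ to survive the jump back to the holomorphic category.

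After replacing $h$ by a diffeomorphism holomorphically conjugate to it, I may assume that $h$ coincides with a prescribed formal normal form $\tilde h$ of $h$ with respect to $A$ up to any given order $\nu$. Because $\tilde h$ has no negative resonances, Proposition \ref{pro:positive_construction} supplies a polynomial vector field $Z_\nu$ of the form \eqref{sistema0} whose holonomy $h_\nu$ on $\{x=1\}$ satisfies $j^\nu h_\nu = j^\nu h$. Thus the correction $\phi := h \circ h_\nu^{-1}$ is tangent to the identity up to order $\nu$, and can be made as flat at $0$ as we wish by choosing $\nu$ large.

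The core construction is to absorb $\phi$ into the foliation $\mathcal{F}_\nu$ defined by $Z_\nu$. Choose a simple positively oriented loop $\gamma \subset S\setminus\{0\}$ and a holomorphic transversal disk $\Sigma$ attached to a point of $\gamma$; cut $\mathcal{F}_\nu$ along $\Sigma$ and reglue one side to the other using $\phi$ instead of the identity. This yields a new foliation whose holonomy along $\gamma$ is $\phi\circ h_\nu = h$. A priori the gluing is only smooth, and the main obstacle, which I expect to be the principal technical difficulty, is to ensure that this object is a genuine $C^1$ foliation in a full neighborhood of $S$. Here the negative-spectrum hypothesis enters: the leaves of $\mathcal{F}_\nu$ are uniformly attracted to $S$ and accumulate to $\{0\}$ with exponential speed, and the sectorial framework of Section \ref{secspecial} should quantify how the high-order flatness of $\phi-\mathrm{id}$ is converted into flatness of the gluing tangent to $S$. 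Packaging these estimates is exactly the content of Proposition \ref{smoothrealization}, producing a $C^\infty$ foliation with holonomy $h$.

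Finally, the smooth foliation just obtained is holomorphic off $\Sigma$, and Propositions \ref{jexten} and \ref{teonw} should quantify, for $\nu$ sufficiently large, that the two holomorphic pieces glue with enough derivatives along $\Sigma$ to produce a $C^1$ foliation on a neighborhood of $S\setminus\{0\}$. Proposition \ref{extefol} then promotes it to a holomorphic foliation on a neighborhood of $S\setminus\{0\}$, and the general extension principle recalled in Section \ref{secsingular} extends it across the isolated singularity at the origin. The resulting holomorphic singular foliation is of the form \eqref{sistema0} and, by construction, its holonomy along $\gamma$, read on $\{x=x_0\}$, is $(x_0,y)\mapsto (x_0,h(y))$.
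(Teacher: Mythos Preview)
Your outline follows the paper's strategy: polynomial approximation via Proposition~\ref{pro:positive_construction}, a cut-and-reglue surgery producing an abstract holomorphic foliation with the prescribed holonomy, smooth realization in $\mathbb{C}^{n+1}$ via Proposition~\ref{smoothrealization}, and return to the holomorphic category via Propositions~\ref{jexten}, \ref{extefol} and Theorem~\ref{teonw}. The ingredients are all present and correctly invoked.

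There is, however, a genuine confusion in your last paragraph about \emph{where} the extension problem lives and \emph{how} it is solved. After the surgery and smooth realization, the foliation is not ``holomorphic off $\Sigma$''. A transversal $\Sigma$ is a real hypersurface (otherwise cutting along it would not disconnect anything), so Proposition~\ref{extefol}, which requires a proper analytic subvariety, cannot be applied across it. In the paper the smooth realization $F$ uses a bump function in $\arg x$, so non-holomorphicity is spread over a full angular sector; the foliation is nowhere ``holomorphic off a subvariety'' in the original coordinates. The actual mechanism is: $F$ pushes the complex structure of the abstract glued manifold $\mathcal{M}$ to an almost complex structure $J$ on $U\setminus E$ where $E=\{x=0\}$; Proposition~\ref{jexten} shows that $J$ extends $C^2$ across the complex hypersurface $E$ (this is where the flatness coming from large $\nu$ and negative spectrum is spent); Theorem~\ref{teonw} (Nijenhuis--Woolf) then integrates $J$ to produce new holomorphic coordinates $G$. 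Only in these new coordinates is the foliation holomorphic off the complex hypersurface $G(E)$ and $C^1$ globally, so that Proposition~\ref{extefol} applies across $G(E)$, not across $\Sigma$.

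You are also missing the final adjustment. The construction yields a system with linear part $\mathcal{A}$ (the diagonalized matrix) whose holonomy is only holomorphically \emph{conjugate} to $h$. The paper closes with an explicit fiber-preserving biholomorphism $H(x,y)=(x,e^{(x-x_0)B}\varphi(y))$ that simultaneously conjugates the holonomy on $\{x=x_0\}$ to be literally $h$ and restores the original matrix $A$ as the linear part.
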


The rest of the section is devoted to outline the proof of this  proposition, for which 
we introduce some simplifications.
By hypothesis, there exists a formal change  of coordinates $\psi$ such that $h$ and $A$ are respectively transformed into  ${\mathfrak h}'$ and ${\mathcal A}$,
where ${\mathcal A}_s = \mathrm{diag} (\mu_1,\hdots, \mu_n)$, ${\mathfrak h}'$ is in normal form with respect to ${\mathcal A}$
and ${\mathfrak h}'$ has no negative resonances. 
Note that 
we can assume that $\left| {\mathcal A}_{N}\right| \leq \epsilon$ for  $\epsilon>0$
arbitrarily chosen
(this can be accomplished by a linear change of coordinates that gets ${\mathcal A}$ in Jordan normal form
with the non-vanishing coefficients outside the diagonal equal to $\epsilon$ instead of the usual $1$, see Remark \ref{rem:jordan}). Fix
$\delta_0, \delta_1 >0$ such that 
$- \delta_1 < \re (\mu_j) < - \delta_0 $
for every $1 \leq j \leq n$. 
Then, by choosing
$\epsilon>0$ sufficiently small, we obtain that
\begin{align}\label{adonai}  (- \delta_1 +\epsilon)  |y|^{2} <  \re \langle {\mathcal A} \cdot y,y\rangle  <
(- \delta_0 -\epsilon)  |y|^{2} \end{align}
for all $y \in {\mathbb C}^n$, where $\langle\,\,,\,\rangle$ denotes the standard Hermitian inner product.

 Fix $\nu \in {\mathbb N}$ such that $$\delta_0 (\nu +1) - \delta_1 \geq 4.$$
By Proposition \ref{pro:positive_construction}
there exists a polynomial vector field $Z$ of the form (\ref{sistema0}),
 with $\mathcal{A}$ instead of $A$,  
whose holonomy map ${\mathfrak h}_0$ on $\{x=1\}$ satisfies $j^{\nu} {\mathfrak h}_0 \equiv  j^{\nu} {\mathfrak h}'$.
By considering a change of coordinates $\phi \in \diff(\mathbb{C}^n,0)$ such that $j^{\nu} \phi \equiv j^{\nu} \psi$, 
we obtain ${\mathfrak h} \in \diff(\mathbb{C}^n,0)$ analytically conjugated to $h$ and such that 
$j^{\nu} {\mathfrak h} \equiv j^{\nu} {\mathfrak h}' \equiv j^{\nu} {\mathfrak h}_0$.
As  the major part of the  proof of  Proposition \ref{main},  we deal with the analogous problem for 
${\mathfrak h}$ and ${\mathcal A}$.

\subsection*{An abstract foliation with the prescribed holonomy}
 We start considering the  singular holomorphic foliation $\mathcal{L}$ be defined by  $Z$ whose holonomy on $\{x=1\}$ is equal to
 ${\mathfrak h}_0$.

The heuristic idea of what we do in the rest of this subsection can be compared to the following surgery to the foliation $\mathcal{L}|_{\mathbb{C}^*\times \mathbb{C}^n}$:  first we cut $\mathbb{C}^*\times \mathbb{C}^n$ along the real hypersurface
$(0,+\infty)\times \mathbb{C}^n$ in order to generate two faces diffeomorphic to 
$(0,+\infty)\times \mathbb{C}^n$. Then we glue them back together taking care to preserve  the foliation but now generating a holonomy 
given by $\mathfrak{h}$,  instead of $\mathfrak{h}_0$.  To begin with our construction, we
consider the diffeomorphism 
\begin{align}\label{metro} \xi\colon = {\mathfrak h}_0 \circ \mathfrak{h}^{-1},
\end{align}
which clearly satisfies $j^{\nu} \xi \equiv \mathrm{id}$.
Consider the  slit plane 
$$\widehat{\mathbb{C}}=\mathbb{C}\backslash (-\infty,0]$$ and the slit disc 
$$\widehat{\mathbb{D}}(2)=\{x\in\widehat{\mathbb{C}}\colon |x|<2 \}.$$
As described next, for some ball $B$ centered at the origin of $\mathbb{C}^n$ we
 can define a unique  holomorphic map $$\Phi\colon {\widehat{\mathbb{D}}(2)}\times B\to\widehat{\mathbb{D}}(2)\times\mathbb{C}^n$$ with the following properties:
 \begin{enumerate}
 \item  $\Phi$  has the form 
$\Phi=(x,\phi(x,y))$;
\item  $\phi(1,y)=\xi(y)$;
\item  $\Phi$ leaves the foliation $\mathcal{L}$ invariant. 
 \end{enumerate} 
 Let us see. Let $\mathcal{L}'$ be the restriction 
of $\mathcal{L}$ to $$\widehat{\mathbb{C}}\times\mathbb{C}^n.$$ Given $x_0\in {\widehat{\mathbb{D}}(2)}$, if $y_0\in  \mathbb{C}^n$ is small, 
the leaf of $\mathcal{L}'$ through $(x_0,y_0)$ 
meets the hyperplane   $\{x=1\}$ at the point 
  $$  \mathrm{exp} (- \log x_0 \, Z) (x_0, y_0), $$
  where $\log $ is the principal determination of the logarithm. 
  If $|y_0|$ were small enough,  the point $\xi ( \mathrm{exp} (- \log x_0 \, Z) (x_0, y_0))$
  would be well defined and
   the leaf of $\mathcal{L}'$ through it would meet 
   the hyperplane $\{x=x_0\}$ at the point 
    \[ \mathrm{exp} (\log x_0 Z)(\xi ( \mathrm{exp} (- \log x_0 \, Z) (x_0, y_0))). \]
    We will see (Lemma \ref{expon}) that this point is actually well defined whenever 
    $|y_0|$ is small enough, independently of $x_0\in {\widehat{\mathbb{D}}(2)}$.
    Thus, if $B$  is a small enough ball centered at the origin of $\mathbb{C}^n$ we can define 
     \begin{equation}
     \label{equ:phi}
      \Phi (x,y) =  \mathrm{exp} (\log x Z)(\xi ( \mathrm{exp} (- \log x \, Z) (x, y))) 
      \end{equation}
     for $x \in \widehat{\mathbb{D}}(2)$ and $y \in B$. Then 
    it is easy to see that  $\Phi$ has the form $\Phi=(x,\phi(x,y))$ and satisfies the properties stated above.  We also can define $\phi_{\scriptscriptstyle -1}$ such that
\begin{align}\label{phi2}{ (x, \phi_{\scriptscriptstyle -1}}(x,y)) = \mathrm{exp} (\log x Z)(\xi^{-1} ( \mathrm{exp} (- \log x \, Z) (x, y)))  \end{align}
for $x \in \widehat{\mathbb{D}}(2), y\in B$. 
 Moreover, it is easy to check the following identities \begin{equation}\label{phi-1}\phi(x,\phi_{-1}(x,y))=y,\quad{\phi_{\scriptscriptstyle -1}}(x,{\phi}(x,y))=y,\quad  x\in \widehat{\mathbb{D}}(2), y\in B.  \end{equation}
 %for $\delta>0$ small enough.
 Consider the open sets
\begin{align*}
\Delta_1=\mathbb{D}\backslash [0,-i],\quad\quad {\Delta}_2=\mathbb{D}\backslash [0,i],
\end{align*} which cover  the punctured unitary disc
$\mathbb{D}^*$. We regard the sets $\Delta_1\times B$ and 
${\Delta}_2\times B$ as being disjoint and we construct a complex manifold $\mathcal{M}$ by gluing them  in the following way:
\begin{enumerate}
\item If $x\in\mathbb{D}^*$, $\re (x)<0$ and $y\in B$, we identify $(x,y)\in \Delta_1\times B$ with
 $(x,y)\in\Delta_2\times B$;
\item  If $x\in\mathbb{D}^*$, $\re (x)>0$, $y\in B$ and $\phi(x,y)\in B$,  we identify $(x,y)\in\Delta_1
\times B$ 
with $(x,\phi(x,y))\in {\Delta}_2\times B$. 
\end{enumerate}
 After the gluing process, the sets $\Delta_1\times B$ and $\Delta_2\times B$ become open sets in $\mathcal{M}$, which will be  called $\mathcal{M}_1$ and $\mathcal{M}_2$, respectively. The open set $\mathcal{M}_1$ carries a natural chart
 $\psi_1\colon \mathcal{M}_1\to \Delta_1\times B$ such that $\psi_1(p)=(x,y)$ if and only if $p$ comes from the point $(x,y)\in \Delta_1\times B$ before the gluing process. In the same way, we have  a natural chart $\psi_2\colon \mathcal{M}_2\to \Delta_2\times B$. The coordinate transformation 
 $$\Psi=\psi_2\circ\psi_1^{-1}\colon \psi_1(\mathcal{M}_1\cap \mathcal{M}_2)\to \psi_2(\mathcal{M}_1\cap \mathcal{M}_2)$$ is essentially given by the gluing rules above: the set
  $$\psi_1(\mathcal{M}_1\cap \mathcal{M}_2)\subset \Delta_1\times B$$ is the union of the disjoint open sets 
  $$\mathcal{D}^-=\{(x,y)\in \Delta_1\times B\colon \re (x)<0\}$$ and 
   $$\mathcal{D}^+=\{(x,y)\in \Delta_1\times B\colon \re (x)>0, \ \phi(x,y)\in B\}.$$ Then

 \begin{equation}\label{cambio} \Psi(x,y)=\begin{cases}(x,y) &\mbox{if }  (x,y)\in
 \mathcal{D}^-\\ (x,\phi(x,y)) &\mbox{if } (x,y)\in \mathcal{D}^+. \end{cases}\end{equation} 
 Recall that $\Phi=(x,\phi(x,y))$ preserves the leaves of the foliation $\mathcal{L}$. This means that
   the foliation 
 $\mathcal{L}$ is respected by the gluing process, so  the manifold $\mathcal{M}$ inherits a holomorphic foliation, which is given by the pullback $\psi_1^*(\mathcal{L})$ on $\mathcal{M}_1$, and the 
 the pullback $\psi_2^*(\mathcal{L})$ on $\mathcal{M}_2$. We denote this foliation by 
 $\mathcal{F}^{\scriptscriptstyle\mathcal{M}}$. The set defined in both coordinates $\psi_1$ and $\psi_2$ by the equation $y=0$
is a leaf of  $\mathcal{F}^{\scriptscriptstyle \mathcal{M}}$, which we denote by $L$. 
The following proposition follows immediately from the construction of $\mathcal{F}^{\scriptscriptstyle \mathcal{M}}$. 
\begin{pro} \label{hojal}The  leaf $L$ is closed in $\mathcal{M}$ and holomorphically equivalent to the punctured disc $\mathbb{D}^*$. Moreover,  the holonomy map of $L$
is  holomorphically conjugated to $\mathfrak{h}$.
\end{pro}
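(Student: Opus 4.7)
The plan is to verify the three assertions -- that $L$ is closed, that $L$ is biholomorphic to $\mathbb{D}^*$, and that the holonomy along a generator of $\pi_1(L)$ is analytically conjugate to $\mathfrak{h}$ -- essentially by bookkeeping with the two gluing rules in \eqref{cambio}. The observation that ties everything together is that $\phi(x,0)\equiv 0$, and hence $\phi_{\scriptscriptstyle -1}(x,0)\equiv 0$. Indeed, $\{y=0\}$ is $Z$-invariant, so both $\mathrm{exp}(\log x\,Z)$ and $\mathrm{exp}(-\log x\,Z)$ preserve $\{y=0\}$; combined with $\xi(0)=0$ (which holds since $\xi=\mathfrak{h}_0\circ\mathfrak{h}^{-1}$) and formula \eqref{equ:phi}, this yields $\phi(x,0)=0$.

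From this, the first two assertions are immediate. In each chart $\psi_i$ the set $L\cap\mathcal{M}_i$ is the closed analytic subset $\{y=0\}\subset\Delta_i\times B$; since the transition \eqref{cambio} maps $(x,0)\in\Delta_1\times B$ to $(x,0)\in\Delta_2\times B$ on both pieces of the overlap, $L$ is closed in $\mathcal{M}$ and the maps $(x,0)\mapsto x$ on $L\cap\mathcal{M}_i$ glue to a biholomorphism $L\to\Delta_1\cup\Delta_2=\mathbb{D}^*$.

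For the holonomy, I would fix a basepoint at some real $x_0\in(0,1)$ and work with the transversal $T_1=\{x_0\}\times B$ in the $\psi_1$-chart. A positively oriented generator of $\pi_1(L,x_0)$ is lifted in four segments: start at $(x_0,y_0)\in T_1$ and follow the $\mathcal{L}$-leaf in $\Delta_1\times B$ through the upper half-plane to $(-x_0,H_+(y_0))$; cross via the identity gluing into $\Delta_2\times B$; follow the $\mathcal{L}$-leaf through the lower half-plane to $(x_0,H_-(H_+(y_0)))$; and finally cross from $\Delta_2$ to $\Delta_1$ via $(x,y)\mapsto(x,\phi_{\scriptscriptstyle -1}(x,y))$. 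Here $H_+$ and $H_-$ denote the partial holonomies of $\mathcal{L}$ along the upper and lower halves of the loop, so $H_-\circ H_+=\mathfrak{h}_0^{(x_0)}$ is the holonomy of $\mathcal{L}$ at $x_0$. The resulting holonomy of $\mathcal{F}^{\scriptscriptstyle\mathcal{M}}$ at $x_0$ reads
\[
y_0\ \longmapsto\ \phi_{\scriptscriptstyle -1}\bigl(x_0,\mathfrak{h}_0^{(x_0)}(y_0)\bigr).
\]
Let $\rho_{x_0}\colon T_{x_0}\to T_1$ denote the partial holonomy of $\mathcal{L}$ along the principal branch of the logarithm, so that by \eqref{equ:phi} we have $\rho_{x_0}\circ\phi(x_0,\cdot)\circ\rho_{x_0}^{-1}=\xi$ and by the usual conjugation of holonomies $\rho_{x_0}\circ\mathfrak{h}_0^{(x_0)}\circ\rho_{x_0}^{-1}=\mathfrak{h}_0$. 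Conjugating the displayed holonomy by $\rho_{x_0}$ therefore yields $\xi^{-1}\circ\mathfrak{h}_0=\mathfrak{h}\circ\mathfrak{h}_0^{-1}\circ\mathfrak{h}_0=\mathfrak{h}$, which proves the holonomy of $L$ is analytically conjugate to $\mathfrak{h}$.

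The only delicate point is ensuring that each step of the lift stays in the domain $B$ on which $\phi$ (and hence the gluing) is defined, so that the formal composition above makes honest analytic sense. This is where one must invoke the uniform control of $\mathrm{exp}(\pm\log x\,Z)$ on $\widehat{\mathbb{D}}(2)\times B$ promised by Lemma \ref{expon} together with the fact that $j^{\nu}\xi\equiv\mathrm{id}$, so that $\phi$ is close to the identity in $y$; once these are granted, the argument is a diagram chase through the two gluing rules and the intertwining property of $\Phi$.
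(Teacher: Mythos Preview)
Your argument is correct and is precisely the verification the paper leaves implicit: the authors simply state that the proposition ``follows immediately from the construction of $\mathcal{F}^{\scriptscriptstyle\mathcal{M}}$'' and give no proof. Your key observation $\phi(x,0)\equiv 0$ and the four-segment lift of the loop through the two charts, combined with the conjugation by the partial holonomy $\rho_{x_0}$ to reduce $\phi_{\scriptscriptstyle -1}(x_0,\cdot)\circ\mathfrak{h}_0^{(x_0)}$ to $\xi^{-1}\circ\mathfrak{h}_0=\mathfrak{h}$, is exactly the bookkeeping the construction is designed to make transparent. One small notational slip: you use $T_1$ first for the transversal $\{x_0\}\times B$ in the $\psi_1$-chart and then, a few lines later, for the target $\{x=1\}$ of $\rho_{x_0}$; you should disambiguate.
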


\subsection*{Realization as a real foliation of a neighborhood of $0\in\mathbb{C}^{n+1}$. }
 As a second step, our aim is to construct  a particular  $C^\infty$ diffeomorphism between $\mathcal{M}$ and 
a neighborhood of $0\in\mathbb{C}^{n+1}$ with the vertical hyperplane $$E\colon=\{0\}\times \mathbb{C}^n$$ removed. This diffeomorphism induces
 a smooth foliation  that will extend in the class $C^{3}$ 
 to a foliation of a neighborhood of 
 $0\in\mathbb{C}^{n+1}$.  Precisely, we have the following proposition.
 \begin{pro}\label{smoothrealization} There  exist an open neighborhood $\mathcal{M}'$ of $L$ in $\mathcal{M}$,  a neighborhood $U$  of  $0\in\mathbb{C}^{n+1}$ and a smooth diffeomorphism
 $F\colon \mathcal{M}'\to U\backslash E$ such that the following properties hold:
 \begin{enumerate}
  \item \label{fro0.5} For some $r>0$,   the set
   $$U(r)\colon=\{(x,y)\in\mathbb{D}\times\mathbb{C}^n\colon  |y|<r\}$$ is contained in $U$
   \item \label{hojaL}The leaf $L$ is mapped by $F$ onto $\mathbb{D}^*\times\{0\}.$
    \item \label{fro} $F$ is holomorphic outside the set
  $$\psi_1^{-1}\left(\left\{-\frac{\pi}{4}\le\arg (x) \le\frac{\pi}{4},\quad y\in B \right\}\right).$$
 \item \label{fro11} Denote by $\mathcal{F}^{\scriptscriptstyle U}$ the push-forward of $\mathcal{F}^{\scriptscriptstyle \mathcal{M}}$ by $F$.  Then, there exist two vector fields $Y_1$ and $Y_2$
  on $U(r)$ such that:
 \begin{enumerate}
 \item \label{fro0.8} they are of class $C^{2}$;
 \item \label{fro1} they are both tangent to $\mathcal{F}^{\scriptscriptstyle U}$ on
  $U(r)\backslash E$; 
  \item \label{fro2} they are real-linearly independent on $U(r)\backslash\{0\}$;
   \item \label{fro2.1} they are tangent to the vertical hyperplane  $E$;
  \item \label{fro3}they are singular at the origin and we have 
  $$dY_1(0)=
  \begin{bmatrix}1&0\\
                          0& {\mathcal A}
\end{bmatrix}.$$
 \end{enumerate} In particular, the existence of such vector fields means that
 the foliation $\mathcal{F}^{\scriptscriptstyle U}$  extends as a $C^{3}$  foliation on 
 $U(r)\backslash\{0\}$, which leaves $E$ invariant. We denote this extension also by $\mathcal{F}^{\scriptscriptstyle U}$.
 \end{enumerate}
 \end{pro}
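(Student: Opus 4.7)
The plan is to build $F$ by patching the holomorphic charts $\psi_{1}^{-1}$ and $\psi_{2}^{-1}$ through a smooth real interpolation that depends only on $\arg x$ and is supported in the sector $|\arg x|\le \pi/4$. Outside this sector $F$ coincides with one of the biholomorphic charts, yielding property \ref{fro}. The identity $\Phi\circ\exp(tZ)=\exp(tZ)\circ\Phi$ — which follows from the very definition of $\Phi$ as a conjugation by the flow of $Z$ — gives $\Phi_{*}Z=Z$, so $F_{*}Z=Z$ off the cutoff sector. The crucial analytic input, provided by $j^{\nu}\xi\equiv\mathrm{id}$ together with the negative spectrum of $\mathcal{A}$ and the choice $\delta_{0}(\nu+1)-\delta_{1}\ge 4$, is that $\phi(x,y)-y$ vanishes to order $\nu+1$ in $y$ and to order at least $4$ in $|x|$; this compensates the $1/|x|$ blow-up of derivatives of $\arg x$ and lets $F_{*}Z$ extend to $U(r)$ as a $C^{2}$ vector field.

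\textbf{The key estimate.} From $j^{\nu}\xi\equiv\mathrm{id}$ one has $|\xi(z)-z|\lesssim|z|^{\nu+1}$ on $B$. Using inequality \eqref{adonai} applied to $\re\langle\mathcal{A}\cdot y,y\rangle$, the forward flow $\exp(-\log x\, Z)$ contracts the $y$-component by a factor at most $C|x|^{\delta_{0}}$, while the backward flow $\exp(\log x\, Z)$ expands it by at most $C|x|^{-\delta_{1}}$. Chaining these three bounds through formula \eqref{equ:phi} gives
\[
\bigl|\phi(x,y)-y\bigr|\;\le\;C\,|x|^{\delta_{0}(\nu+1)-\delta_{1}}\,|y|^{\nu+1}\;\le\;C\,|x|^{4}\,|y|^{\nu+1},
\]
and the same argument, applied to the derivatives of the three compositions, yields analogous bounds on the partial derivatives of $\phi-y$ in $(x,y)$, losing at most one factor of $|x|^{-1}$ per derivative. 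Hence $\phi-y$ extends as a $C^{3}$ function on $\mathbb{D}(2)\times B$, vanishing along $E$ to positive order.

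\textbf{Construction of $F$.} Choose smooth $\chi:\mathbb{R}\to[0,1]$ with $\chi(\theta)=0$ for $\theta\le-\pi/4$ and $\chi(\theta)=1$ for $\theta\ge\pi/4$. On the $\psi_{1}$-chart, set
\[
F\bigl(\psi_{1}^{-1}(x,y)\bigr)\,:=\,\bigl(x,\;y+\chi(\arg x)\,(\phi(x,y)-y)\bigr)
\]
when $\re x>0$ (with $\arg x\in(-\pi/2,\pi/2)$), and $F(\psi_{1}^{-1}(x,y)):=(x,y)$ when $\re x\le 0$. On the $\psi_{2}$-chart one proceeds analogously with the branch of $\arg x$ adapted to $\Delta_{2}$. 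Compatibility across $\mathcal{M}_{1}\cap\mathcal{M}_{2}$ follows from \eqref{cambio}: on the lower-right sector $\arg x\in(-\pi/2,-\pi/4]$ one has $\chi\equiv 0$, so $F$ agrees with $\psi_{1}$; on the upper-right sector $\arg x\in[\pi/4,\pi/2)$ one has $\chi\equiv 1$, so $F=\Phi\circ\psi_{1}$ matches the corresponding $\psi_{2}$-chart via \eqref{cambio}. Shrinking $B$ so that $y+\chi(\arg x)(\phi-y)$ remains inside a slightly larger ball on the support of $\chi$, the map $F$ becomes a smooth diffeomorphism from an open neighborhood $\mathcal{M}'$ of $L$ onto $U\setminus E$, with $U\supset U(r)$ and $L$ mapped bijectively onto $\mathbb{D}^{*}\times\{0\}$. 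This establishes \ref{fro0.5}, \ref{hojaL} and \ref{fro}.

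\textbf{Extension to $U(r)$.} By the observation above, $F_{*}Z=Z$ off the central sector. Inside the sector, a chain-rule calculation expresses $F_{*}Z-Z$ as a finite sum of terms of the form $\chi^{(j)}(\arg x)\cdot R_{j}(x,y)$ where each $\chi^{(j)}$ contributes at most a factor $|x|^{-j}$ when differentiated in the ambient variables, and each $R_{j}$ is a polynomial in $\phi-y$ and its partial derivatives, hence bounded by $C|x|^{4}|y|^{\nu+1}$ with the loss-per-derivative rule above. The net effect is that $F_{*}Z-Z$ and its partial derivatives up to order two extend continuously to $E$ by $0$. Setting $Y_{1}:=F_{*}Z$ on $U(r)\setminus E$ and $Y_{1}|_{E}:=Z|_{E}$ gives a $C^{2}$ vector field on $U(r)$, tangent to $E$ because $Z$ is tangent to $\{y=0\}$, with linear part $dY_{1}(0)=dZ(0)=\mathrm{diag}(1,\mathcal{A})$. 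Taking $Y_{2}:=F_{*}(iZ)$ and extending analogously produces a second such vector field, real-linearly independent from $Y_{1}$ on $U(r)\setminus\{0\}$ because $Z$ and $iZ$ span the complex tangent line to the foliation. This settles \ref{fro11}. The main technical obstacle is the derivative bookkeeping in this last step: confirming that the estimate $|\phi-y|\lesssim|x|^{4}|y|^{\nu+1}$ survives two differentiations with only a loss of $|x|^{-2}$, which is exactly where the choice $\delta_{0}(\nu+1)-\delta_{1}\ge 4$ is tight.
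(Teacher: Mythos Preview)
Your approach is essentially the same as the paper's: build $F$ in the $\psi_{1}$-chart by interpolating between $\mathrm{id}$ and $\Phi$ via a cutoff in $\arg x$, define $Y_{1}=F_{*}Z$, $Y_{2}=F_{*}(iZ)$, and use the estimate $|\phi(x,y)-y|\lesssim|x|^{4}|y|^{\nu+1}$ to extend across $E$ in class $C^{2}$. Your convention has $\chi$ running from $0$ to $1$ where the paper's $1-\rho$ runs from $1$ to $0$, but this is immaterial.

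Two points deserve correction. First, your justification for tangency to $E$ is wrong: you write ``tangent to $E$ because $Z$ is tangent to $\{y=0\}$'', but $E=\{x=0\}$, not $\{y=0\}$. The correct reason is that $Y_{1}|_{E}=Z|_{E}$ by your extension, and $Z$ is tangent to $\{x=0\}$ since $Z(x)=x$. Second, your claim that ``the same argument, applied to the derivatives of the three compositions'' yields the derivative bounds on $\phi-y$ is unnecessarily laborious and not how the paper proceeds: since $\phi$ is \emph{holomorphic} on $\widehat{\mathbb{D}}(2)\times B$, the Cauchy estimates (the paper's Proposition~\ref{margot}) immediately convert the sup-norm bound $|\phi-y|\lesssim|x|^{4}$ into $|x|^{-k}$-weighted bounds on all $k$th derivatives on a slightly smaller sector, with no need to differentiate the flow. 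You should also be aware that the paper handles global injectivity of $F$ and the inclusion $U(r)\subset F(\mathcal{M}')$ via a quantitative perturbation lemma (the bound $|d_{y}\tilde{\phi}|<1/2$ on a fixed ball), whereas your ``shrinking $B$'' is too vague to guarantee either.
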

\subsection*{Holomorphic realization of $\mathcal{F}^{\scriptscriptstyle U}$. } 
Now, our goal is to find suitable coordinates where the foliation $\mathcal{F}^{\scriptscriptstyle U}$ becomes holomorphic. The almost complex structure of $\mathcal{M}$ induces a $C^\infty$ almost complex structure on
$U\backslash E$, which we denote by $J$. The key step of the construction is the extension of $J$ to
$U(r)$. 
\begin{pro} \label{jexten}The almost complex structure $J$ extends to $U(r)$ as an almost complex structure of class 
$C^{2}$. Moreover, this structure coincides with the canonical complex structure at
each point of $E\cap U(r)$.
\end{pro}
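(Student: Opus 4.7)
First I would reduce the problem to a sector. Property~\eqref{fro} of Proposition~\ref{smoothrealization} says that $F$ is holomorphic on the complement of $\mathcal{S} := \psi_1^{-1}(\{-\pi/4 \le \arg(x) \le \pi/4\} \times B)$. Hence $J := F_* J_\mathcal{M}$ agrees with the canonical complex structure on $(U \setminus E) \setminus F(\mathcal{S})$, where it extends trivially across $E$ as the canonical structure. The real task is to show that $J$, viewed on $F(\mathcal{S})$, also extends across $E$ and matches the canonical structure there.

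Second, I would establish a quantitative flatness bound for $F$ inside $\mathcal{S}$, working in the $\psi_1$ chart. From the defining formula \eqref{equ:phi}, $\phi(x,y)$ is obtained by flowing $(x,y)$ along $Z$ for time $-\log x$ (reaching the slice $\{x=1\}$), applying $\xi$ on the $y$-variable, and flowing back. The $y$-component of the first flow is $x^{-\mathcal{A}} y$ up to higher-order terms; since $\mathcal{A}$ has spectrum in the strip $-\delta_1 < \re \mu_j < -\delta_0 < 0$, we have $|x^{-\mathcal{A}} y| \lesssim |x|^{\delta_0} |y|$ and $|x^{\mathcal{A}}| \lesssim |x|^{-\delta_1}$. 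The hypothesis $j^\nu \xi \equiv \mathrm{id}$ gives $\xi(z) - z = O(|z|^{\nu+1})$, so composing yields
\[ \phi(x,y) - y = O\bigl(|x|^{\delta_0(\nu+1) - \delta_1}\, |y|^{\nu+1}\bigr), \]
which is $O(|x|^4)$ by the choice $\delta_0(\nu+1)-\delta_1\ge 4$.

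Third, I would propagate this estimate to $J$ while controlling two derivatives. Each $\partial_x$ applied to a factor $x^{\pm\mathcal{A}}$ or $\log x$ costs at most one power of $|x|$, and $\partial_y$ costs nothing; therefore every partial derivative of $\phi - y$ of order $\le 2$ remains $O(|x|^2)$ uniformly for $y$ in a compact set. The entries of $J - J_{\mathrm{can}}$ are smooth functions of $\phi - y$ and its first partials with bounded non-vanishing denominator coming from $\det d\Phi$, so the same $O(|x|^2)$ estimate passes to $J - J_{\mathrm{can}}$ and its first two derivatives. Combining this with Step~1 via a $C^\infty$ partition of unity near $\arg x = \pm\pi/4$ in $\mathcal{M}$ produces a $C^2$ almost complex structure on $U(r)$ that equals the canonical one on $E \cap U(r)$.

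The main obstacle is the derivative bookkeeping in the third step: one must track exactly how many powers of $|x|$ each derivative consumes, both in the $\xi$ factor and in the conjugating flows of $Z$. The margin $\delta_0(\nu+1)-\delta_1\ge 4$ is chosen precisely to survive two differentiations while still leaving a strictly positive power of $|x|$, which yields vanishing of $J - J_{\mathrm{can}}$ together with its first two derivatives on $E$ and hence the $C^2$ regularity of the extension.
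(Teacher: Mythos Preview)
Your strategy is essentially the one the paper uses: reduce to the sector where $F$ fails to be holomorphic, show $\Phi-\id$ vanishes to high enough order in $|x|$, and conclude that $J-J_{\mathrm{can}}$ together with two derivatives vanishes on $E$. However, several points in your Step~3 are genuine gaps rather than routine bookkeeping.

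First, your derivative count ``each $\partial_x$ on $x^{\pm\mathcal{A}}$ or $\log x$ costs one power of $|x|$'' treats only the linear model. The actual $\Phi$ is built from the full nonlinear flow of $Z$, and more importantly the map $F_1$ used in the paper carries the smooth cutoff $\rho(\arg x)$ (this is already the gluing, so your proposed extra partition of unity in Step~1 is redundant). Derivatives of $\rho(\arg x)$ are not holomorphic and are not covered by Cauchy estimates; one must check separately that each $\partial$ on $\rho(\arg x)$ costs only one power of $|x|$ (the paper does this in Lemma~\ref{argumento}). Second, the entries of $J-J_{\mathrm{can}}$ involve $dF_1^{-1}$, not just $d\phi$: you need that $F_1^{-1}-\id$ inherits the same $O(|x|^4)$-with-derivatives behavior, which is not automatic and is the content of the paper's Proposition~\ref{pro:inverse}. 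Third, compositions like $(dF_1-\id)\circ F_1^{-1}$ must be shown to preserve the order class; this is Proposition~\ref{pro:comp_O_D}.

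The paper packages all of this into a calculus of ``order classes'' $O_{\alpha,p}$ and $D_{\beta,p}$ (Section~\ref{secspecial}), proving closure under sums, products, composition with $D_\beta$-diffeomorphisms, and inversion. With that machinery in hand, Proposition~\ref{c1e} shows $J(u)-iu\in O_3$ in a few lines; a further simplification you did not exploit is that $F_1$ is fiberwise holomorphic, so $J(u_j)=iu_j$ exactly on $U\setminus E$ and only the single vector field $J(u)$ needs analysis. Your outline would succeed once these lemmas are supplied, but as written the derivative control is asserted rather than proved.
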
 

After this proposition, we are able to apply the following theorem due to Nijenhuis and Woolf about the integrability of almost complex structures.
\begin{thm}[\cite{nw}] \label{teonw}Let $J$ be an almost-complex structure on a manifold $X$ of real dimension $2m$ and of class $C^{k,\alpha}$, where $k$ is a natural number and $\alpha\in(0,1)$. Let the torsion $[J, J]$ of this structure vanish. Then every point of $X$ has a neighborhood on which there exist
 functions $g_1,\dots, g_m$ of class $C^{k+1,\alpha/m}$, which form a complex coordinate system compatible with $J$.
\end{thm}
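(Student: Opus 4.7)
The plan is to reduce the problem to a local one near a fixed point and to construct the sought coordinates by a Newton-type iteration scheme adapted to the $C^{k,\alpha}$ setting. Fix $p\in X$ and choose real $C^{k,\alpha}$ coordinates centered at $p$; after an $\mathbb{R}$-linear change I may assume $J_{p}$ coincides with the standard complex structure $J_{0}$ on $\mathbb{R}^{2m}\cong\mathbb{C}^{m}$. Then $J=J_{0}+K$ with $K$ a $(1,1)$-tensor of class $C^{k,\alpha}$ vanishing at the origin. Producing compatible coordinates $g_{1},\dots,g_{m}$ is equivalent to producing $m$ independent functions whose differentials are of $J$-type $(1,0)$, i.e., satisfy $\bar{L}_{j}g_{i}=0$ for a local frame $\{\bar{L}_{1},\dots,\bar{L}_{m}\}$ of the antiholomorphic subbundle $T^{0,1}X$. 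In the chosen coordinates one may write $\bar{L}_{j}=\partial/\partial\bar{z}^{j}+\sum_{k}a_{jk}(z,\bar{z})\,\partial/\partial z^{k}$ with $a_{jk}\in C^{k,\alpha}$ and $a_{jk}(0)=0$.

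The hypothesis $[J,J]=0$ is equivalent to Frobenius involutivity of $T^{0,1}X$ as a complex distribution, which is the integrability condition for the overdetermined first-order system $\bar{L}_{j}g=0$. Concretely, the unknown $g$ satisfies a quasilinear equation $\bar{\partial}g=A(z,\bar{z})\cdot\partial g$, where $A$ is $C^{k,\alpha}$ and vanishes at the origin. I would solve this on a small polydisc $P$ around $p$ by a quadratically convergent Newton scheme: set $g^{(0)}=z$ and at each stage correct by the solution of the linearized $\bar{\partial}$-equation $\bar{\partial}u=R^{(n)}$ for the current residual. A right inverse $T$ of $\bar{\partial}$ of Bochner--Martinelli type provides, via Schauder estimates, a bounded operator $C^{k,\alpha}(P)\to C^{k+1,\alpha'}(P)$ for $\alpha'$ slightly below $\alpha$; the involutivity coming from $[J,J]=0$ ensures that each residual $R^{(n)}$ lies in the image of $\bar{\partial}$ modulo terms quadratic in $g^{(n)}-g^{(n-1)}$, so the linearized step is always solvable and the scheme closes on itself.

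The main analytic obstacle is to propagate control of the H\"older regularity through the infinite iteration. Each step loses a fixed fraction of the H\"older exponent because the nonlinearity mixes real and complex directions non-isotropically, and the total accumulated loss is what forces the final exponent to be $\alpha/m$ rather than $\alpha$. The compensating mechanism is the quadratic convergence of Newton's method, which makes the residuals shrink rapidly enough to absorb the fractional loss at every stage. Once convergence is established in $C^{k+1,\alpha/m}(P')$ on a smaller polydisc $P'$, the limit functions $g_{1},\dots,g_{m}$ satisfy $\bar{L}_{j}g_{i}=0$ and have identity as their first-order parts, so they are independent near $p$ and provide the desired local complex coordinate system compatible with $J$.
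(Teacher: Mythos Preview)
The paper does not prove this theorem. It is quoted from Nijenhuis and Woolf \cite{nw} and invoked as a black box in the proof of Proposition~\ref{new}; there is no argument in the paper to compare your proposal against.

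On the merits of your sketch: the overall architecture --- reduce to a local $\bar\partial$-type system $\bar\partial g = A\,\partial g$, use the vanishing of $[J,J]$ to obtain Frobenius involutivity of $T^{0,1}$, and solve by an iteration built on a right inverse of $\bar\partial$ with Schauder estimates --- is in the spirit of the original Nijenhuis--Woolf argument. But your explanation of the exponent $\alpha/m$ is not correct. Standard Schauder theory for a right inverse of $\bar\partial$ on a ball gives $C^{k,\alpha}\to C^{k+1,\alpha}$ with \emph{no} loss of H\"older exponent, so a Newton scheme as you describe it would not by itself produce the drop to $\alpha/m$; the sentence ``the nonlinearity mixes real and complex directions non-isotropically'' is not a mechanism. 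In the original paper the loss of a factor $1/m$ in the exponent comes from the way the $m$ coordinate functions are constructed successively, each step feeding the regularity of the previous solution into the data of the next. If you want this to be an actual proof rather than a heuristic, you would need to locate precisely where and why regularity is lost and make the convergence argument quantitative; as written it is an outline, not a proof.
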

Thus, with the aid of this theorem we prove the following proposition.
\begin{pro} \label{new}There exist an open set $U'\subset U(r)$ of the form 
$$U'=\{|x|<r', |y|<r'\},\quad r'>0$$
and a diffeomorphism 
 $G\colon U' \to \Omega$, $G(0)=0,$
 where $\Omega$ is a neighborhood of
 $0\in\mathbb{C}^{n+1}$, such that the following properties hold:
\begin{enumerate}
\item \label{new1}$G$ is of class $C^{2, 1/(n+1)}$ and in particular $C^{2}$.
\item \label{new2}$G$ takes the structure $J$ on a neighborhood of $0\in\mathbb{C}^{n+1}$ to the canonical almost complex structure of $\mathbb{C}^{n+1}$.
\item \label{new3}$G$ maps  $\{y=0\}\cap U'$ onto $\{y=0\}\cap\Omega$ and $E\cap U'$ onto $E\cap \Omega$, respectively.
\item \label{new4}$dG(0)=\id$.
\item \label{new5} The foliation $\mathcal{F}^{\scriptscriptstyle U}$ on ${U'\backslash\{0\}}$ is taken by $G$ to a holomorphic foliation  $\mathcal{F}$
on $\Omega\backslash\{0\}$.
\item \label{new6} The foliation $\mathcal{F}$  is generated by a 
  holomorphic vector field
 $Y$ on $\Omega$, with a unique singularity at the origin, such that 
 \[ dY(0) =
  \begin{bmatrix}1&0\\
                          0& {\mathcal A}
\end{bmatrix}. \]
 \item \label{new7}The curve $S=\{y=0\}$ is invariant by $Y$ and its holonomy is holomorphically conjugated to $\mathfrak{h}$. 
\end{enumerate}
\end{pro}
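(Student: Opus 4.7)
The plan is to apply the Nijenhuis--Woolf theorem (Theorem~\ref{teonw}) to the almost complex structure $J$ on $U(r)$ supplied by Proposition~\ref{jexten}. First, I check that the torsion $[J,J]$ vanishes on $U(r)$: on the open dense subset $U(r)\setminus E$, the structure $J$ is the push-forward under $F$ of the genuine complex structure of $\mathcal{M}'$, so $[J,J]\equiv 0$ there; since $J$ is $C^{2}$, the tensor $[J,J]$ is continuous and therefore vanishes on all of $U(r)$ by density. Viewing a $C^{2}$ tensor as $C^{1,\alpha}$ for $\alpha\in(0,1)$ arbitrarily close to $1$, Theorem~\ref{teonw} provides, on a polydisc $U'=\{|x|<r',\,|y|<r'\}\subset U(r)$ about $0$, a diffeomorphism $\tilde G$ of class $C^{2,\alpha/(n+1)}$ carrying $J$ to the canonical complex structure of $\mathbb{C}^{n+1}$; choosing $\alpha$ close enough to $1$ gives (\ref{new1}) and (\ref{new2}).

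I next normalize $\tilde G$ to obtain (\ref{new3}) and (\ref{new4}). Both $E\cap U(r)$ and $\{y=0\}\cap U(r)$ are $J$-holomorphic: the former because $J=J_{\mathrm{can}}$ on $E\cap U(r)$ by Proposition~\ref{jexten}; the latter because $\{y=0\}\cap U(r)\setminus E=F(L)$ is a $J$-holomorphic leaf of $\mathcal{F}^{\scriptscriptstyle U}$ and its tangent direction at the origin is $J_{\mathrm{can}}$-invariant. Their images under $\tilde G$ are therefore transverse complex submanifolds of $\mathbb{C}^{n+1}$ through $\tilde G(0)$, of complex dimensions $n$ and $1$. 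A biholomorphic change of coordinates---translating $\tilde G(0)$ to $0$, then straightening these two complex submanifolds to $\{x=0\}$ and $\{y=0\}$ via the holomorphic inverse function theorem---followed by composition with the inverse of the resulting $\mathbb{C}$-linear part at $0$ produces the desired $G$.

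For (\ref{new5}), I transport the foliation. On $U(r)\setminus E$ the foliation $\mathcal{F}^{\scriptscriptstyle U}$ is $J$-holomorphic, being $F$-related to $\mathcal{F}^{\scriptscriptstyle \mathcal{M}}$; since both the tangent distribution of $\mathcal{F}^{\scriptscriptstyle U}$ (Proposition~\ref{smoothrealization}(\ref{fro11})) and $J$ extend continuously across $E$, the pointwise $J$-invariance of tangent lines persists on all of $U(r)$. Hence $\mathcal{F}:=G_{*}\mathcal{F}^{\scriptscriptstyle U}$ is a holomorphic foliation on $\Omega\setminus\{0\}$, where $\Omega:=G(U')$. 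Since $\{0\}$ has complex codimension $n+1\geq 2$, the extension principle recalled in Section~\ref{secsingular} upgrades $\mathcal{F}$ to a singular holomorphic foliation on $\Omega$, locally generated near $0$ by a holomorphic vector field $Y^{\ast}$ whose components have no common factor.

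The main obstacle is (\ref{new6}): choosing a holomorphic generator with linear part exactly the prescribed block matrix $\mathcal{B}$. The push-forward $G_{*}Y_{1}$ is a $C^{2}$ vector field tangent to $\mathcal{F}$, vanishing only at $0$, with $d(G_{*}Y_{1})(0)=\mathcal{B}$ (using $dG(0)=\mathrm{id}$). Writing $G_{*}Y_{1}=h\,Y^{\ast}$ on a punctured neighborhood of $0$ with $h$ a complex-valued continuous function, the equality of tangent directions $[dY^{\ast}(0)\,p]=[\mathcal{B}\,p]$ in $\mathbb{CP}^{n}$ for every $p\neq 0$ (obtained by letting $t\to 0$ in $G_{*}Y_{1}(tv)=t\mathcal{B}v+O(t^{2})$ and $Y^{\ast}(tv)=t\,dY^{\ast}(0)v+O(t^{2})$) forces $dY^{\ast}(0)=c\mathcal{B}$ for some $c\in\mathbb{C}$; the relation $\mathcal{B}=h(0)\,dY^{\ast}(0)$ together with the invertibility of $\mathcal{B}$ yields $c\neq 0$ and $h(0)=1/c$, so $Y:=cY^{\ast}$ has $dY(0)=\mathcal{B}$ and, after shrinking $\Omega$, possesses $0$ as its unique singularity. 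Assertion (\ref{new7}) is then immediate: $S\setminus\{0\}=G(F(L))$ under the identifications above, so the holonomy of $S$ is holomorphically conjugate to that of $L$, which is $\mathfrak{h}$ by Proposition~\ref{hojal}.
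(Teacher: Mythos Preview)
Your treatment of items (\ref{new1})--(\ref{new5}) and (\ref{new7}) follows the paper's line closely; the only point to watch is that the passage ``$J$-invariance of tangent lines persists $\Rightarrow$ $\mathcal{F}$ is holomorphic on $\Omega\setminus\{0\}$'' is really an invocation of Proposition~\ref{extefol} (holomorphic on $\Omega\setminus E$ plus $C^1$ extension across $E$), which you use implicitly.

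The genuine gap is in (\ref{new6}). Your argument for $dY^{\ast}(0)=c\mathcal{B}$ presupposes that $dY^{\ast}(0)v\neq 0$ for all $v\neq 0$: if $dY^{\ast}(0)v=0$ then $Y^{\ast}(tv)=O(t^{2})$ and the limit $[Y^{\ast}(tv)]\to[dY^{\ast}(0)v]$ is meaningless. Likewise, the relation $\mathcal{B}=h(0)\,dY^{\ast}(0)$ assumes $h$ extends continuously to $0$, which you have not shown; if $dY^{\ast}(0)=0$ then in fact $|h(p)|\gtrsim |p|^{-1}$ and no continuous extension exists. Nothing in your setup rules out $dY^{\ast}(0)=0$ a priori.

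The paper avoids this detour entirely: it observes that $G_{*}Y_{1}$ coincides on $\Omega\setminus E$ with $(G\circ F)_{*}\mathcal{Z}$, the push-forward of the holomorphic vector field $\mathcal{Z}$ under the biholomorphism $G\circ F:\tilde{\mathcal{M}}\to\Omega\setminus E$, and is therefore holomorphic there. Since $G_{*}Y_{1}$ is continuous on $\Omega$, Riemann extension across the hypersurface $E$ makes it holomorphic on all of $\Omega$, and one takes $Y:=G_{*}Y_{1}$ directly; then $dY(0)=dG(0)\cdot dY_{1}(0)=\mathcal{B}$ is immediate. Note that this same holomorphicity of $G_{*}Y_{1}$ on $\Omega\setminus E$ is exactly what would let you complete your own argument (it makes $h$ holomorphic on $\Omega\setminus E$, hence on $\Omega\setminus\{0\}$ by continuity, hence on $\Omega$ by Hartogs, whence $\mathcal{B}=h(0)\,dY^{\ast}(0)$), but once you have it the route through $Y^{\ast}$ and $h$ is unnecessary.
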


  Now, we are able to make the 
    proof of Proposition \ref{main}.
    \subsection*{Proof of Proposition \ref{main}} Let $\mathcal{F}$ and $Y$ as given by the previous proposition. 
    Up to a linear change of coordinates in the $y$ variables, we can suppose 
    \begin{equation}
    \label{equ:sety}
     j^{1} Y = x \frac{\partial}{\partial x} + A \cdot y \frac{\partial}{\partial y} 
     \end{equation}
     by  \eqref{new6} of Proposition  \ref{new}.
    For $x_0\in\mathbb{C}^*$ small enough, 
     the holonomy of $S=\{y=0\}$ can be computed on
     $\{x=x_0\}$ and 
   is given in the form
    $$h_S\colon (x_0,y)\mapsto (x_0,g(y)),$$
    where $g$ is a local diffeomorphism in $\diff(\mathbb{C}^n,0)$. From \eqref{new7}
    of Proposition \ref{new}, $g$ is holomorphically conjugated to ${\mathfrak h}$ and hence to $h$.
    So, there exists
     a local diffeomorphism $\varphi\in\diff(\mathbb{C}^n,0)$ such that 
    $$\varphi\circ g\circ \varphi^{-1}= h.$$  Since the matrix $d\varphi(0)$ is invertible, we can 
    find $B\in \gl (n,\mathbb{C})$ such that 
    $$e^{x_0 B}=d\varphi(0).$$ Then, if $\varphi$ is well defined and injective on a ball 
    $$B_\epsilon=\{y\in\mathbb{C}^n\colon |y|<\epsilon\}, \quad 0<\epsilon <1,$$ 
    it is easy to verify that   the holomorphic map 
    $$H\colon (x,y)\mapsto \left(x,e^{(x-x_0)B}\cdot \varphi (y)\right)$$ satisfies the following properties:
    \begin{enumerate}
    \item $H$ is injective on $\mathbb{C}\times B_\epsilon$.
    \item $H(x_0,y)=(x_0,\varphi (y))$.
    \item \label{hid} $dH(0)=\id$.
    \end{enumerate}
    In particular, the set 
   $$\Omega\cap \left(\mathbb{C}\times B_\epsilon\right)$$ is mapped by $H$ biholomorphically onto 
   an open set $\tilde{\Omega}$ in $\mathbb{C}^{n+1}$. The pushforward of $\mathcal{F}$ by $H$
   	defines a holomorphic foliation $\tilde{\mathcal{F}}$ on $\tilde{\Omega}$, which is generated by a holomorphic vector field $\tilde{Y}$, which is the pushforward of the vector field $Y$. Recall from 
   	\eqref{fro11} of Proposition \ref{smoothrealization}  that $\mathcal{F}^U$ leaves $\{y=0\}$ and
   	 $\{x=0\}$ invariant. Thus, from \eqref{new3} of Proposition \ref{new} we have that
   	 $\{y=0\}$ and
   	 $\{x=0\}$ are also invariant by $\mathcal{F}$. Then, 
   	since $H$ preserves  $\{y=0\}$ and
   	 $\{x=0\}$, we conclude that $S=\{y=0\}$ and
   	 $\{x=0\}$ are invariant by $\tilde{\mathcal{F}}$. 
   	 In particular,
   	$$\tilde{S}\colon= H(S)$$ is invariant by 
   	$\tilde{\mathcal{F}}$. Since $H$ preserves the set $\{x=x_0\}$, the holonomy of $\tilde{S}$
   	computed in $\{x=x_0\}$ is given by
   	\begin{align*}(x_0,y)\mapsto H\circ h_S\circ H^{-1}(x_0,y)&=H\circ h_S (x_0,\varphi^{-1}(y))\\
   	&=H(x_0,g\circ \varphi^{-1}(y))\\ &=(x_0,\varphi \circ g\circ \varphi^{-1}(y))\\ &=(x_0, h(y)).\end{align*}
   	 From  Equation \eqref{equ:sety} and property \eqref{hid} of $H$ we see that 
   	 $$  j^{1} \tilde{Y} = x \frac{\partial}{\partial x} + A \cdot y \frac{\partial}{\partial y}   .$$ 
	 This together with fact of $\{y=0\}$ and
   	 $\{x=0\}$ being invariant by $\tilde{Y}$ allow us to conclude that $\tilde{\mathcal{F}}$
   	 is defined by a holomorphic system of the form 
   	 \begin{align*}
   	 x'&=x(1+u(x,y))\\
   	 y'&= A\cdot y + v(x,y),
   	 \end{align*} where $u(0,0)=0$, $dv (0)=0$ and  $v(x,0)=0$. 
	 Thus, multiplying by the factor $1/(1+u)$ we conclude that  $\tilde{\mathcal{F}}$ is also defined by a system of the form
   	 \begin{align*}
   	 x'&=x\\
   	 y'&= A\cdot y + G(x,y),
   	 \end{align*} where $G(x,0)=0$ and $dG (0)=0$. 
Therefore Proposition \ref{main} is proved. \qed

We devote the end of the section to Lemma \ref{expon} that was used at the beginning of the section to justify that \eqref{equ:phi} is well-defined.
First, fix $x_0 \in  \widehat{\mathbb{D}}(2)$ and consider $y_0\in {\mathbb C}^n$. 
Recall that $Z$ is a system of the form \eqref{sistema0}, with $\mathcal A$ instead of $A$. 
If the curve
 \begin{align*}w(t)= (x_0^{1-t},y(t)) &= \mathrm{exp} (-t \log x_0 Z) (x_0 , y_0),
   \end{align*}
   which is a solution of the system  
    \begin{align}\label{ezequiel}
       w'  =  -\log x_0 Z(w),
 \end{align}  
 is defined for all $t\in [0,1]$, we define $f_{x_0}(y_0)=y(1)$. The function $f_{x_0}$ represents
 the holonomy map of $\mathcal L'$ from $\{x=x_0\}$ to $\{x=1\} $, it  is a biholomorphism  between neighborhoods of $0\in\mathbb C^n$ that fixes the origin.  
 \begin{lem}
 \label{lem:hol}
  There exist constants $r, c_0,c_1>0$ such that  $f_x$ is defined on the ball $\{|y|< r\}$ 
 for all  $x\in\widehat{\mathbb{D}}(2)$ and
\begin{align}
\label{simon}
c_1|x|^{\delta_1}|u-v|\le|f_x(u)-f_x(v)|\le c_0|x|^{\delta_0}|u-v|
\end{align}
whenever 
 $x\in\widehat{\mathbb{D}}(2)$, $|u|<r$ and $|v|<r$. 
 \end{lem}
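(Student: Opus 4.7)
The plan is to recast the lemma as a real-time ODE on $t\in[0,1]$ and apply a Gronwall-type estimate to the difference of two solutions, splitting the interval according to the modulus of $x(t)$. With the principal branch of $\log$ so that $|\arg x_0|<\pi$, equation \eqref{ezequiel} yields $\dot y=-\log x_0\cdot[\mathcal A y+G(x_0^{1-t},y)]$. For two solutions $y,\tilde y$ with initial data $u,v$, setting $D=y-\tilde y$ and using $G(\cdot,0)\equiv 0$ together with $dG(0)=0$, one obtains $\dot D=-\log x_0\cdot(\mathcal A+M(t))D$, where $M(t):=\int_0^1\partial_y G(x_0^{1-t},\tilde y+\sigma D)\,d\sigma$ satisfies $|M(t)|\le C(|x(t)|+r)$ whenever $|y(t)|,|\tilde y(t)|\le r$. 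A direct calculation gives
\begin{equation*}
\tfrac{d}{dt}\log|D|^2=-2\log|x_0|\,\re\xi(t)+2\arg x_0\,\im\xi(t),\qquad \xi(t):=\bigl\langle(\mathcal A+M)\tfrac{D}{|D|},\tfrac{D}{|D|}\bigr\rangle.
\end{equation*}
When $|M|\le\eta$ for a sufficiently small $\eta$, the hypothesis \eqref{adonai} forces $\re\xi\in[-\delta_1+\eta',-\delta_0-\eta']$ for some $\eta'>0$, while $|\im\xi|\le K$ is bounded a priori by a constant depending only on $\mathcal A$.

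Next, I would fix $\rho>0$ with $C\rho<\eta/4$ and split $[0,1]$ at $t_*:=1-\log\rho/\log|x_0|$ when $|x_0|<\rho$, and $t_*:=0$ otherwise. On the inner piece $[0,t_*]$, $|x(t)|\le\rho$, and any bootstrap bound $|y|,|\tilde y|\le r$ with $Cr<\eta/4$ makes $|M|\le\eta$; integrating the differential inequality gives
\begin{equation*}
\Bigl(\tfrac{|x_0|}{\rho}\Bigr)^{\delta_1-\eta'}e^{-\pi K}\le\frac{|D(t_*)|}{|D(0)|}\le\Bigl(\tfrac{|x_0|}{\rho}\Bigr)^{\delta_0+\eta'}e^{\pi K},
\end{equation*}
where the factor $e^{\pm\pi K}$ absorbs the $\im\xi$ contribution through the bound $|\arg x_0|<\pi$. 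On the outer piece $[t_*,1]$ the complex time $-(1-t_*)\log x_0$ has magnitude bounded by $-\log\rho+\pi$ independently of $x_0$, so standard Gronwall gives $|D(1)|/|D(t_*)|$ bounded above and below by positive constants depending only on $\rho$. Composing the two estimates and absorbing the $\rho$-dependent prefactors and the $\eta'$-slack into $c_0,c_1$ yields \eqref{simon}. Specializing the estimate to $\tilde y\equiv 0$ (a solution since $G(\cdot,0)\equiv 0$) gives $|y(t)|\le C_0|y_0|$ uniformly in $t\in[0,1]$ and $x_0\in\widehat{\mathbb D}(2)$, so choosing $r$ small enough that $C_0 r$ stays below the bootstrap threshold ensures that every solution with $|y_0|<r$ remains in $\{|y|<r\}$ throughout $[0,1]$; this provides the uniform domain of definition of $f_x$.

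The main obstacle is that $\log x_0$ is complex: the derivative $\tfrac{d}{dt}\log|D|^2$ contains the cross-term $2\arg x_0\,\im\xi$, which is not governed by the negative real-part hypothesis \eqref{adonai} and is a priori of the same order as the leading term. This is neutralized by the combination of the slit domain $\widehat{\mathbb D}(2)$, which keeps $|\arg x_0|$ strictly less than $\pi$, and the a priori bound on $|\im\xi|$: their product integrates to a constant factor $e^{\pm\pi K}$ that is swallowed into $c_0,c_1$ without affecting the leading power-law behavior $|x_0|^{\delta_j}$.
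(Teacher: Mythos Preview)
Your argument is correct and shares the paper's core structure: a Gronwall-type differential inequality for $|u(t)-v(t)|^2$, the observation that the $\arg x_0$ contribution integrates to a bounded multiplicative factor because $|\arg x_0|<\pi$ on the slit disc, and a bootstrap with $v\equiv 0$ to secure the uniform domain of definition. The one substantive difference lies in how the nonlinearity $G$ is controlled. The paper invokes Remark~\ref{rem:y2}, which gives $G\in(y)^2$ and hence $|\partial_y G(x,y)|\le c|y|$ \emph{uniformly} on $\{|x|\le 1\}$; this makes the error term small on the whole interval $[0,1]$ once $r$ is small, so the differential inequality is run directly on $[0,1]$ for every $|x_0|<\varepsilon$, while the regime $|x_0|\ge\varepsilon$ is dispatched by a soft compactness argument in the parameter $(\rho,\theta)\in[\varepsilon,2]\times[-\pi,\pi]$. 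You use only $dG(0)=0$, obtaining the weaker bound $|M|\le C(|x(t)|+r)$, which forces you to split the \emph{time interval} at the point $t_*$ where $|x(t_*)|=\rho$ and to handle the outer piece $[t_*,1]$ by a bounded-complex-time Gronwall. Your route is slightly more general---it does not rely on the special quadratic-in-$y$ structure of the vector field constructed in Proposition~\ref{pro:positive_construction}---at the cost of extra bookkeeping; the paper's route is shorter but leans on that structure.
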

 \begin{proof}
  If we express $x=\rho e ^{i\theta}$, 
 $\rho\in (0,2)$, $\theta\in (-\pi,\pi)$, we can regard $f\colon=f_x$ as depending (continuously) on the parameter $(\rho,\theta)\in (0,2)\times (-\pi,\pi)$. The advantage now is that $f$ can be extended
to a family of biholomorphisms depending continuously on the parameter
$(\rho,\theta)\in (0,2]\times [-\pi,\pi]$. Thus, if we fix $\varepsilon\in (0,2)$, from the compactness
of $[\varepsilon,2]\times [-\pi,\pi]$ it is not difficult to see that there exist $r_\varepsilon, c_\varepsilon>0$ such that, for all $x\in  \widehat{\mathbb{D}}(2)$ with
$|x|\ge\varepsilon$, the map
$f_x$ is defined on the ball $\{|y|<r_\varepsilon\}$  and 
$$({1}/{c_\varepsilon}) |u-v|\le |{f_x(u)-f_x(v)}|\le c_\varepsilon |u-v|$$
whenever $ x\in  \widehat{\mathbb{D}}(2)$, $|x|\ge\varepsilon$, $|u|<r_\epsilon$ and  $|v|<r_\epsilon$. 
Therefore, it suffices to  prove the same but with the restriction 
$|x|\le \varepsilon$.

We choose $\varepsilon=e^{-{2\pi |\mathcal A|}/{\delta_0}}$.
Take  $r\in(0,1]$.  Fix $x_0 \in  \widehat{\mathbb{D}}(2)$ with $|x_0|\le\varepsilon$. Consider $u_0,v_0\in {\mathbb C}^n$ with $|u_0|,|v_0|<r$ and suppose that  the curves 
 \begin{align*} (x_{0}^{1-t},u(t)) &= \mathrm{exp} (-t \log x_0 Z) (x_0 , u_0)\quad \textrm{and}\\
   (x_{0}^{1-t},v(t)) &= \mathrm{exp} (-t \log x_0 Z) (x_0 , v_0)
   \end{align*}
   are defined on the interval $[0,s]$ with $s\in(0,1]$. Suppose moreover that
 $|u(t)|, |v(t)|<r$ for all $t\in(0,s]$. Then, since $|x_0|\le 1$,  the curves above are contained in
  $\{|x|\le 1,|y|\le 1\}$.  Recall that 
  --- from Remark \eqref{rem:y2} --- the function $G$ satisfies in the set  $\{|x|\le 1,|y|\le 1\}$
  an inequality of the form $|\partial G/\partial y|\le c |y|$ for some  $c>0$. Then
 $$|G(x_{0}^{1-t},u(t))-G(x_{0}^{1-t},v(t))|\le cr|u(t)-v(t)|,\quad t\in [0,s].$$
 Set $g(t) =|u(t) - v(t)|^2$.  Then $g'=2\re\langle u' - v' ,u-v \rangle$ and hence,
 using the fact that $u$ and $v$ are solutions of \eqref{ezequiel}, we have that
\begin{align*}g'(t)&= 2\re\langle - \log x_0 \mathcal{A} (u-v) , u-v\rangle + 
Q,
\end{align*}
where   
\begin{align*}
\begin{split}
|Q| = & \left|2\re\langle - \log x_0  [G(x_{0}^{1-t},u)-G(x_{0}^{1-t},v)] , u-v\rangle \right|\le \\  
& 2cr (-\log |x_0|+\pi)|u-v|^2. 
\end{split}
\end{align*} 
On the other hand, it follows from \eqref{adonai} that
 \[ (\log |x_0| (\delta_1 - \epsilon) - \pi |\mathcal{A}|) |y|^{2}  \leq \re\langle - \log x_0 \, \mathcal{A}\cdot y,y\rangle 
\leq  (\log |x_0| (\delta_0 + \epsilon) + \pi |\mathcal{A}|) |y|^{2}   \] 
for all $y\in\mathbb C^n$. Therefore, if we fix $r$ such that $r <\min\{\epsilon/c, |\mathcal A|/c\}$,
we obtain 
\begin{align*} 2\eta_1 g(t)\le g'(t)\le 2 \eta_0 g(t), \quad t\in[0,s],
\end{align*}
where  
$\eta_1= \delta_1\log |x_0| -2\pi|\mathcal A|$ and $\eta_0= \delta_0\log |x_0| +2\pi|\mathcal A|<0$.  Thus, we conclude that
\begin{align}\label{levi} e^{\eta_1 t}|u_0-v_0|\le |u(t)-v(t)|\le e^{\eta_0 t} |u_0-v_0|, \quad t\in[0,s],
\end{align}
provided $u$ and $v$ are defined and $|u|,|v|<r$  on $ [0,s] $ ($s\le1$).
Let us show that $u(t)$  is in fact defined on $[0,1]$ and $|u(t)|<r$ for all $t\in[0,1]$ --- and the same for $v$. Otherwise, or the solution $(x(t),u(t))$ is defined on a maximal interval $[0,s)$ with $s\le 1$,
or $u$ is defined on $[0,1]$ and there exists $s_0\in [0,1]$ such that $|u(t)|<r$ for all
$t\in[0,s_0)$ and $|u(s_0)|=r$.  In the first case, since $(x,u)$ is a maximal solution 
of the system \eqref{ezequiel}, it can not be contained in the compact set $\{|x|\le1,|y|\le r\}$, so
again there exists 
$s_0\in [0,1]$ such that $|u(t)|<r$ for all
$t\in[0,s_0)$ and $|u(s_0)|=r$. If  in  \eqref{levi} we set  $v_0=0$, and so  $v\equiv 0$, we obtain
that 
$|u(t)|\le e^{\eta_0 t} |u_0|$ for all $t\in[0,s_0]$, so that $|u(s_0)|\le |u_0|<r$, which is a contradiction.

Thus, we have proved that the holonomy $f_x$ is defined on $\{|y|<r\}$ and, by substituting 
the values of $\eta_0$ and $\eta_1$ in \eqref{levi}, we obtain
$$e^{ -2 \pi|\mathcal A| } |x_0|^{\delta_1}|u_0-v_0|\le |u(1)-v(1)|\le e^{2 \pi|\mathcal A| }|x_0|^{\delta_0} |u_0-v_0|,$$
which finishes the proof.
 \end{proof}
 Finally, we can prove the announced Lemma \ref{expon}.
\begin{lem}\label{expon} There exist $\delta>0$ and $C>0$ such that the maps $\Phi$ and $\Phi^{-1}$ are well-defined and satisfy 
\[  \max (|\phi(x,y) - y|,  |\phi_{-1}(x,y) - y| ) \leq C |x|^{\delta_0 (\nu+1) - \delta_1} |y|^{\nu+1}    \]
whenever $x \in  \widehat{\mathbb{D}}(2)$ and $|y| < \delta$.
\end{lem}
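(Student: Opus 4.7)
The plan is to express $\phi$ and $\phi_{-1}$ in terms of the holonomy map $f_x$ studied in Lemma \ref{lem:hol} and the deviation $\xi - \mathrm{id}$, and then combine the Lipschitz bounds for $f_x$ with the flatness of $\xi$ at the origin.

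\textbf{Rewriting $\phi$ in terms of $f_x$.} From the definition \eqref{equ:phi}, the point $\mathrm{exp}(-\log x\, Z)(x,y)$ lies in the hyperplane $\{x=1\}$ and its $y$-coordinate is precisely $f_x(y)$. Applying $\xi$ to this coordinate and then flowing back by $\mathrm{exp}(\log x \, Z)$ returns a point in $\{x=x\}$ whose $y$-coordinate is, by the very definition of the holonomy, $f_x^{-1}(\xi(f_x(y)))$. Hence
\[
\phi(x,y) = f_{x}^{-1}\!\bigl(\xi(f_x(y))\bigr), \qquad \phi_{-1}(x,y) = f_{x}^{-1}\!\bigl(\xi^{-1}(f_x(y))\bigr),
\]
and in particular $\phi(x,y)-y = f_x^{-1}(\xi(f_x(y))) - f_x^{-1}(f_x(y))$.

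\textbf{Combining the estimates.} Since $j^{\nu}\xi = \mathrm{id}$ (by \eqref{metro} and $j^{\nu}{\mathfrak h}_0 = j^{\nu}{\mathfrak h}$), Taylor's theorem gives a constant $C_1>0$ such that $|\xi(w)-w|\leq C_1 |w|^{\nu+1}$ on a ball around $0$. From Lemma \ref{lem:hol}, taking $v=0$, we have $|f_x(y)|\leq c_0 |x|^{\delta_0}|y|$, and the lower bound in \eqref{simon} translates to the Lipschitz estimate
\[
|f_x^{-1}(u)-f_x^{-1}(v)| \leq c_1^{-1}|x|^{-\delta_1}|u-v|.
\]
Choosing $\delta>0$ small enough that, for $|y|<\delta$ and $x\in \widehat{\mathbb D}(2)$, the points $f_x(y)$ lie in the domain of $\xi$ and $\xi(f_x(y))$ lies in the domain of $f_x^{-1}$ (this is possible since $|f_x(y)|\leq c_0\cdot 2^{\delta_0}\delta$), we chain the three inequalities:
\[
|\phi(x,y)-y| \leq c_1^{-1}|x|^{-\delta_1}\,|\xi(f_x(y))-f_x(y)| \leq c_1^{-1}C_1 |x|^{-\delta_1}|f_x(y)|^{\nu+1} \leq C|x|^{\delta_0(\nu+1)-\delta_1}|y|^{\nu+1}
\]
with $C := c_1^{-1}C_1 c_0^{\nu+1}$. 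The same computation with $\xi^{-1}$ (which also satisfies $j^{\nu}\xi^{-1}=\mathrm{id}$, hence an analogous bound $|\xi^{-1}(w)-w|\leq C_1'|w|^{\nu+1}$) yields the estimate for $\phi_{-1}$.

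\textbf{Well-definedness and the main obstacle.} The main subtlety is not the estimate itself but the verification that $\Phi$ and $\Phi^{-1}$ are defined on $\widehat{\mathbb D}(2)\times\{|y|<\delta\}$. One has to pick $\delta>0$ uniformly in $x\in\widehat{\mathbb D}(2)$, which is exactly what Lemma \ref{lem:hol} provides: $f_x$ is defined on $\{|y|<r\}$ for all such $x$ and maps this ball into a ball of radius $c_0\, 2^{\delta_0} r$. Shrinking $r$ so that this image lies in a common neighborhood where $\xi$ and $\xi^{-1}$ are defined, and where in turn $f_x^{-1}$ can be evaluated (using that $|\xi(f_x(y))|\leq |f_x(y)| + C_1|f_x(y)|^{\nu+1}$ is controlled by the upper bound in \eqref{simon}), makes every composition in \eqref{equ:phi} and \eqref{phi2} meaningful. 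Once this uniform choice of $\delta$ is made, the estimate above closes the proof, and the identities \eqref{phi-1} follow from the symmetric definitions via $\xi$ and $\xi^{-1}$.
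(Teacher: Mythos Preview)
Your chain of inequalities for the estimate is exactly right and matches the paper's computation. The gap is in the well-definedness step, which you flag as ``the main obstacle'' but then dispatch too quickly.

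You need $\xi(f_x(y))$ to lie in the domain of $f_x^{-1}$, i.e.\ in the image $f_x(B(r))$. Your parenthetical argues that $|\xi(f_x(y))|$ is controlled by the upper bound in \eqref{simon}, but a bound on the \emph{norm} does not place the point in the image. Since $f_x$ contracts by a factor of order $|x|^{\delta_0}$ (set $v=0$ in \eqref{simon}), the image $f_x(B(r))$ only contains a ball about the origin of radius roughly $c_1|x|^{\delta_1}r$, which shrinks to zero as $|x|\to 0$. Requiring $|\xi(f_x(y))|<c_1|x|^{\delta_1}r$ forces, via $|f_x(y)|\leq c_0|x|^{\delta_0}|y|$, a bound like $|y|\lesssim |x|^{\delta_1-\delta_0}r$, which is \emph{not} uniform in $x$ because $\delta_1>\delta_0$. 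So no single $\delta>0$ works this way.

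The fix (and the paper's argument) is to compare $\xi(f_x(y))$ not to the origin but to $f_x(y)$ itself. By the lower Lipschitz bound in \eqref{simon} and a linking-number argument, $f_x$ applied to a ball of radius $(1-\sigma)r$ \emph{centred at $y$} covers a ball of radius $c_1|x|^{\delta_1}(1-\sigma)r$ centred at $f_x(y)$. Then one checks
\[
\frac{|\xi(f_x(y))-f_x(y)|}{c_1|x|^{\delta_1}(1-\sigma)r}
\;\lesssim\;
\frac{|x|^{\delta_0(\nu+1)}|y|^{\nu+1}}{|x|^{\delta_1}r}
\;=\;
\frac{|x|^{\delta_0(\nu+1)-\delta_1}|y|^{\nu+1}}{r},
\]
and now the exponent $\delta_0(\nu+1)-\delta_1\geq 4>0$ (the very choice of $\nu$) makes the right-hand side bounded uniformly in $x\in\widehat{\mathbb D}(2)$, so a single small $\delta$ suffices. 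Once this is in place, your Lipschitz chain for the estimate goes through verbatim.
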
 
\proof 
Consider the notations in Lemma \ref{lem:hol}.
By reducing $r$ we can assume that  $\{|y|<c_0 2^{\delta_0} r\}$ is contained in the domain of $\xi$.
Take $\delta=\sigma r$, where $\sigma\in (0,1)$ will be defined later.
 Suppose that $x_0\in\widehat{\mathbb D}(2)$
and $|y_0|<\delta$. Since $\delta<r$, from Lemma \ref{lem:hol} we have that
$f_{x_0}(y_0)$ is defined  and $|f_{x_0}(y_0)|\le c_0 |x_0|^{\delta_0}|y_0|<c_0 2^{\delta_0} r$, whence $\xi\circ f_{x_0}(y_0)$ is defined. In order to prove that $\phi(x_0,y_0)$ is defined
it suffices to show that $\xi\circ f_{x_0}(y_0)$ is in the image of $f_{x_0}$,  in this case we have 
$\phi(x_0,y_0)=f_{x_0}^{-1}\circ\xi\circ f_{x_0}(y_0)$. Let $\mathcal B\subset\mathbb C^n$ be the open ball of radius $(1-\sigma)r$ centered at $y_0$. Since $\overline{\mathcal B}\subset\{|y|<r\}$,
from \eqref{simon} we have 
$$|f_{x_0}(\zeta)- f_{x_0}(y_0)|\ge r_0\colon =c_1|x_0|^{\delta_1}(1-\sigma)r,\quad \zeta\in\partial 
\mathcal B.$$
Then $f_{x_0}(\partial\mathcal B )$ is disjoint of the ball $B_0\colon =\{|y- f_{x_0}(y_0) |<r_0\}$ and so
the linking number of $f_{x_0}(\partial\mathcal B )$ with any point  in $B_0$ is constant. This number is equal to one, because the diffeomorphism $f_{x_0}|_ \mathcal B$ takes the value $ f_{x_0}(y_0)  \in B_0$. Then
$B_0\subset f_{x_0}(\mathcal B )$, so it is enough to prove that 
$\xi\circ f_{x_0}(y_0)\in B_0$ or, equivalently,   that $$\frac{|\xi\circ f_{x_0}(y_0)-f_{x_0}(y_0)|}{r_0}<1.$$ Since $j^{\nu} \xi \equiv \id$, we can assume that there is a
 constant $c_\xi>0$ such that
$|\xi(y)-y|\le c_\xi |y|^{\nu+1}$. Then 
\begin{align*}
\frac{|\xi\circ f_{x_0}(y_0)-f_{x_0}(y_0)|}{r_0} \le \frac{c_\xi|f_{x_0}(y_0)|^{\nu+1}}
{r_0}
\le
\frac{c_\xi  \left( c_0|x_0|^{\delta_0}|y_0|\right)^{\nu+1}}{c_1|x_0|^{\delta_1}(1-\sigma)r}\\
\le \frac{c_\xi c_0^{\nu+1}2^{(\nu+1)\delta_0-\delta_1}r^{\nu}\sigma^{\nu+1}}{c_1(1-\sigma)},
\end{align*}
which is lesser than $1$ if $\sigma$ is chosen small enough.
 Finally, let 
$x\in\widehat{\mathbb D}(2)$
and $|y|<\delta$. We have proved that $\phi(x,y)=f_x^{-1}\circ \xi \circ f_x (y)$ is well defined and  from the proof we have $\phi(x,y)\in\mathcal B$, whence  $|\phi(x,y)|<r$. Then, if we set  $u=\phi(x,y)$ and $v=y$ in
\eqref{simon} we obtain that
\begin{align*}
 c_1|x|^{\delta_1}|\phi(x,y)-y|\le |\xi \circ f_x (y)-f_x(y)|\le c_\xi |f_x(y)|^{\nu+1}
 \le c_\xi \left( c_0|x|^{\delta_0}|y|\right)^{\nu+1},
 \end{align*}
 that is,
 \begin{align*}
 |\phi(x,y)-y|
 \le c_1^{-1}c_\xi c_0^{\nu+1}|x|^{\delta_0(\nu+1)-\delta_0}|y|^{\nu+1}.
 \end{align*}
 The inequality for $|\phi_{-1}(x,y)-y|$ is obtained in a similar way.  \qed

 \section{A special class of functions}\label{secspecial}
  Let $-\pi \leq a < b \le \pi$,  $r_1,r_2\in (0,+\infty]$ and $n\in\mathbb{Z}_{\ge 0}$, and consider the set 
  $$V(a,b,r_1,r_2,n)\colon=\left\{(x,y)\in\mathbb{C}^*\times\mathbb{C}^n\colon a<\arg (x)<b, |x|<r_1, |y|<r_2\right\}$$
 if $a \neq -\pi$ or $b \neq \pi$ and
 $$V(-\pi,\pi,r_1,r_2,n)\colon=\left\{(x,y)\in\mathbb{C}^*\times\mathbb{C}^n\colon |x|<r_1, |y|<r_2\right\}.$$
\begin{defn} \label{definition} Let
$\alpha\in\mathbb{R}$, $p\in\mathbb{Z}_{\ge 0}$  and consider a $C^\infty$ map
$$f\colon V(a,b,r_1,r_2,n)\to \mathbb{R}^m,\quad m\in\mathbb{N}.$$  We say that $f$ is of order $(\alpha, p)$ (at the origin) 
and we denote $f \in O_{\alpha, p}$ if, 
given $k\in\mathbb{Z}_{0 \le k \le p}$ and given $g$ a partial
 derivative of order $k$ of $f$ , the function
 $$|x|^{k-\alpha}g(x,y)$$ is bounded near the origin of $\mathbb{C}^{n+1}$. 
 We say that $f$ is of order $\alpha$, and we denote $f \in O_{\alpha}$,  if it is of order $(\alpha, p)$ for any $p \in {\mathbb N}$.
 \end{defn}
   \begin{pro} \label{clase1}If $f\colon V(a,b,r_1,r_2,n)\to \mathbb{R}^m$ is
  of order $(\alpha, p)$, then 
  \begin{enumerate}
  \item \label{clase1a} $f$ is of order $(\alpha' , p')$ whenever $\alpha'\le\alpha$ and $p' \le p$; and
  \item \label{clase1b}the partial derivatives  of $f$ are  of order $(\alpha-1 , p-1)$ if $p \geq 1$.
  \end{enumerate}
  \end{pro}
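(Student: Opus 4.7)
The plan is to unwind Definition \ref{definition} directly in both cases; there are no conceptual obstacles here, only bookkeeping, so the ``hard part'' is really just writing down the correct exponents.

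For item \eqref{clase1a}, I would start with an arbitrary $k$ with $0\le k\le p'$ and an arbitrary partial derivative $g$ of $f$ of order $k$. Since $p'\le p$, the hypothesis that $f\in O_{\alpha,p}$ tells us $|x|^{k-\alpha}g(x,y)$ is bounded near the origin. To get the conclusion I need $|x|^{k-\alpha'}g(x,y)$ to be bounded, which follows from the factorisation
\[
 |x|^{k-\alpha'}g(x,y) \;=\; |x|^{\alpha-\alpha'}\cdot |x|^{k-\alpha}g(x,y),
\]
because $\alpha-\alpha'\ge 0$ forces $|x|^{\alpha-\alpha'}$ to be bounded on a neighbourhood of the origin (we may shrink $r_1$ if needed so that $|x|<1$). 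This shows $f\in O_{\alpha',p'}$.

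For item \eqref{clase1b}, let $h$ denote any first-order partial derivative of $f$ (with respect to any of the real coordinates on $V(a,b,r_1,r_2,n)$, viewed as an open subset of $\mathbb{R}^{2(n+1)}$). Pick $k$ with $0\le k\le p-1$ and let $g$ be a $k$-th partial derivative of $h$. Then $g$ is a $(k+1)$-th partial derivative of $f$ and $k+1\le p$, so by the hypothesis $|x|^{(k+1)-\alpha}g(x,y)$ is bounded near the origin. Rewriting $(k+1)-\alpha = k-(\alpha-1)$, this is exactly the condition required for $h\in O_{\alpha-1,p-1}$. Applying this to each first-order partial derivative of $f$ finishes the argument.
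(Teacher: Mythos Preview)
Your proof is correct and is exactly the kind of direct unwinding of Definition \ref{definition} the paper has in mind; indeed, the paper does not give a proof at all, saying only that the proposition is a direct consequence of the definition and leaving it to the reader. Your argument fills in precisely those details.
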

  The proposition is a direct consequence of the definition above, so we leave the proof to the reader.

   \begin{pro} \label{clase11}Let $f_1,f_2\colon V(a,b,r_1,r_2,n)\to \mathbb{R}^{m}$ be of 
   order $(\alpha_1, p_1)$ and $(\alpha_2, p_2)$, respectively, and set $\alpha = \min (\alpha_1, \alpha_2)$ and $p = \min (p_1, p_2)$.
   \begin{enumerate}
   \item\label{itemsum} Then $f_1+f_2$ is of order $(\alpha , p)$;
   \item\label{itemprod} If $m=1$, then   $f_1f_2$ is  of order $(\alpha_1+\alpha_2 , p)$;
    \item\label{itemquot} If $m=1$ and $\alpha_2 > 0 $ then   $f_1/(1 + f_2)$ is  of order $(\alpha_1 , p)$.
  \end{enumerate}

  \end{pro}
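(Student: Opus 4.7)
The plan is to treat the three items in order of increasing difficulty: (\ref{itemsum}) and (\ref{itemprod}) are straightforward applications of additivity and the Leibniz rule respectively, while (\ref{itemquot}) is the only one requiring a genuine argument. In each case the derivatives in Definition \ref{definition} are taken with respect to the underlying real coordinates on $V(a,b,r_1,r_2,n)$, so ordinary calculus rules apply.

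For (\ref{itemsum}), I would first invoke Proposition \ref{clase1}(\ref{clase1a}) to promote both $f_1$ and $f_2$ to the common order $(\alpha,p)$. Any partial derivative of order $k\le p$ of $f_1+f_2$ is the sum of the corresponding derivatives of $f_1$ and $f_2$, so multiplication by $|x|^{k-\alpha}$ preserves boundedness by the triangle inequality.

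For (\ref{itemprod}), a partial derivative of order $k\le p$ of $f_1f_2$ decomposes by Leibniz as a finite linear combination of products $(\partial^{\beta_1}f_1)(\partial^{\beta_2}f_2)$ with $|\beta_1|+|\beta_2|=k$. Writing $|x|^{k-(\alpha_1+\alpha_2)}=|x|^{|\beta_1|-\alpha_1}\cdot|x|^{|\beta_2|-\alpha_2}$, each summand becomes a product of two factors bounded near the origin, since $|\beta_j|\le k\le p\le p_j$ and $f_j$ has order $(\alpha_j,p_j)$.

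For (\ref{itemquot}), the strategy is to show first that $h:=1/(1+f_2)$ belongs to $O_{0,p}$, and then to apply (\ref{itemprod}) to the pair $f_1$, $h$. Since $\alpha_2>0$ and $f_2\in O_{\alpha_2,0}$ is in particular continuous at the origin with $f_2\to 0$, the estimate $|1+f_2|\ge 1/2$ holds on some neighborhood of the origin, so $h$ is smooth there. For the derivatives I would establish by induction on $k=|\beta|\le p$ a combinatorial identity of the form
\begin{equation*}
\partial^{\beta}h=\sum_{s=1}^{k}\frac{P_{\beta,s}(\partial^{\gamma_1}f_2,\hdots,\partial^{\gamma_s}f_2)}{(1+f_2)^{s+1}},
\end{equation*}
where each $P_{\beta,s}$ is a universal polynomial and the multi-indices in each summand satisfy $|\gamma_i|\ge 1$ and $|\gamma_1|+\hdots+|\gamma_s|=k$. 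The base case $k=0$ is trivial, and the inductive step just differentiates a generic summand, producing either a term of the same shape with one $|\gamma_i|$ increased by one, or a new term with an additional factor $\partial_i f_2$ and denominator $(1+f_2)^{s+2}$. Multiplying by $|x|^{k}$ and splitting the exponent as $|x|^{|\gamma_1|+\hdots+|\gamma_s|}$, each summand is controlled by a constant times $|x|^{s\alpha_2}$, which is bounded since $s\ge 1$ and $\alpha_2>0$; hence $h\in O_{0,p}$. The main obstacle is verifying this inductive identity; once it is in hand, all order estimates reduce to the bookkeeping just sketched.
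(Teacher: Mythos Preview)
Your argument is correct. The treatment of (\ref{itemsum}) and (\ref{itemprod}) matches the paper in substance, though the paper proves (\ref{itemprod}) by induction on $p$ (using only the one-step Leibniz rule $(f_1f_2)'=f_1'f_2+f_1f_2'$ together with Proposition~\ref{clase1}) rather than invoking the full higher-order Leibniz expansion as you do. For (\ref{itemquot}) both you and the paper first reduce to showing $1/(1+f_2)\in O_{0,p}$ and then apply (\ref{itemprod}); the difference is in how that reduction is established. You write down an explicit Fa\`a di Bruno--type identity for $\partial^\beta(1/(1+f_2))$ and bound each monomial directly. The paper instead argues by induction on $p$: a single partial derivative of $1/(1+g)$ is $-g'/(1+g)^2$, and the inductive hypothesis together with the already-proved (\ref{itemprod}) places this in $O_{\beta-1,k}\subset O_{-1,k}$, which suffices. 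The paper's route sidesteps the combinatorics of the higher-derivative formula at the cost of relying on (\ref{itemprod}) inside the induction; your route is more self-contained but requires carrying the Fa\`a di Bruno bookkeeping.
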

 
 \begin{proof}

The assertion \eqref{itemsum} is easily verified, so we only deal with \eqref{itemprod} and  \eqref{itemquot}. We proceed by induction on $p$. It is easy to see that \eqref{itemprod} holds true if $p=0$. Suppose that \eqref{itemprod} holds for $p=k\in\mathbb{Z}_{\ge 0}$. Let $f_1,f_2\colon V\to\mathbb{R}$ be respectively of 
order $(\alpha_1,p_1)$ and  $(\alpha_2,p_2)$, such that $p=k+1$. 
Since $f_1f_2$ is obviously of order $(\alpha_1+\alpha_2,0)$, in order to prove that $f_1f_2$ is of order $(\alpha_1+\alpha_2,k+1)$ it suffices to show that each partial derivative of  $f_1f_2$ is of order $(\alpha_1+\alpha_2-1,k)$.
Let $g$ be a partial derivative of $f_1f_2$. Then $g=f_1'f_2+f_1f_2'$, where $f_i'$ is a partial derivative of $f_i$ for $i=1,2$. By proposition \ref{clase1} we have
$f_1'\in O_{\alpha_1-1,p_1-1}$ and $f_2\in O_{\alpha_2,p_2-1}$ and so, by the inductive hypothesis, $f_1'f_2\in O_{\alpha_1+\alpha_2-1,k}$. In the same way 
 $f_1f_2'\in O_{\alpha_1+\alpha_2-1,k}$  and it follows from assertion \eqref{itemsum} that
$g\in O_{\alpha_1+\alpha_2-1,k}$. Now, in view of assertion \eqref{itemprod},
in order to prove \eqref{itemquot} it suffices to show that $1/(1+f_2)$ is of order
$(0,p_2)$ if $\alpha_2>0$. This is proved by induction. It is obviously true that $1/(1+g)\in O_{0,p}$  if
 $g\in O_{\beta,p}$, $\beta>0$ and $p=0$.  Suppose that $1/(1+g)\in O_{0,p}$  if
 $g\in O_{\beta,p}$, $\beta>0$ and $p=k\in\mathbb{Z}_{\ge 0}$. 
 Consider any  $g\in O_{\beta,k+1}$ with $\beta>0$. Then $1/(1+g)\in O_{0,0}$ and so, in order to get  $1/(1+g)\in O_{0,k+1}$,  it suffices to show that each partial derivative of $1/(1+g)$ is of order
 $(-1,k)$. A partial derivative  of $1/(1+g)$ has the form $h=-g'/(1+g)^2$, where
 $g'$ is a partial derivative of $g$. Since $g\in  O_{\beta,k}$, by the inductive hypothesis we have $1/(1+g)\in  O_{0,k}$, whence $1/(1+g)^2\in  O_{0,k}$. Then,
 since $g'\in O_{\beta-1,k}$, it follows from \eqref{itemprod} that
  $h\in O_{\beta-1,k}\subset  O_{-1,k}$. 
\end{proof}
\begin{pro}
\label{pro:ck_extension}
Consider a $C^\infty$ map
$$f\colon V(-\pi,\pi,r_1,r_2,n)\to \mathbb{R}^m,\quad m\in\mathbb{N}$$ 
of class $(\alpha, p)$ with $\alpha>0$. Let $k \in {\mathbb Z}_{\geq 0}$ with $k < \alpha$ and $k \leq p$. 
Then $f$ extends to a $C^{k}$ function in some set $\{ |x| < r_{1}' \} \times \{ |y| < r_{2}' \}$ whose derivatives up to order $k$
vanish on $\{x=0\}$.
\end{pro}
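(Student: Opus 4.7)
The plan is to define $\tilde{f}$ by $\tilde{f}\equiv 0$ on $E=\{x=0\}$ and $\tilde{f}=f$ elsewhere, and to prove by induction on $k$ the statement $P(k)$: \emph{if $f\in O_{\alpha,p}$ with $k<\alpha$ and $k\le p$, then $\tilde{f}$ is $C^k$ on some $\{|x|<r_1'\}\times\{|y|<r_2'\}$ with all partial derivatives of order $\le k$ vanishing on $E$.} The base case $P(0)$ is immediate: $f\in O_{\alpha,0}$ with $\alpha>0$ yields $|f(x,y)|\le C|x|^\alpha$ near the origin, so $\tilde{f}$ is continuous and vanishes on $E$.

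The engine of the induction is the following key lemma, which I would prove first. \emph{Let $g$ be $C^\infty$ on $V(-\pi,\pi,r_1,r_2,n)$; suppose that both $g$ and each first-order partial derivative $\partial g/\partial x_i$ (with respect to the $2(n+1)$ real coordinates on $\mathbb{C}^{n+1}$) extend continuously to $\{|x|<r_1\}\times\{|y|<r_2\}$ by $0$ on $E$. Then the extension $\tilde{g}$ is $C^1$, and its first partial derivatives equal those continuous extensions.} The only verification needed is at a point $(0,y_0)\in E$. The difference quotient of $\tilde{g}$ in a $y$-direction is identically zero because $\tilde{g}\equiv 0$ along $E$. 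For a difference quotient in an $x$-direction $e_i$, the function $G(t):=\tilde{g}(te_i,y_0)$ is continuous on a real interval $[-h,h]$ and $C^1$ off of $0$, with $G'$ extending continuously through $0$ by $0$. Applying the fundamental theorem of calculus on $[\epsilon,t]$ for $0<\epsilon<t$ and letting $\epsilon\to 0^+$ yields $G(t)-G(0)=\int_0^t G'(u)\,du$, so $[G(t)-G(0)]/t\to G'(0)=0$ as $t\to 0^+$ by continuity of $G'$; the case $t<0$ is symmetric.

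For the inductive step, assume $P(k-1)$ and let $f\in O_{\alpha,p}$ with $1\le k<\alpha$ and $k\le p$. By Proposition \ref{clase1}(\ref{clase1b}), each first partial derivative $\partial f/\partial x_i$ lies in $O_{\alpha-1,p-1}$; since $k-1<\alpha-1$ and $k-1\le p-1$, the hypothesis $P(k-1)$ gives a $C^{k-1}$ extension $\tilde{g}_i$ of $\partial f/\partial x_i$ whose derivatives of order $\le k-1$ vanish on $E$. In particular every $\partial f/\partial x_i$ extends continuously by $0$ on $E$, and $\alpha>0$ yields the same for $f$ itself, so the key lemma applies to $f$ and shows that $\tilde{f}$ is $C^1$ with $\partial\tilde{f}/\partial x_i=\tilde{g}_i$. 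Since each $\tilde{g}_i$ is $C^{k-1}$, we conclude $\tilde{f}$ is $C^k$; every derivative of $\tilde{f}$ of order $j$ with $1\le j\le k$ is a derivative of some $\tilde{g}_i$ of order $j-1\le k-1$, which vanishes on $E$, while $\tilde{f}$ itself vanishes on $E$ by construction. The main obstacle is the key lemma, whose proof is not deep but requires the order condition $\beta>0$ in an essential way: it ensures continuity of $\tilde{g}$ at $E$, without which the interior FTC argument would not carry over to the boundary face.
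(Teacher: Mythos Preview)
Your proof is correct and follows essentially the same strategy as the paper. The paper also reduces to a $C^1$ extension lemma (Lemma~\ref{lemac1}: if a $C^1$ map on $U\setminus M$ and all its first partials extend continuously across a codimension-$\ge 1$ submanifold $M$, then the extension is $C^1$), and then iterates this observation, noting that every derivative of $f$ of order $\le k$ extends continuously by zero since $k<\alpha$. Your ``key lemma'' is the special case $M=E=\{x=0\}$ of the paper's lemma; you prove it with the fundamental theorem of calculus along rays in the $x$-directions, while the paper uses the mean value theorem along directions transverse to $M$, but these are interchangeable here. Your induction on $k$ simply makes explicit the iteration that the paper leaves implicit when it says the proposition is ``an immediate consequence'' of the $C^1$ lemma.
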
 
Notice that any derivative $g$ of $f$ up to order $k$ extend  to $\{x=0\}$ continuously  by defining $g(0, y) \equiv 0$ since $f \in O_{\alpha}$ and $\alpha>k$. 
Thus, the proposition is an immediate consequence of the following lemma. 
\begin{lem} \label{lemac1}Let $U$ be an open subset of $\mathbb{R}^m$ and let $M\subset U$ be 
a proper embedded $C^1$ manifold of codimension $\ge 1$. 
Let $f\colon U\backslash M\to\mathbb{R}^n$ be of  class $C^1$. Suppose that $f$ and its partial derivatives extend continuously to $U$. Then $f$ extends to $U$ in the class $C^1$. 
\end{lem}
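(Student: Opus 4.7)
Let $\tilde f:U\to\mathbb{R}^n$ denote the continuous extension of $f$ and $\tilde g_i$ the continuous extension of $\partial f/\partial x_i$, which exist by hypothesis. The goal is to prove that at each point $p\in M$ the partial derivative $\partial\tilde f/\partial x_i(p)$ exists and equals $\tilde g_i(p)$; since the $\tilde g_i$ are continuous on $U$, this will yield $\tilde f\in C^1(U)$. The question is local, so I fix $p\in M$ and work in a small neighborhood. Applying the $C^1$ straightening theorem for $C^1$ embedded submanifolds (a standard consequence of the inverse function theorem), I choose a $C^1$ diffeomorphism $\Phi$ from a neighborhood of $p$ onto an open set of $\mathbb{R}^m$ sending $M$ to the linear subspace $L=\{x_1=\cdots=x_k=0\}$, where $k=\codim M\ge 1$. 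The chain rule expresses the partial derivatives of $f\circ\Phi^{-1}$ as linear combinations, with continuous coefficients (the entries of $D\Phi^{-1}$), of the functions $\partial f/\partial x_j\circ\Phi^{-1}$, so these partials also admit continuous extensions. Hence it suffices to establish the lemma under the extra assumption $M=L$, the general case following by pullback along $\Phi$.

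Assume now $M=L$. I split into two cases according to the direction $e_i$. If $i\le k$ (transverse to $M$), then the segment $t\mapsto p+te_i$ meets $M$ only at $t=0$, its $i$th coordinate being $t$. Thus $f$ is $C^1$ on an open neighborhood of the punctured segment, and the fundamental theorem of calculus gives
\begin{equation*}
f(p+te_i)-f(p+se_i)=\int_s^t g_i(p+ue_i)\,du
\end{equation*}
for $s,t$ of the same sign with $|s|,|t|$ small. Letting $s\to 0$ and invoking continuity of $\tilde f$ and $\tilde g_i$ yields $\tilde f(p+te_i)-\tilde f(p)=\int_0^t \tilde g_i(p+ue_i)\,du$, so $\partial\tilde f/\partial x_i(p)=\tilde g_i(p)$ as wanted.

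If $i>k$ (tangent to $M$), then the line through $p$ in direction $e_i$ lies entirely inside $M$, giving no direct information about $f$. Here I would perturb: for small $\epsilon>0$, the parallel line $t\mapsto p+\epsilon e_1+te_i$ has first coordinate $\epsilon\ne 0$ and therefore avoids $M$ altogether. Since $f$ is $C^1$ along it, the fundamental theorem of calculus produces
\begin{equation*}
f(p+\epsilon e_1+te_i)-f(p+\epsilon e_1)=\int_0^t g_i(p+\epsilon e_1+ue_i)\,du.
\end{equation*}
Letting $\epsilon\to 0^+$, the left-hand side converges to $\tilde f(p+te_i)-\tilde f(p)$ by continuity of $\tilde f$, while the integrand converges uniformly in $u\in[0,t]$ to $\tilde g_i(p+ue_i)$ by continuity of $\tilde g_i$ on a compact neighborhood of $p$. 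The same integral identity therefore holds for $\tilde f$ and $\tilde g_i$, and again $\partial\tilde f/\partial x_i(p)=\tilde g_i(p)$.

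The main (mild) obstacle is the tangential case, where no line through $p$ in direction $e_i$ meets $U\setminus M$; the key device is the translation by $\epsilon e_1$, available precisely because $k\ge 1$ supplies a transverse coordinate. Everything else is a direct application of the fundamental theorem of calculus combined with continuity of the prescribed extensions.
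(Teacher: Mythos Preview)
Your proof is correct, but the paper takes a shorter route that avoids the case split altogether. Rather than straightening $M$ to a coordinate subspace (which forces the distinction between transverse and tangential directions and the perturbation argument in the latter case), the paper simply chooses an \emph{affine} coordinate system near $p$ in which \emph{all} basis vectors $e_1,\dots,e_m$ are transverse to $M$ at $p$; this is possible because $T_pM$ is a proper subspace. By continuity of the tangent space, transversality persists at nearby $q\in M$, so every short segment $[q,q+te_j]$ meets $M$ only at $q$. The Mean Value Theorem, applied componentwise, then expresses each difference quotient as a value of $\partial f/\partial x_j$ at an interior point, and letting $t\to 0$ finishes. This ``tilted frame'' device buys a uniform one-case argument with no perturbation step. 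Your approach, while longer, is perfectly sound and arguably more systematic; the integral formula you obtain via the fundamental theorem of calculus is slightly more informative than the bare existence of the derivative, and your method would transfer more readily to situations (e.g.\ higher-order or weighted derivatives) where the MVT shortcut is less convenient.
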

\proof 
Let $\bar{f}\colon U\to\mathbb{R}^n$ be the extension of $f$ to $U$. In view of the hypotheses, it
 is enough to prove that, given $p\in M$, we can find a $C^1$ coordinate system 
 $(x_1,\dots, x_m)$ around $p$ such that the partial derivatives of $\bar{f}$ at $p$ exist and are continuous in a neighborhood of $p$.
 Consider affine coordinates such that the canonical unitary vectors
 $e_1,\dots,e_m$ are transverse to $M$ at $p$, and fix $j\in\{1,\dots,m\}$ and $q \in M$ in a neighborhood of $p$.  It suffices to show
\[  \frac{\partial\bar{f}}{\partial x_j}(q)=\lim\limits_{x\rightarrow q}\frac{\partial {f}}{\partial x_j}(x),\quad j=1,\dots,m. \]    
Since $e_j$ is transverse to $M$ at $q$, for $t\in\mathbb{R}^*$ small enough the euclidean segment $[q,q+te_j]$ intersects $M$ only at $q$. 
Thus, if we set $f=(f_1,\dots,f_m)$ and  $\bar{f}=(\bar{f}_1,\dots,\bar{f}_m)$ , 
 by the Mean Value Theorem we have 
 \begin{align*}\frac {\bar{f}(q+te_j)-\bar{f}(q)}{t}&=
 \left(\frac {\bar{f}_1(q+te_j)-\bar{f}_1(q)}{t},\dots, \frac {\bar{f}_m(q+te_j)-\bar{f}_m(q)}{t}\right)\\
 &= \left( \frac{\partial {f}_1}{\partial x_j}(w_1) ,\dots, \frac{\partial {f}_m}{\partial x_j}(w_m)\right),
 \end{align*} where $w_1,\dots, w_m$ are points in the open segment  $$(q,q+te_j)\subset U\backslash M.$$ Then, since the points $w_1,\dots, w_m$ tend to $q$ as $t$ tends to $0$, we have 
 \begin{align*}\lim\limits_{t\rightarrow 0}\frac {\bar{f}(q+te_j)-\bar{f}(q)}{t}&=
 \left(\lim\limits_{x\rightarrow q}\frac{\partial {f}_1}{\partial x_j}(x),\dots, \lim\limits_{x\rightarrow q}\frac{\partial {f}_m}{\partial x_j}(x)\right)\\
 &=\lim\limits_{x\rightarrow q}\frac{\partial {f}}{\partial x_j}(x),
 \end{align*}
 completing the proof.
 \qed
  
\begin{pro} \label{margot}Let 
$$f\colon V(a,b,r_1,r_2,n)\to \mathbb{C}^m$$ be holomorphic such that, for some $\alpha\in\mathbb{R}$,
 the function 
 $|x|^{-\alpha}|f(x,y)|$ is bounded near the origin. Then, if 
 \begin{itemize}
 \item  $[a',b']\subset (a,b)$  or
 \item $a'=a=-\pi$ and $b'=b=\pi$,
 \end{itemize}
 the restriction of $f$
 to $V(a',b',r_1,r_2,n)$ is of order $\alpha$.  
\end{pro}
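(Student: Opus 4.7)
The plan is to deduce the required estimates on the partial derivatives of $f$ from its growth bound $|f(x,y)| \leq C|x|^{\alpha}$ by applying the standard Cauchy estimates on suitably chosen polydiscs whose $x$-radius is proportional to $|x|$.

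First, for each point $(x_0, y_0) \in V(a', b', r_1/2, r_2/2, n)$, I would construct a polydisc
\[
P(x_0, y_0) = \Delta(x_0, \rho|x_0|) \times \Delta(y_{0,1}, r) \times \cdots \times \Delta(y_{0,n}, r)
\]
entirely contained in $V(a, b, r_1, r_2, n)$, with $\rho > 0$ and $r > 0$ constants independent of $(x_0, y_0)$. In the case $[a',b'] \subset (a,b)$, the euclidean distance from $x_0$ to each of the bounding rays $\arg x = a, b$ is at least $|x_0|\sin\bigl(\min(a'-a,\, b-b')\bigr)$, so any choice $\rho \leq \tfrac{1}{2}\sin\bigl(\min(a'-a,\, b-b')\bigr)$ produces a disc of radius $\rho|x_0|$ that stays strictly inside the open sector; further shrinking to $\rho|x_0| \leq r_1/2$ keeps the disc inside $\{|x|<r_1\}$. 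In the case $a=a'=-\pi$, $b=b'=\pi$, the set $V(-\pi,\pi,r_1,r_2,n)$ excludes only the origin, so the choice $\rho = 1/2$ works directly. For the $y$-variables, $r = r_2/2$ suffices provided $|y_0|<r_2/2$.

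Next, I would apply the Cauchy estimate on $P(x_0, y_0)$, componentwise in $f$: for a partial derivative of total order $k$ involving $k_x$ differentiations in $x$ and $k-k_x$ in the $y$ variables,
\[
\left|\frac{\partial^{k} f}{\partial x^{k_x}\partial y^{k-k_x}}(x_0, y_0)\right| \leq \frac{k_x!\, (k-k_x)!\, M}{(\rho|x_0|)^{k_x}\, r^{k-k_x}},
\]
where $M = \sup_{P(x_0, y_0)} |f|$. On $P(x_0, y_0)$ one has $|x| \in [(1-\rho)|x_0|,\, (1+\rho)|x_0|]$, so the hypothesis gives $M \leq C'|x_0|^{\alpha}$ after absorbing the factor $(1\pm\rho)^{\alpha}$ into the constant. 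Combining, any order-$k$ partial derivative $g$ of $f$ satisfies $|g(x_0, y_0)| \leq C'' |x_0|^{\alpha - k_x}$, and therefore
\[
|x_0|^{k-\alpha}|g(x_0, y_0)| \leq C'' |x_0|^{k-k_x} = C''|x_0|^{k-k_x},
\]
which is bounded as $(x_0,y_0)\to 0$ in the restricted sector (and in fact tends to $0$ whenever some $y$-derivative is taken). Since this holds for every $k$ and every derivative of order $k$, the restriction of $f$ to $V(a',b',r_1/2,r_2/2,n)$ lies in $O_{\alpha,p}$ for every $p$, hence in $O_\alpha$.

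The only nontrivial ingredient is the geometric construction of the polydisc in Case 1, which reduces to the elementary observation that the distance from a point in the cone $\{\arg x \in [a',b']\}$ to the boundary rays of the larger cone scales linearly with $|x|$. The remainder is routine Cauchy estimation, and Case 2 is a direct special case since the angular restriction is vacuous.
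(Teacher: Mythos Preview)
Your proof is correct and follows essentially the same approach as the paper: both arguments rest on the observation that a point in the smaller sector lies at distance at least $c|x|$ from the boundary of the larger domain, and then apply Cauchy estimates on a region of that scale to bound order-$k$ derivatives by $C|x|^{\alpha-k}$. The only cosmetic difference is that the paper works with Euclidean balls of radius $\delta(\mathfrak z)/2$ (where $\delta$ is the distance to $\partial V$) rather than polydiscs with separate $x$- and $y$-radii; your polydisc version yields the marginally sharper bound $|x|^{\alpha-k_x}$, but this extra precision is not needed for the conclusion.
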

\proof
Let us consider the first case, the second one is simpler.
Without loss of generality we can assume that there exists $c>0$ such that 
\begin{align}\label{mouse}|x|^{-\alpha}|f(x,y)|\le c\; \textrm{ on } \; V\colon=V(a,b,r_1,r_2,n).
\end{align}
Let $r>0$ such that 
\begin{align}\label{rrra} 2r<\min\{r_1,r_2\},
\end{align}
consider the set
$$U\colon =V(a',b',r,r,n)\subset V$$
and let $$\delta\colon U\to \partial V$$ be the distance to the boundary of $V$. 
 Given ${\mathfrak{z}}=({\mathfrak{x}},{\mathfrak{y}})\in U$, since
the point
$(0,{\mathfrak{y}})$ belongs to $\partial V$, we see that 
\begin{align} \label{di0} \delta({\mathfrak{z}})\le |{\mathfrak{x}}|<r.\end{align} moreover, we can find
${\mathfrak{z}}_{\scriptscriptstyle\partial}=({\mathfrak{x}}_{\scriptscriptstyle\partial},{\mathfrak{y}}_{\scriptscriptstyle\partial})\in \partial V$
 such that $$\delta({\mathfrak{z}})=|{\mathfrak{z}}_{\scriptscriptstyle\partial}-{\mathfrak{z}}|. $$ If $|{\mathfrak{x}}_{\scriptscriptstyle\partial}|=r_1$, using \eqref{rrra}
we have $$\delta({\mathfrak{z}})=|{\mathfrak{z}}_{\scriptscriptstyle\partial}-{\mathfrak{z}}|\ge |{\mathfrak{x}}_{\scriptscriptstyle\partial}-{\mathfrak{x}}|
\ge |{\mathfrak{x}}_{\scriptscriptstyle\partial}|-|{\mathfrak{x}}|\ge r_1-r>r,$$ which contradicts  \eqref{di0}. In the same way we see that $|{\mathfrak{y}}_{\scriptscriptstyle\partial}|\neq r_2$. Then it remain the following possibilities:
\begin{enumerate}
\item $\arg({\mathfrak{x}}_{\scriptscriptstyle\partial})=a$;
\item $\arg({\mathfrak{x}}_{\scriptscriptstyle\partial})=b$;
\item ${\mathfrak{x}}_{\scriptscriptstyle\partial}=0$.
\end{enumerate}
 Thus, since $\arg({\mathfrak{x}})\in (a',b')$, in the first case above we have 
 $$\delta({\mathfrak{z}})=|{\mathfrak{z}}_{\scriptscriptstyle\partial}-{\mathfrak{z}}|\ge |{\mathfrak{x}}_{\scriptscriptstyle\partial}-{\mathfrak{x}}|\ge|{\mathfrak{x}}|\Big|\sin \big(\arg({\mathfrak{x}}_{\scriptscriptstyle\partial})-\arg({\mathfrak{x}})\big)\Big|\ge |{\mathfrak{x}}||\sin (a-a')|.$$
 Analogously, in the second case we have 
 $$\delta({\mathfrak{z}})\ge |{\mathfrak{x}}||\sin (b-b')|.$$ Finally, in the third case we easily see that
 $$\delta({\mathfrak{z}})= |{\mathfrak{x}}|.$$ Therefore, if $$\epsilon\colon = 
 \min\Big\{|\sin (a-a')|, |\sin (b-b')|, 1\Big\},$$ we conclude that
 \begin{align}\label{minz}\delta({\mathfrak{z}})\ge \epsilon|{\mathfrak{x}}|,\quad {\mathfrak{z}}\in U.
 \end{align} 
 On the other hand, given ${\mathfrak{z}}\in U$, let $B_{\mathfrak{z}}\subset\mathbb{C}^{n+1}$ be the ball of radius $\delta({\mathfrak{z}})/2$ centered at ${\mathfrak{z}}$.
 By the definition of $\delta$ the ball $B_{\mathfrak{z}}$ is contained in $V$, so $f$ is defined on $B_{\mathfrak{z}}$ and from
 \eqref{mouse},
 \begin{align}\label{charge}|f(x,y)|\le c|x|^\alpha, \quad (x,y)\in B_\mathfrak{z}. \end{align}
If $(x,y)\in B_\mathfrak{z}$, using \eqref{di0} we see that
$$|x|\le |\mathfrak{x}|+\delta(\mathfrak{z})\le 2|\mathfrak{x}|,$$ so from \eqref{charge}
we obtain 
 \begin{align}\label{charge1}|f(x,y)|\le  2^\alpha c |\mathfrak{x}|^\alpha, \quad (x,y)\in B_\mathfrak{z}. \end{align} 
 Therefore, it follows from the Cauchy Derivative Estimates that, given $k\in\mathbb{N}$, 
 there exists a constant $c(k)>0$ depending only on $k$ such that for any 
  partial derivative $g$ of order $k$ of $f$,  
 \begin{align}\label{cde}|g(\mathfrak{z})|\le c(k) \frac{2^\alpha c|\mathfrak{x}|^\alpha}{(\delta(\mathfrak{z})/2)^k}.\end{align}
 Thus, from this and \eqref{minz} we obtain
  \begin{align}\label{cde1}|\mathfrak{x}|^{k-\alpha}|g(\mathfrak{z})|\le c(k) \frac{2^\alpha c}{(\epsilon/2)^k},\quad \mathfrak{z}\in U,\end{align} which shows that $f|_U$ has order $\alpha$. \qed

 \begin{lem}\label{argumento}Let $\rho\colon [a,b]\to\mathbb{R}$ be of class $C^\infty$, where 
   $[a,b]\subset [-\pi,\pi]$. Consider the function $f(z)=\rho (\arg(z))$ defined on the set 
   $$V=\{z\in\mathbb{C}\colon a<\arg(z)<b\}.$$ Then $f$ is of order $0$. 
 \end{lem}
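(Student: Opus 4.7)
The plan is to reduce everything to estimates on the real partial derivatives of $\arg z$ near the origin. The key observation is that on $V$ we have $\arg z=\im(\log z)$ for a holomorphic branch of $\log$: under the constraints on $a,b$, $V$ is contained in $\mathbb{C}\setminus(-\infty,0]$, so the principal branch works.

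Using that for any holomorphic $F$ on $V$ one has $\partial F/\partial x=F'(z)$ and $\partial F/\partial y=iF'(z)$, iteration yields
\[ \frac{\partial^{k}F}{\partial x^{j}\partial y^{k-j}}(z)=i^{k-j}F^{(k)}(z). \]
Applied to $F(z)=\log z$, whose $k$-th holomorphic derivative is $(-1)^{k-1}(k-1)!/z^{k}$, and taking imaginary parts, this shows that every real partial derivative of $\arg z$ of total order $k\ge 1$ is bounded in absolute value by $(k-1)!\,|z|^{-k}$ on $V$. Now, by the iterated chain rule (Fa\`a di Bruno), any partial derivative of $f=\rho\circ\arg$ of order $k\ge 1$ is a finite linear combination of terms of the form
\[ \rho^{(j)}(\arg z)\prod_{i=1}^{j}\partial^{\beta_{i}}\arg z,\qquad |\beta_{i}|\ge 1,\ \sum_{i=1}^{j}|\beta_{i}|=k. \]
Since $\rho\in C^{\infty}([a,b])$, each factor $\rho^{(j)}(\arg z)$ is uniformly bounded on $V$, while the product $\prod_{i}\partial^{\beta_{i}}\arg z$ is $O(|z|^{-k})$ by the previous estimate. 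Therefore any $k$-th order partial derivative $g$ of $f$ satisfies $|z|^{k}|g(z)|\le C_{k}$ on $V$; for $k=0$, $f$ itself is bounded because $\rho$ is continuous on $[a,b]$. By Definition \ref{definition}, this is precisely $f\in O_{0}$.

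The argument is essentially routine, and no serious obstacle arises. Its heart is the one-line observation $\arg z=\im\log z$, which reduces the task of bounding real partial derivatives of $\arg z$ to the trivial estimate $(\log z)^{(k)}=O(|z|^{-k})$; everything else is combinatorial bookkeeping via the chain rule, together with the compactness of $[a,b]$ that makes all derivatives of $\rho$ bounded along $\arg z$.
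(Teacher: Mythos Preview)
Your proof is correct and follows essentially the same route as the paper: first bound the $k$-th order real partial derivatives of $\arg z$ by $C_k|z|^{-k}$, then apply the multivariate chain rule (Fa\`a di Bruno) together with the boundedness of the derivatives of $\rho$ on $[a,b]$. The only cosmetic difference is that the paper obtains the derivative bounds on $\arg z$ by invoking Proposition~\ref{margot} applied to $1/z$ (i.e.\ via Cauchy estimates), whereas you compute them explicitly from $\arg z=\im(\log z)$ and $(\log z)^{(k)}=(-1)^{k-1}(k-1)!/z^{k}$; this makes your argument slightly more self-contained but is otherwise the same idea.
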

  \proof
 We prove first that $$\arg\colon V\to \mathbb{R}$$ is of order $0$. Since this function is bounded,
 it is enough to prove that its partial derivatives are of order $-1$. Thus, since  
 $$\frac{\partial\arg}{\partial x}(z)=\im({1}/{z}),\quad
 \frac{\partial\arg}{\partial y}(z)=\re({1}/{z}),\quad z=x+iy,$$
 it suffices to show that the function $1/z$ on $V$ is of order $-1$, which is a direct consequence of
 Proposition \ref{margot}.
 Let us prove that $f=\rho \circ \arg$ is of order $0$. Since $f$ is bounded, we only have to deal with
 the partial derivatives of order $k\in\mathbb{N}$ of $f$. Fix $k\in\mathbb{N}$ and let $g$ be a partial derivative of order $k$ of $f$. It is not difficult to see that $g$ can be expressed as finite summation 
  \begin{align}\label{sierva}g(z)=\sum\limits_{j}\rho^{(k_j)}{\big(\arg(z)\big)}u_{j1}u_{j2}\cdots u_{jk_j},
 \end{align} where
 \begin{enumerate}
 \item $k_j\in\{1,\dots,k\}$;
 \item $u_{jl}$ is a partial derivative of order $p_{jl}$ of $\arg(z)$, where $l=1,\dots,k_j$;
 \item $p_{j1}+p_{j2}+\cdots p_{jk_j}=k$.
 \end{enumerate}
  Since the function $\arg(z)$ is of order $0$, the function $|z|^{p_{jl}}u_{jl}$ is bounded near the origin for
  each $l=1,\dots, k_j$.
  Then $$\left(|z|^{p_{j1}}u_{j1}\right)\cdots \left(|z|^{p_{jk_j}}u_{jk_j}\right)$$ is
  bounded near the origin, that is, 
  $$|z|^{k}u_{j1}\cdots u_{jk_j}$$ is bounded near the origin, whence 
  $$|z|^{k}\rho^{(k_j)}{\big(\arg(z)\big)}u_{j1}\cdots u_{jk_j}$$ is bounded near the origin, because
  $\rho^{(k_j)}$ is bounded. Therefore, it follows from \eqref{sierva} that $|z|^{k}g(z)$ is bounded near the origin. We
  conclude that $f$ is of order $0$. \qed

Now, we define a special class of diffeomorphisms. 
\begin{defn}
Let $F:V(a,b,r_1,r_2,n)   \to W$ be a diffeomorphism between open sets of ${\mathbb C}^{n+1}$, and consider $\beta>1$ and $p \in {\mathbb Z}_{\geq 0}$.
We say that $F$ is a diffeomorphism of order $(\beta,p)$, and we denote $F \in D_{\beta, p}$,  
if $x \circ F \equiv x$ and $F - \mathrm{id}$ is of order
 $(\beta, p)$ in the sense of Definition \ref{definition}. We say that $F$ is of order $\beta$ and
we denote $F \in D_{\beta}$ if $F \in D_{\beta, p}$ for any $p \in {\mathbb N}$.
\end{defn}
We want to consider the composition of diffeomorphisms in $D_{\beta}$ with maps in $O_{\alpha}$.
\begin{pro} \label{pro:comp_O_D}
Consider 
\[ f: V(a,b,r_1,r_2,n)  \to {\mathbb R}^{m} \quad \mathrm{and} \quad  F:  V(a,b,r_{1}',r_{2}',n) \to W \]
that belong to $O_{\alpha,p}$ and $D_{\beta, p}$ respectively with $p \geq 1$.  Then, there exist $\tilde{r}_1, \tilde{r}_2 >0$.
such that $f \circ F$ is defined in $ V(a,b,\tilde{r}_1,\tilde{r}_2,n)$  and is of order $(\alpha, p)$. 
\end{pro}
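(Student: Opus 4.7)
The plan is to proceed by induction on $p \geq 1$, using the arithmetic rules for $O_{\alpha,p}$ established in Propositions \ref{clase1} and \ref{clase11}, together with the structural fact that $x \circ F \equiv x$. Throughout I write $F(x,y) = (x, y + R(x,y))$ with $R \in O_{\beta,p}$ and $\beta > 1$. For well-definedness, $R \in O_{\beta,0}$ gives $|R(x,y)| \leq C|x|^{\beta}$ near the origin for some $C$; choosing $\tilde{r}_1 \leq r_1'$ so small that $C\tilde{r}_1^{\beta} < r_2/2$, and $\tilde{r}_2 \leq \min(r_2', r_2/2)$, we obtain $|y + R(x,y)| < r_2$ on $V(a,b,\tilde{r}_1,\tilde{r}_2,n)$, so $f \circ F$ is defined there.

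For the base case $p=1$: since $f \in O_{\alpha,0}$ and $x \circ F \equiv x$, the bound $|f \circ F(x,y)| \leq C|x|^{\alpha}$ is immediate. For each first partial of $f \circ F$, the chain rule produces a finite sum of terms $(\partial_k f)(F) \cdot \partial_j F_k$, where $(\partial_k f)(F)$ is bounded by $C|x|^{\alpha-1}$ (using $\partial_k f \in O_{\alpha-1,0}$ from Proposition \ref{clase1}(2) together with $x \circ F \equiv x$), while $\partial_j F_k$ is either $0$, $1$, or a bounded partial of $R$. Multiplying, each first partial of $f \circ F$ is bounded by $C|x|^{\alpha-1}$, so $f \circ F \in O_{\alpha,1}$.

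For the inductive step, assume the proposition at level $p-1$ and take $f \in O_{\alpha,p}$, $F \in D_{\beta,p}$. Proposition \ref{clase1}(1) gives $f \in O_{\alpha,p-1}$ and $F \in D_{\beta,p-1}$, so the inductive hypothesis already provides $f \circ F \in O_{\alpha,p-1}$. It remains only to show that each first partial of $f \circ F$ lies in $O_{\alpha-1,p-1}$, for then any $p$-th partial of $f \circ F$, being a $(p-1)$-th partial of such a first partial, is bounded by $C|x|^{\alpha-p}$. By the chain rule, a first partial of $f \circ F$ is a finite sum of terms $(\partial_k f)(F) \cdot \partial_j F_k$. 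Proposition \ref{clase1}(2) places $\partial_k f$ componentwise in $O_{\alpha-1,p-1}$, so the inductive hypothesis applied to $\partial_k f$ and $F \in D_{\beta,p-1}$ gives $(\partial_k f)(F) \in O_{\alpha-1,p-1}$. The factor $\partial_j F_k$ is either a constant (from the identity part of $F$) or a component of $\partial_j R \in O_{\beta-1,p-1}$. Proposition \ref{clase11}(2), applied componentwise, places each product either in $O_{\alpha-1,p-1}$ (constant case) or in $O_{\alpha+\beta-2,p-1}$; since $\beta > 1$, Proposition \ref{clase1}(1) yields $O_{\alpha+\beta-2,p-1} \subset O_{\alpha-1,p-1}$. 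Summing via Proposition \ref{clase11}(1) closes the induction.

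The only subtlety is the componentwise application of the scalar arithmetic proposition; the decisive analytic ingredient is the hypothesis $\beta > 1$, which ensures that the correction terms $\partial R$ strictly improve the $x$-order, so that all non-identity contributions in the chain rule are absorbed and the composition inherits exactly the order of $f$.
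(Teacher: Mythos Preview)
Your proof is correct. The approach differs from the paper's: where the paper expands each $k$-th partial derivative of $f\circ F$ directly via the higher-order chain rule (Fa\`a di Bruno) as a sum of products $(u\circ F)\,v_1\cdots v_s$ and bounds each factor separately using only the crude estimate $u\circ F\in O_{\alpha-r,0}$, you instead induct on $p$, using only the first-order chain rule and feeding $\partial_k f\in O_{\alpha-1,p-1}$ back into the inductive hypothesis to get $(\partial_k f)\circ F\in O_{\alpha-1,p-1}$. Your route avoids the combinatorics of the higher chain rule and yields the sharper intermediate statement that each first partial of $f\circ F$ lies in $O_{\alpha-1,p-1}$, at the cost of a two-step structure (base case plus induction). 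One minor imprecision: $\partial_j F_k$ is in general a constant \emph{plus} a component of $\partial_j R$ (e.g.\ $\partial_{y_l}(y_l+R_l)=1+\partial_{y_l}R_l$), not ``either/or''; the argument goes through unchanged once you split the product accordingly.
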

\begin{proof}
Since $F - \mathrm{id} \in O_{\beta, 0}$, it is easy to find $\tilde{r}_1, \tilde{r}_2 >0$ such that 
\[ F( V(a,b,\tilde{r}_1,\tilde{r}_2,n) ) \subset  V(a,b,r_1,r_2,n) . \]
We can assume $m=1$ without lack of generality. Any partial derivative of order $k \leq p$ is a finite sum of expressions of the form 
\[ (u \circ F)  v_1 \hdots \ v_s \]
where $u$ is a partial derivative of $f$ of order $r$, each $v_j$ is a partial derivative of a component of $F$ or order $r_j \geq 1$ and 
\[ (r_1 -1) + \hdots + (r_s -1) + r = k . \]
The first derivatives of components of $F$ are of the form $\upsilon +  h$ where $\upsilon \in \{0,1\}$ and $h \in O_{\beta-1, p-1}$ and thus
they belong to $O_{0, p-1}$. In particular $v_j \in O_{1-r_j, p-r_j}$ holds for any $1 \leq j \leq s$.
Moreover, since $u \in O_{\alpha -r, p-r}$, we deduce that
 $u \circ F \in O_{\alpha -r, 0}$. Then $(u \circ F)  v_1 \hdots \ v_s$
is of order 
\[ \left( (\alpha -r) + \sum_{j=1}^{s} (1-r_j), 0 \right)  = (\alpha - k, 0).  \] 
Since such property holds for any $0 \leq k \leq p$,  we conclude that $f \circ F$ is of order $(\alpha, p)$.
\end{proof}
\begin{pro} 
\label{pro:discrete_derivative}
Let $f: W \to {\mathbb R}^{m}$ be a $C^{\infty}$ map defined in an open neighborhood $W$ of the origin in ${\mathbb C}^{n+1}$ 
and consider $F \in D_{\beta, p}$ with  $\beta > 1$ and $p \geq 1$, defined in some  $V(a,b,r_1,r_2,n)$.   Then $f \circ F -  f \in O_{\beta, p}$.
\end{pro}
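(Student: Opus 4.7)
The plan is to linearize $f\circ F - f$ via first-order integration along the segment from $(x,y)$ to $F(x,y)$, and then reduce to Propositions \ref{pro:comp_O_D} and \ref{clase11}. Write $F(x,y)=(x,y+G(x,y))$, so that $G\in O_{\beta,p}$ is $\mathbb{C}^n$-valued and vanishes at the origin. For $(x,y)$ sufficiently near the origin, the whole segment $t\mapsto (x,y+tG(x,y))$, $t\in[0,1]$, stays inside $W$, so the fundamental theorem of calculus applied to $t\mapsto f(x,y+tG(x,y))$ gives
\begin{equation}
\label{equ:fcf}
(f\circ F - f)(x,y)=\sum_{j=1}^{n} G_{j}(x,y)\,\varphi_{j}(x,y),\qquad
\varphi_{j}(x,y)\colon=\int_{0}^{1}\frac{\partial f}{\partial y_{j}}(x,y+tG(x,y))\,dt.
\end{equation}
Assuming $f$ is scalar-valued (the vector case is treated componentwise), it suffices to prove $\varphi_{j}\in O_{0,p}$, for then Proposition \ref{clase11}\eqref{itemprod} gives $G_{j}\varphi_{j}\in O_{\beta,p}$ and summation in $j$ concludes.

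To establish $\varphi_{j}\in O_{0,p}$, I would fix $t\in[0,1]$ and consider $F_{t}(x,y):=(x,y+tG(x,y))$. Since $tG\in O_{\beta,p}$ with derivatives dominated by those of $G$ (because $|t|\leq 1$), the map $F_{t}$ belongs to $D_{\beta,p}$ with bounds uniform in $t\in[0,1]$. Because $f$ is $C^{\infty}$ on $W$, its partial derivative $\partial f/\partial y_{j}$ restricted to any sufficiently small $V(a,b,r_{1}',r_{2}',n)\subset W$ is bounded together with all its partial derivatives, hence lies in $O_{0,p}$. Applying Proposition \ref{pro:comp_O_D} to the pair $(\partial f/\partial y_{j},\,F_{t})$ yields $(\partial f/\partial y_{j})\circ F_{t}\in O_{0,p}$; tracing that proof shows that the constants bounding each derivative of order $\leq p$ depend only on the $O_{0,p}$-bounds of $\partial f/\partial y_{j}$ and the $O_{\beta,p}$-bounds of $tG$, so they can be taken independent of $t\in[0,1]$.

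The final step is to differentiate \eqref{equ:fcf} under the integral sign. This is legitimate because the integrand is jointly $C^{p}$ in $(x,y,t)$ on a compact-in-$t$ slab, and by the previous paragraph all $(x,y)$-derivatives of order $\leq p$ are uniformly bounded in $t$. Consequently each partial derivative of $\varphi_{j}$ of order $\leq p$ equals a $t$-integral of a bounded function, hence is bounded; so $\varphi_{j}\in O_{0,p}$. Combined with $G_{j}\in O_{\beta,p}$ and Proposition \ref{clase11}\eqref{itemprod}, this closes the argument. The only non-trivial point is the uniformity in $t$ of the constants in Proposition \ref{pro:comp_O_D}; no new estimate is needed, merely the observation that the chain-rule expansion underlying that proof yields bounds polynomial in the $O_{\beta,p}$-norms of $tG$, which are $\leq$ those of $G$ throughout $t\in[0,1]$.
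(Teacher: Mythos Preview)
Your proof is correct and proceeds by a genuinely different method than the paper's.

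The paper argues by induction on the order parameter: it first observes that $g\circ F - g\in O_{\beta,0}$ for every $g\in C^\infty$ by the mean value theorem, and then, assuming $g\circ F-g\in O_{\beta,q}$ for \emph{all} smooth $g$, expands $\partial_{u_j}(g\circ F - g)$ via the chain rule. The cross terms $(\partial_{u_l}g\circ F)\cdot\partial_{u_j}(u_l\circ F)$ with $l\neq j$ lie in $O_{\beta-1,p-1}$ directly; the diagonal term is rewritten as $(\partial_{u_j}g\circ F - \partial_{u_j}g) + (\partial_{u_j}g\circ F)\cdot\partial_{u_j}(u_j\circ F - u_j)$, and the first summand is handled by the inductive hypothesis applied to the new smooth function $\partial_{u_j}g$. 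So the induction is over the class of all smooth $g$ simultaneously, which is what makes it close.

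Your route via the Hadamard integral remainder is more direct---no induction is needed---at the cost of tracking uniformity of the $O_{0,p}$ constants in the interpolation parameter $t$; as you note, this follows because the chain-rule expansion in the proof of Proposition~\ref{pro:comp_O_D} produces bounds that are polynomial in the $O_{\beta,p}$-norms of $tG$, and these are dominated by those of $G$ for $t\in[0,1]$. One notational point worth cleaning up: since $f$ is only real-differentiable and $G$ is $\mathbb{C}^n$-valued, formula~\eqref{equ:fcf} should really be a sum over the $2n$ real coordinates $u_3,\ldots,u_{2n+2}$ (the real and imaginary parts of the $y_j$), with $G_j$ replaced by $u_l\circ F-u_l$ and $\partial f/\partial y_j$ by $\partial f/\partial u_l$. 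This changes nothing in your argument.
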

\begin{proof} 
We can assume $m=1$ without lack of generality. Let ${\mathcal E}$ be the set of $C^{\infty}$ functions 
defined in a neighborhood of the origin in ${\mathbb C}^{n+1}$. It is obvious that $g \in O_0$ for any $g \in {\mathcal E}$. 
So $g \circ F \in O_{0,p}$ for any $g \in {\mathcal E}$ by Proposition \ref{pro:comp_O_D}. Moreover, we have
\[ g \circ F - g \in O_{\beta, 0} \]
for any $g \in {\mathcal E}$ by an application of the mean value theorem.
The proof of the Proposition is completed by an induction process. Let
$q \in {\mathbb Z}_{0 \le q < p}$ and suppose that
$g \circ F - g \in O_{\beta, q}$ for any $g \in {\mathcal E}$. Let us show that 
$g \circ F - g \in O_{\beta, q+1}$ for any $g \in {\mathcal E}$.
Let $(u_1, \hdots, u_{2n+2})$ be real coordinates in ${\mathbb C}^{n+1}$ where $u_1 =\re (x)$, $u_2 = \im (x)$, $u_3 = \re (y_1)$ and so on. 
Let $g \in {\mathcal E}$ and consider 
\[ \Delta_j := \frac{\partial}{\partial u_j} \left(g \circ F - g \right)  = \sum_{l=1}^{2n+2} \left( \frac{\partial g}{\partial u_l} \circ F\right) \frac{\partial (u_l \circ F)}{\partial u_j} - \frac{\partial g}{\partial u_j} . \]
It suffices to show that $\Delta_j \in O_{\beta -1, q}$ for every $1 \leq j \leq 2n+2$. We have
\[ \left( \frac{\partial g}{\partial u_l} \circ F\right) \frac{\partial (u_l \circ F)}{\partial u_j}  \in O_{\beta -1, p-1} \subset O_{\beta -1, q} \]
if $j \neq l$ since the first term of the product belongs to $O_{0,p}$ whereas the second one belongs to $O_{\beta -1, p-1}$.
So it suffices to prove that 
\[  \left( \frac{\partial g}{\partial u_j} \circ F\right) \frac{\partial (u_j \circ F)}{\partial u_j} - \frac{\partial g}{\partial u_j}  =  
  \frac{\partial g}{\partial u_j} \circ F  - \frac{\partial g}{\partial u_j}  + \left( \frac{\partial g}{\partial u_j} \circ F\right) \frac{\partial (u_j \circ F - u_j)}{\partial u_j}   \] 
belongs to $O_{\beta -1, q}$ for $1 \leq j \leq 2n+2$. On the one hand, the term 
\[ \left( \frac{\partial g}{\partial u_j} \circ F\right) \frac{\partial (u_j \circ F - u_j)}{\partial u_j}  \]
belongs to  $O_{\beta -1, p-1}$ and then to $O_{\beta -1, q}$ analogously as above.  On the other hand, we have
\[  \frac{\partial g}{\partial u_j} \circ F  - \frac{\partial g}{\partial u_j}  \in O_{\beta, q}  \subset O_{\beta -1, q}  \]
by the inductive hypothesis, completing the proof.
\end{proof}
Our next goal is to prove that $F \in D_{\beta, p}$, $\beta >1$ and $p \ge 1$ imply $F^{-1} \in D_{\beta, p}$.
The first step is the following topological lemma. 
\begin{lem}\label{perturbacion} Given $r>0$,  denote by $B(r)$ the ball of radius $r$ centered at the origin of $\mathbb{C}^n$. Let   $f\colon B(\mathfrak{r})\to \mathbb{C}^n$, $\mathfrak{r}>0$  be a function
of the form $f(y)=y+\sigma(y)$, where  $\sigma \colon B(\mathfrak{r})\to \mathbb{C}^n$ is differentiable and  $|d\sigma|\le \lambda$  for some $\lambda\in [0,1)$. 
Then $f(B(\mathfrak{r}))$ is open and $f\colon B(\mathfrak{r})\to f(B(\mathfrak{r}))$ is a diffeomorphism.
Furthermore, if $f(0)=0$,  the image of $f$ contains the ball $B\big((1-\lambda)\mathfrak{r}\big)$.
\end{lem}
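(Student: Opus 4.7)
The plan is to treat this as a standard quantitative inverse function theorem, combining a bi-Lipschitz estimate from the bound on $d\sigma$ with Banach's fixed-point theorem. Since $B(\mathfrak{r})$ is convex, I can integrate $d\sigma$ along the segment joining any two points $y_1,y_2\in B(\mathfrak{r})$ to obtain
\[ |\sigma(y_1)-\sigma(y_2)|\le \lambda\,|y_1-y_2|, \]
which gives at once the bi-Lipschitz bounds
\[ (1-\lambda)|y_1-y_2|\le |f(y_1)-f(y_2)|\le (1+\lambda)|y_1-y_2|. \]
In particular $f$ is injective on $B(\mathfrak{r})$.

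Next I would establish that $f$ is a local diffeomorphism. At every $y\in B(\mathfrak{r})$ one has $df(y)=I+d\sigma(y)$ with $\|d\sigma(y)\|\le \lambda<1$, so by a Neumann series $df(y)$ is invertible. The inverse function theorem then makes $f$ a $C^1$ local diffeomorphism around each point of $B(\mathfrak{r})$. Combined with global injectivity, this implies that $f(B(\mathfrak{r}))$ is open and that $f\colon B(\mathfrak{r})\to f(B(\mathfrak{r}))$ is a $C^1$ diffeomorphism (with inverse that is as smooth as $f$).

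For the image-containment claim, assume $f(0)=0$ and fix $z\in B((1-\lambda)\mathfrak{r})$. The equation $f(y)=z$ is equivalent to $y$ being a fixed point of the operator $T_z(y):=z-\sigma(y)$. Using $\sigma(0)=0$ and the Lipschitz bound above,
\[ |T_z(y)|\le |z|+\lambda|y|,\qquad |T_z(y_1)-T_z(y_2)|\le\lambda|y_1-y_2|. \]
Choose any $\mathfrak{r}'\in[\,|z|/(1-\lambda),\mathfrak{r}\,)$, which is nonempty because $|z|<(1-\lambda)\mathfrak{r}$. Then $T_z$ sends the closed ball $\overline{B(\mathfrak{r}')}$ into itself and is a $\lambda$-contraction there, so Banach's fixed-point theorem produces a unique $y\in\overline{B(\mathfrak{r}')}\subset B(\mathfrak{r})$ with $T_z(y)=y$, i.e.\ $f(y)=z$.

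No single step should be a real obstacle; this is a textbook quantitative inverse function argument. The only mildly delicate point is the last paragraph: one must choose an invariant \emph{closed} ball $\overline{B(\mathfrak{r}')}$ strictly inside the open domain $B(\mathfrak{r})$ so that the fixed point produced by the Banach principle actually lies in the domain where $f$ is defined, which is why one picks $\mathfrak{r}'$ slightly below $\mathfrak{r}$ rather than working directly on $\overline{B(\mathfrak{r})}$.
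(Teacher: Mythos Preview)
Your argument is correct. The first half --- the bi-Lipschitz estimate from the mean value inequality, injectivity, invertibility of $df=I+d\sigma$, and the conclusion that $f$ is a diffeomorphism onto an open image --- is essentially identical to the paper's proof.

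The genuine difference is in the image-containment step. You solve $f(y)=z$ by rewriting it as a fixed-point problem $y=T_z(y)=z-\sigma(y)$ and applying Banach's contraction principle on a closed ball $\overline{B(\mathfrak{r}')}\subset B(\mathfrak{r})$. The paper instead argues by contradiction via a degree/linking-number argument: assuming some $p\in B((1-\lambda)\mathfrak{r})$ is missed, it takes $\mathfrak{r}'<\mathfrak{r}$ with $|p|<(1-\lambda)\mathfrak{r}'$, observes that the linking number of $f(\partial B(\mathfrak{r}'))$ about $p$ must be zero, and then uses the straight-line homotopy $H(y,t)=y+t\sigma(y)$ --- which stays outside the ball of radius $(1-\lambda)\mathfrak{r}'$ and hence avoids $p$ --- to equate this linking number with that of $\partial B(\mathfrak{r}')$ about $p$, which is $1$. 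Your contraction argument is more elementary and constructive, and avoids any appeal to topological degree; the paper's homotopy argument, on the other hand, is robust in that it would survive weakening the contraction hypothesis to mere homotopy invariance of degree, and it is the same device the authors use elsewhere (e.g.\ in Lemma~\ref{expon}). Both routes are standard and equally adequate here.
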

\proof
Since $|d\sigma|\le \lambda$, from the Mean Value Inequality we see that 
\begin{align}
|\sigma(y_1)-\sigma(y_2)|\le \lambda|y_1-y_2|, \quad y_1,y_2\in B(\mathfrak{r}).
\end{align}
Then 
\begin{align}\nonumber
|f(y_1)-f(y_2)|&=|(y_1-y_2)+(\sigma(y_1)-\sigma(y_2))|
\ge |y_1-y_2|-|\sigma(y_1)-\sigma(y_2)|,
\end{align}
\begin{align}\nonumber
|f(y_1)-f(y_2)|
&\ge  (1- \lambda)|y_1-y_2|, \quad y_1,y_2\in B(\mathfrak{r}).
\end{align}
Then, since $1-\lambda>0$, the inverse $f^{-1}$ exists and it is continuous. Thus  $f$ is a homeomorphism
and therefore  $f(B(\mathfrak{r}))$ is open. Finally, since $|d\sigma|\le \lambda$ guarantees that  $df=\id+d\sigma$ is invertible, we conclude that $f$ is a diffeomorphism, which proves the first assertion of the lemma. Suppose now that $f(0)=0$, that is, $\sigma(0)=0$.  Assume by contradiction that there exists 
$p\in B\big((1-\lambda)\mathfrak{r}\big)$ such that $p\notin f(B(\mathfrak{r}))$. 
Take any $\mathfrak{r}'<\mathfrak{r}$ such that \begin{align} \label{prr}|p|<(1-\lambda)\mathfrak{r}'.
\end{align}
Firstly, note that since  
 $p\notin f(B(\mathfrak{r}'))$, the linking number of $f(\partial B(\mathfrak{r}'))$ and $p$ is zero. 
On the other hand, the function 
$$H(y,t)=y+t\sigma(y),\quad y\in \partial B(\mathfrak{r}'), t\in [0,1]$$
is a homotopy 
between $\partial B(\mathfrak{r}')$ and $f\big(\partial B(\mathfrak{r}')\big)$. Since 
\begin{align*}
\left| H(y,t)\right| &=|y+t\sigma(y)|\ge|y|-|\sigma(y)|= |y|-|\sigma(y)-\sigma(0)|\\
&\ge  |y|-\lambda |y|= (1-\lambda)\mathfrak{r}'\\
&>|p|,
\end{align*}
$H$ is indeed a homotopy in $\mathbb{C}^n\backslash\{p\}$. This implies that the linking number
of $\partial B(\mathfrak{r}')$ and $p$ coincides with the linking number of $f\big(\partial B(\mathfrak{r}')\big)$ and $p$, which --- as we have seen above --- is equal to zero. That is, the linking number of 
$\partial B(\mathfrak{r}')$ and $p$  is zero. But this is a contradiction because from \eqref{prr} we 
see that $p\in B(\mathfrak{r}')$, whence the linking number of $\partial B(\mathfrak{r}')$ and $p$
is actually equal to one. \qed
\begin{rem}
Lemma \ref{perturbacion} implies that if $F:  V(a,b,r_{1}, r_{2},n) \to W$ belongs to $D_{\beta, p}$ ($\beta >1$, $p \geq 1$) 
then there exist $r_{1}' , r_{2}' >0$
such that $V(a,b,r_{1}' , r_{2}',n)$ is contained in the image of $F$ and so $F^{-1}$ is defined in such set.
\end{rem} 
\begin{pro}
\label{pro:inverse}
Let $F \in D_{\beta, p}$ with $\beta > 1$ and $p \geq 1$. Then $F^{-1} \in D_{\beta, p}$.
\end{pro}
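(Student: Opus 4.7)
The plan is to induct on $q \in \{0, 1, \hdots, p\}$, showing $F^{-1} \in D_{\beta, q}$ at each step. Write $F(x, y) = (x, y + \sigma(x, y))$ with $\sigma \in O_{\beta, p}$; by the remark following Lemma~\ref{perturbacion}, $F^{-1}$ exists on some $V(a, b, r_1', r_2', n)$ and has the form $F^{-1}(x, y) = (x, y + \tau(x, y))$, with the functional equation $\tau(x, y) = -\sigma(x, y + \tau(x, y))$ coming from $F \circ F^{-1} = \mathrm{id}$. The base case $q = 0$ is then immediate: since $|\sigma(x, z)| \leq C|x|^\beta$ near the origin, the equation gives $|\tau(x, y)| \leq C|x|^\beta$, so $\tau \in O_{\beta, 0}$.

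For the inductive step from $q$ to $q+1$ with $q < p$, I would differentiate $F \circ F^{-1} = \mathrm{id}$ to get the inverse function formula $dF^{-1} = (dF)^{-1} \circ F^{-1}$. Write $dF = I + A$; by Proposition~\ref{clase1}(b) each entry of $A$ lies in $O_{\beta - 1, p - 1}$. The central technical step is to show that $(I + A)^{-1} - I$ has entries in $O_{\beta - 1, p - 1}$, which I would derive from the adjugate formula $(I + A)^{-1} = \mathrm{adj}(I + A)/\det(I + A)$. Writing $\det(I + A) = 1 + P(A)$ with $P$ a polynomial in the entries of $A$ without constant term, Proposition~\ref{clase11}(2) applied monomial by monomial shows $P(A) \in O_{\beta - 1, p - 1}$, and Proposition~\ref{clase11}(3) (using $\beta - 1 > 0$) then yields $1/\det(I + A) - 1 \in O_{\beta - 1, p - 1}$. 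The entries of $\mathrm{adj}(I + A) - I$ are likewise polynomials without constant term in the entries of $A$, hence in $O_{\beta - 1, p - 1}$; assembling via Proposition~\ref{clase11} closes the claim.

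With the claim in hand, for $q \geq 1$ Proposition~\ref{pro:comp_O_D} applied at level $q$ (using $O_{\beta - 1, p - 1} \subset O_{\beta - 1, q}$ from Proposition~\ref{clase1}(a)) yields $dF^{-1} - I = ((I + A)^{-1} - I) \circ F^{-1} \in O_{\beta - 1, q}$; for $q = 0$ the same conclusion at level $0$ is immediate, since $F^{-1}$ preserves the $x$-coordinate and the pointwise bound $|(I + A)^{-1} - I| \leq C|x|^{\beta - 1}$ transfers directly under composition. Either way, the first partial derivatives of $\tau$ belong to $O_{\beta - 1, q}$, which combined with the base-case bound $\tau \in O_{\beta, 0}$ gives $\tau \in O_{\beta, q + 1}$ by checking derivatives order by order: a $k$-th partial derivative of $\tau$ with $1 \leq k \leq q + 1$ is a $(k - 1)$-th partial derivative of one of the first partials, hence bounded by a constant times $|x|^{\beta - k}$. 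This closes the induction and yields $F^{-1} \in D_{\beta, p}$. The main obstacle is the determinant claim, where the quotient property Proposition~\ref{clase11}(3) and careful algebraic bookkeeping are crucial; the rest is routine use of the $O_{\alpha, p}$ and $D_{\beta, p}$ calculus already developed.
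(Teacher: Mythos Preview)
Your proof is correct and follows essentially the same route as the paper's: both argue by induction on the regularity level, use the adjugate formula together with Proposition~\ref{clase11} to show $(dF)^{-1}-I\in O_{\beta-1,p-1}$, and then feed $dF^{-1}-I=((dF)^{-1}-I)\circ F^{-1}$ into Proposition~\ref{pro:comp_O_D} to climb from level $q$ to $q+1$. The only cosmetic difference is that the paper starts its induction at $q=1$ (handling the passage from $O_{\beta,0}$ to $O_{\beta,1}$ separately, via the observation that the pointwise bound on $(dF)^{-1}-I$ transfers under $F^{-1}$ since $x\circ F^{-1}\equiv x$), whereas you fold this into a uniform induction starting at $q=0$; the content is identical.
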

\begin{proof} 
Since $F$ is of class $C^{1}$ so is $F^{-1}$. The property $F - \mathrm{id} \in O_{\beta,p}$ implies 
$F^{-1} - \mathrm{id} \in O_{\beta,0}$ in  $V(a,b,r_{1}' , r_{2}',n)$ if $r_{1}'$ and $r_{2}'$ are small enough. We have 
\[ d F^{-1} - \mathrm{id} = (dF)^{-1} \circ F^{-1} - \mathrm{id} \]
by the chain rule. 

We claim that $(dF)^{-1} - \mathrm{id} \in O_{\beta -1, p-1}$.  Observe that
\[ dF (z)^{-1} = (\det dF (z))^{-1} \mathrm{adj} (dF (z)). \]
Since the product of elements of $O_{\beta -1, p-1}$ belongs to $O_{\beta - 1, p-1}$, it is easy to see that 
\[ \det dF  -1 \in O_{\beta-1, p-1} \quad \mathrm{and} \quad \mathrm{adj} (dF) - \mathrm{id} \in O_{\beta -1, p-1} . \]
So in order to prove that $(dF)^{-1} - \mathrm{id} \in O_{\beta -1, p-1}$ we just need to show that given any $h_1, h_2 \in O_{\beta -1, p-1}$ we have
\[ \frac{h_1}{1+h_2} \in O_{\beta -1, p-1} \quad \mathrm{and} \quad \frac{1 + h_1}{1+h_2} -1  \in O_{\beta -1, p-1}, \]
which follows from  in Proposition \ref{clase11}. Now, we obtain 
\[ d F^{-1} - \mathrm{id} = (\mathrm{id} +  O_{\beta -1, p-1}) \circ F^{-1} - \mathrm{id} = (F^{-1} - \mathrm{id}) + O_{\beta -1, 0}  \in O_{\beta -1, 0}, \]
where the second equality follows from the identity $x \circ F^{-1} \equiv x$ and the inclusion of 
$F^{-1} - \mathrm{id}$ in $O_{\beta,0}$. 
In this way, we get $F^{-1} \in D_{\beta,1}$. 
In order to complete the proof, it is enough to show that 
$F^{-1} \in D_{\beta, q}$ for some $1 \le q < p$ implies $F^{-1} \in D_{\beta, q+1}$. So assume that  $F^{-1} \in D_{\beta, q}$ and $1 \le q < p$.
From Proposition \ref{pro:comp_O_D} we see that 
$h \circ F^{-1}\in O_{\beta -1, q}$ whenever $h \in O_{\beta-1,q}$.  Then
\[ d F^{-1} - \mathrm{id} = (\mathrm{id} +  O_{\beta -1, p-1}) \circ F^{-1} - \mathrm{id} = (F^{-1} - \mathrm{id}) + O_{\beta -1, q}  \in O_{\beta -1, q}, \]
whence $F^{-1} \in D_{\beta, q+1}$. 
\end{proof}

\section{Proof of Proposition \ref{smoothrealization}}\label{secsmooth}
This section is devoted to prove Proposition \ref{smoothrealization}. Since the proof is quite long, we organize it with the aid of several results
--- propositions  \ref{phiala}, \ref{fholder}, \ref{nabla} --- which will be proved along the way. 

Recall from the introduction to Section \ref{secmain} that  $\delta_0 (\nu +1) - \delta_1 \geq 4$.  We denote 
 \[ \phi(x,y)= y+\tilde{\phi}(x,y) \quad \mathrm{and} \quad \phi_{-1}(x,y)= y+\tilde{\phi}_{-1}(x,y) . \] 
We have 
\[ \tilde{\phi}  = O(|x|^{4} |y|^{\nu+1}) \quad \mathrm{and} \quad \tilde{\phi}_{-1}  = O(|x|^{4} |y|^{\nu+1}) \]
by Lemma \ref{expon}.
From now on, we denote 
$$\mathbb{D}^+= \left\{x\in\mathbb{C}\colon  |x|<1,\ \re(x)>0 \right\}$$
and $$\mathbb{D}^+(2)= \left\{x\in\mathbb{C}\colon |x|<2\  \re(x)>0 \right\}.$$
 \begin{pro} \label{phiala}
  The functions $\tilde{\phi}$ and $\tilde{\phi}_{\scriptscriptstyle -1}$ are holomorphic on
   $\widehat{\mathbb{D}}(2)\times B$ and satisfy the following properties:
  \begin{enumerate}
  \item\label{phial}    If restricted to 
 ${\mathbb{D}}^+(2)\times B$,  the  functions $\tilde{\phi}$ and $\tilde{\phi}_{\scriptscriptstyle -1}$  are of order $4$, in the sense of Definition \ref{definition}.
 \item \label{cota12}If $R>0$ is the radius of $B$, there exists $\mathfrak{r}\in (0, R)$ such that  
   \begin{align}\nonumber \left|d_y\tilde{\phi}(x,y)\right|, \left|d_y\tilde{\phi}_{-1}(x,y)\right|,
  \left|d_y\tilde{\phi}\big(x,\phi_{-1}(x,y)\big)\right|< \frac{1}{2}, \quad x\in\widehat{\mathbb{D}}(2),  |y|\le \mathfrak{r}.
 \end{align}  
 \end{enumerate} 
\end{pro}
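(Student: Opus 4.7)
The plan is to take Lemma \ref{expon} as the quantitative input giving
$|\tilde{\phi}(x,y)|, |\tilde{\phi}_{-1}(x,y)| \le C|x|^{\delta_0(\nu+1)-\delta_1}|y|^{\nu+1}$
on $\widehat{\mathbb{D}}(2) \times B$, and then to read off statement \eqref{phial} from the sectorial Cauchy estimate of Proposition \ref{margot} and statement \eqref{cota12} from a direct application of Cauchy's inequality in the $y$-variables.

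Holomorphy of $\tilde{\phi}$ and $\tilde{\phi}_{-1}$ on $\widehat{\mathbb{D}}(2) \times B$ is read off from the defining formulas \eqref{equ:phi} and \eqref{phi2}: on the slit disc the principal branch of $\log$ is holomorphic, the (polynomial) vector field $Z$ has a flow $\mathrm{exp}(t\,Z)$ that depends holomorphically on $(t, x, y)$, and $\xi = \mathfrak{h}_0 \circ \mathfrak{h}^{-1}$ is a germ of holomorphic diffeomorphism. So $\Phi$ and the map defined by \eqref{phi2} are holomorphic, hence so are $\tilde{\phi} = \phi - y$ and $\tilde{\phi}_{-1} = \phi_{-1} - y$.

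For \eqref{phial}, since $\delta_0(\nu+1) - \delta_1 \ge 4$, Lemma \ref{expon} implies that $|x|^{-4}|\tilde{\phi}(x,y)|$ and $|x|^{-4}|\tilde{\phi}_{-1}(x,y)|$ are bounded on $\widehat{\mathbb{D}}(2)\times B$. In the notation of Section \ref{secspecial}, $\widehat{\mathbb{D}}(2)\times B$ is $V(-\pi,\pi,2,R,n)$ and $\mathbb{D}^{+}(2)\times B$ is $V(-\pi/2,\pi/2,2,R,n)$, and since $[-\pi/2,\pi/2]\subset(-\pi,\pi)$, Proposition \ref{margot} applied with $\alpha = 4$ immediately yields that $\tilde{\phi}$ and $\tilde{\phi}_{-1}$ restricted to $\mathbb{D}^{+}(2)\times B$ are of order $4$.

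For \eqref{cota12}, I would fix $\mathfrak{r}\in(0,R/2)$ and use Cauchy's inequality in the $y$-variables on a polydisc of polyradius $\mathfrak{r}$ centered at $y_0$ with $|y_0| \le \mathfrak{r}$: this polydisc lies in $B$, on it $|y|\le 2\mathfrak{r}$, so Lemma \ref{expon} and $|x|<2$ give $|\tilde{\phi}(x,y)|\le C\,2^{\delta_0(\nu+1)-\delta_1}(2\mathfrak{r})^{\nu+1}$ there. Cauchy then produces a bound $|d_y\tilde{\phi}(x,y_0)|\le C'\mathfrak{r}^{\nu}$ with $C'$ independent of $x$ and $y_0$; since $\nu\ge 1$, shrinking $\mathfrak{r}$ makes this $<1/2$. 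The same estimate applies verbatim to $d_y\tilde{\phi}_{-1}$. For the composed bound, I would first note that, possibly after further shrinking $\mathfrak{r}$, Lemma \ref{expon} gives $|\phi_{-1}(x,y)|\le\mathfrak{r}+C\,2^{4}\,\mathfrak{r}^{\nu+1}\le 2\mathfrak{r}$ whenever $|y|\le\mathfrak{r}$, and then apply the previous Cauchy estimate at the point $\phi_{-1}(x,y)$ using a polydisc of polyradius $R/4$ that still lies in $B$. The only mildly delicate point is bookkeeping the constants so that all three bounds can be made $<1/2$ with a single choice of $\mathfrak{r}$; no new idea beyond Lemma \ref{expon} and Cauchy's inequality is needed, and I do not anticipate a genuine obstacle.
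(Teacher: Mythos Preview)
Your proposal is correct and follows essentially the same route as the paper: Lemma \ref{expon} plus Proposition \ref{margot} for item \eqref{phial}, and Lemma \ref{expon} plus the Cauchy inequality in the $y$-variables for item \eqref{cota12}, with the third bound handled by first controlling $|\phi_{-1}(x,y)|$ via Lemma \ref{expon} and then feeding that point back into the derivative estimate. The only cosmetic slip is the claim that a polydisc of polyradius $\mathfrak{r}$ centered at $y_0$ with $|y_0|\le\mathfrak{r}$ satisfies $|y|\le 2\mathfrak{r}$ and lies in $B$ when $\mathfrak{r}<R/2$; in $\mathbb{C}^n$ one rather has $|y|\le(1+\sqrt{n})\mathfrak{r}$, so you should take $\mathfrak{r}<R/(1+\sqrt{n})$, but this does not affect the argument.
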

\proof 
It is clear from the definitions that $\tilde{\phi}$ and $\tilde{\phi}_{\scriptscriptstyle -1}$ are holomorphic on
   $\widehat{\mathbb{D}}(2)\times B$. 
 Then, \eqref{phial} of Proposition \ref{phiala} follows from Proposition \ref{margot}
  for both  $\tilde{\phi}$ and  $\tilde{\phi}_{-1}$.
Let us prove \eqref{cota12} of Proposition \ref{phiala}. 
   From Lemma \ref{expon},
via the Cauchy inequality in the variable $y$, we find  $\tilde{c}_1>0$
such that 
\begin{align}\label{ruben} \left|d_y\tilde{\phi}(x,y)\right|\le \tilde{c}_1|x|^{4} |y|^{\nu}\quad \textrm{for }x\in\widehat{\mathbb{D}}(2) \textrm{ and }|y|<R/2.
\end{align}
 Then, for $\mathfrak{r}>0$ small enough,
 \begin{align}\label{fido}
|d_y\tilde{\phi}| 
< 1/2,  \quad  x\in\widehat{\mathbb{D}}(2),\; |y|\le \mathfrak{r},
\end{align} which is  the first inequality in \eqref{cota12} of Proposition \ref{phiala}. The second inequality in \eqref{cota12} is obtained exactly in the same way and we can assume this is done for the same 
$\mathfrak{r}$. Let us show the third inequality in  \eqref{cota12} of Proposition \ref{phiala}.
We have $\phi_{-1}(x,y) = O(y)$ by by Lemma \ref{expon}. 
  Therefore, if $y$ is small enough it follows  from \eqref{fido} that
 \begin{align}\nonumber
\left|d_y\tilde{\phi}\left(x,\phi_{-1}(x,y)\right)\right| 
< 1/2,  \quad  x\in\widehat{\mathbb{D}}(2).
\end{align} Finally, by reducing $\mathfrak{r}$ we can assume that this last inequality holds for
$x\in\widehat{\mathbb{D}}(2)$ and $|y|\le \mathfrak{r}$, which finishes the proof.
\qed

We start the construction of the diffeomorphism $F$. Let $B( \mathfrak{r})$ be denote the ball of radius $\mathfrak{r}$ centered at the origin of $\mathbb{C}^n$.
Take a smooth function $$\rho\colon \left[-\frac{\pi}{2}, \frac{\pi}{2}\right]\to[0,1]$$ such that
\begin{equation}\label{defiro}\rho(s)=\begin{cases}
 0 & \textrm{ if }\quad -\frac{\pi}{2}< s<-\frac{\pi}{4} \\
 1&\textrm{ if } \quad\frac{\pi}{4}< s<\frac{\pi}{2}\end{cases} \end{equation} 
and define the smooth map $\tilde{\sigma}_1\colon \Delta_1\times B( \mathfrak{r})\to \mathbb{C}^n$ by 
\begin{equation}\label{deco1}\tilde{\sigma}_1(x,y)=\begin{cases} (1-\rho(\arg x))\tilde{\phi}(x,y) & \textrm{ if }
\re(x)>0\\
0 &\textrm{ otherwise}\end{cases}\end{equation}
 Now, define the smooth functions \begin{align*} &\sigma_1\colon \Delta_1\times B( \mathfrak{r})\to \mathbb{C}^n 
 \\
 &F_1\colon \Delta_1\times B( \mathfrak{r})\to \mathbb{C}^{n+1} \\
 &\sigma_2\colon \Delta_2\times B( \mathfrak{r})\to \mathbb{C}^n \\
 &F_2\colon \Delta_2\times B( \mathfrak{r})\to \mathbb{C}^{n+1}\end{align*} by 
 
 \begin{align*}&\sigma_1(x,y)=y+ \tilde{\sigma}_1(x,y)\\
 &F_1(x,y)=(x,\sigma_1(x,y))\\  
 &\sigma_2(x,y)=\begin{cases}\sigma_1\left(x,\phi_{\scriptscriptstyle -1}(x,y)\right) &\textrm{if } \re(x)>0\\
y &\textrm{otherwise }
\end{cases}\\ 
&F_2(x,y)=(x,\sigma_2(x,y)).   \end{align*}
Let us make a remark about the domain of definition of ${\sigma}_2$: a priori this function is defined at the point $(x,y)$, $\re(x)>0$  if $\left(x,\phi_{\scriptscriptstyle -1}(x,y)\right)$ belongs to the domain of 
$\sigma_1$. However, it is easy to see that $\sigma_1$ can be defined at  $\left(x,\phi_{\scriptscriptstyle -1}(x,y)\right)$  whenever
$\phi$ is defined at  $\left(x,\phi_{\scriptscriptstyle -1}(x,y)\right)$. Thus, it follows from \eqref{phi-1} 
that $\sigma_1\left(x,\phi_{\scriptscriptstyle -1}(x,y)\right)$  can be defined for all $x\in{\mathbb{D}}$,
$\re (x)>0$,  $y\in B( \mathfrak{r})$ and therefore  ${\sigma}_2(x,y)$ is well defined on $ \Delta_2\times B( \mathfrak{r})$. \\
It follows directly from the definition \eqref{deco1} that 
\begin{equation}\nonumber\tilde{\sigma}_1(x,y)=\begin{cases} \tilde{\phi}(x,y) & \textrm{ if }
-\frac{\pi}{2}<\arg(x)<-\frac{\pi}{4}\\
0 &\textrm{ if } \frac{\pi}{4}<\arg(x)<\frac{3\pi}{2} \end{cases}\end{equation}
Therefore we have 
 \begin{equation}\label{deco1.1}F_1(x,y)=\begin{cases} (x,\phi(x,y)) & \textrm{ if }
-\frac{\pi}{2}<\arg(x)<-\frac{\pi}{4}\\
(x,y) &\textrm{ if } \frac{\pi}{4}<\arg(x)<\frac{3\pi}{2} \end{cases}\end{equation}
 The functions $F_1\circ \psi_1$ and $F_2\circ \psi_2$ --- where $\psi_1$ and $\psi_2$ are the charts of 
$\mathcal{M}$ --- are respectively defined on the sets 
$$\mathcal{M}'_1\colon= \psi_1^{-1}\left(\Delta_1\times B( \mathfrak{r} / 2 )\right)$$ and 
$$\mathcal{M}'_2\colon= \psi_2^{-1}\left(\Delta_2\times B( \mathfrak{r} / 2) \right).$$
The set $\mathcal{M}'\colon=\mathcal{M}'_1\cup \mathcal{M}'_2$ is clearly a neighborhood of the leaf $L$. By using \eqref{phi-1} and the expression of $\Psi=\psi_2\circ\psi_1^{-1}$ given in \eqref{cambio}, it is straightforward to check that  $F\colon \mathcal{M}'\to \mathbb{C}^{n+1}$ will be well defined by
\begin{align*} F=\begin{cases}F_1\circ \psi_1 &\textrm{ on }  \mathcal{M}'_1\\
F_2\circ\psi_2 &\textrm{ on } \mathcal{M}'_2\end{cases}
\end{align*}
Let us show that $F$ is a smooth diffeomorphism onto its image. Firstly we show that $F$ is injective.
Suppose that $F(p)=F(q)=(x_0,y_0)$. 
Since $\phi(x,y)=y+\tilde{\phi}(x,y)$, from \eqref{cota12} of Proposition \ref{phiala} we see that, { if }
$x\in\mathbb{D}^+$ and  $|y|< \mathfrak{r}$,
$$|d_y\phi|\le 1+|d_y\tilde{\phi}|<\frac{3}{2}.$$ Therefore,
if $|y|<\mathfrak{r}/2$ and $x\in {\mathbb{D}^+}$ we obtain
\[ |\phi(x,y)|=|\phi(x,y)-\phi(x,0)|< \frac{3}{2}|y|
<\frac{3\mathfrak{r}}{4}<\mathfrak{r}. \]
Therefore, if 
$x_0\in \Delta_1$ (resp. $x_0\in \Delta_2$) then $p, q \in  \psi_1^{-1}\left(\Delta_1\times B( \mathfrak{r}  )\right)$
(resp. $p, q \in  \psi_1^{-1}\left(\Delta_2\times B( \mathfrak{r}  )\right)$).
 Then it is enough to prove that $F_1$ is injective in $\psi_1^{-1}\left(\Delta_1\times B( \mathfrak{r}  )\right)$
 and, since ${\mathbb D} \setminus \Delta_1 = [0, -i)$, that $F_2$ is injective in a neighborhood of $\psi_2^{-1}\left((0,-i)\times B( \mathfrak{r}  )\right)$.
 The latter property is obvious since $F_2$ is the identity map in a neighborhood of $(0,-i)\times B( \mathfrak{r}  )$.
 So we focus on the injectivity of $F_1$.
  It follows from \eqref{cota12} of Proposition \ref{phiala} and \eqref{deco1} that  
 \begin{align}\label{cota13}\left|d_y{\tilde{\sigma}_1}(x,y)\right|, \left|d_y{\tilde{\sigma}_1}(x,\phi_{-1}(x,y))\right| <1/2, \quad x\in\Delta_1,\  y\in  B( \mathfrak{r}).\end{align}
Thus, for each $x\in \Delta_1$,  Lemma \ref{perturbacion} implies that the map  
$$y\in B(\mathfrak{r})\mapsto \sigma_1(x,y) \in\mathbb{C}^n$$ is a diffeomorphism onto its image. Then, for each  $x\in \Delta_1$, the function
 $F_1$ maps $\{x\}\times B(\mathfrak{r})$ diffeomorphically into the same fiber
$\{x\}\times\mathbb{C}^n$. This guarantees that $F_1$ is a diffeomorphism onto its image.   
Therefore $F$ is inyective and, since we have also proved that $F$ is a local diffeomorphism, we conclude that $F$ is a diffeomorphism onto its image. 
At this point,  \eqref{hojaL} and \eqref{fro} of Proposition \ref{smoothrealization} follows directly from the definition of $F$.
Clearly we have $F(\mathcal{M}')\subset \mathbb{C}^n\backslash E$. Let us show that 
$F(\mathcal{M}')$ contains the set $\mathbb{D}^*\times B( \mathfrak{r}/4)$, where $B( \mathfrak{r}/4)$ is the ball of radius $ \mathfrak{r}/4$ centered at $0\in\mathbb{C}^n$. Given $x\in \Delta_1$,    since $$|d_y\tilde{\sigma}_1(x,y)|<1/2,\quad y\in B(\mathfrak{r}),$$  by Lemma \ref{perturbacion}  the image of the map 
$$y\in B(\mathfrak{r} /2)\mapsto \sigma_1(x,y)\in \mathbb{C}^n$$
contains the ball $B( \mathfrak{r}/4)$. Thus, for each $x\in \Delta_1$
 the set $\{x\}\times B( \mathfrak{r}/4)$ 
is contained in the image of $F_1$, whence  $F_1 ({\mathcal M}_{1}')$ contains the set $\Delta_1\times B( \mathfrak{r}/4)$. 
Since 
\[ F_2 ((0,-i) \times B( \mathfrak{r}/2)) = (0,-i) \times B( \mathfrak{r}/2), \]
$F(\mathcal{M}')$ contains
  $\mathbb{D}^*\times B( \mathfrak{r}/4)$, so the set  $$U\colon=F(\mathcal{M}')\cup \big(\{0\}\times B( \mathfrak{r}/4)\big)$$
  is open.
 It is evident that $F(\mathcal{M}')=U\backslash E$ and therefore item \eqref{fro0.5} of Proposition 
 \ref{smoothrealization} holds by taking any  
 \begin{align}\label{rrr}r\le \mathfrak{r}/4.
 \end{align} 
 
 \begin{rem}\label{f11} If $x\in\mathbb{D}^+$ and $y\in B(\mathfrak{r})$, from \eqref{deco1} we
 have $$\tilde{\sigma}_1(x,y)=(1-\rho (\arg x))\tilde{\phi}(x,y).$$
 Thus, since $\tilde{\phi}$ is well defined on ${\mathbb{D}^+}(2)\times B(\mathfrak{r})$, the
 function $\tilde{\sigma}_1$ can be extended
to ${\mathbb{D}}^+(2)\times B(\mathfrak{r})$. 
 Therefore $F_1$ extends smoothly to 
 ${\mathbb{D}}^+(2)\times B(\mathfrak{r})$ as 
 $${F}_1(x,y)=(x,y+{\tilde{\sigma}}_1(x,y)).$$
 Now, exactly in the same way we obtained \eqref{cota13} we have
 \begin{align*}\label{cota13.1}\left|d_y{{\tilde{\sigma}}_1}(x,y)\right|, 
 \left|d_y{{\tilde{\sigma}}_1}(x,\phi_{-1}(x,y))\right| <1/2, \quad
  x\in{\mathbb{D}}^+(2),\ y\in B(\mathfrak{r}).
 \end{align*} From this, as we have done above  we prove that 
 ${F}_1$ is a diffeomorphism of ${\mathbb{D}}^+(2)\times  B(\mathfrak{r}) /2$ onto its image. 
\end{rem}
 
\begin{rem}\label{general} Exactly the same arguments used to prove that
 $$F_1(\{x\}\times B(\mathfrak{r}/2))\supset \{x\}\times B(\mathfrak{r}/4),\quad x\in\Delta_1$$
 work to prove that,   given a ball $B(\mathfrak{r}')$ of radius $\mathfrak{r}'\le \mathfrak{r}/2$ centered at $0\in\mathbb{C}^n$, we have
\begin{equation*} F_1(\{x\}\times B(\mathfrak{r}'))\supset \{x\}\times B(\mathfrak{r}'/2),\quad x\in\Delta_1.
\end{equation*}
\end{rem} 
\begin{pro}\label{fholder} 
 $F_1$ restricted to 
$$\nabla(\mathfrak{r}/2)\colon=\left\{(x,y)\in\mathbb{D}^*\times B\colon |\arg(x)|
<3 \pi/{4},\; |y|<\mathfrak{r}/2 \right\}$$  belongs to $D_4$.
\end{pro}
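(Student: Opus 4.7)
The plan is to verify that $\tilde{\sigma}_1$ is of order $4$ on $\nabla(\mathfrak{r}/2)$ in the sense of Definition \ref{definition}; since $F_1(x,y) = (x, y + \tilde{\sigma}_1(x,y))$ preserves the $x$-coordinate, this will immediately yield $F_1 \in D_4$. I would split $\nabla(\mathfrak{r}/2)$ according to the sign of $\re(x)$ and treat each half separately using the piecewise definition \eqref{deco1}.

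On $\nabla(\mathfrak{r}/2) \cap \{\re(x) \le 0\}$ the definition gives $\tilde{\sigma}_1 \equiv 0$, and the order estimate is trivial. On $\nabla(\mathfrak{r}/2) \cap \{\re(x) > 0\}$, which is the sector $V(-\pi/2, \pi/2, 1, \mathfrak{r}/2, n)$, I would write $\tilde{\sigma}_1 = (1 - \rho(\arg x))\tilde{\phi}(x,y)$ and estimate the two factors separately. Proposition \ref{phiala}(\ref{phial}) gives $\tilde{\phi} \in O_4$ on $\mathbb{D}^+(2) \times B$, and a fortiori on our smaller sector. Lemma \ref{argumento} applied with $[a,b] = [-\pi/2, \pi/2]$ gives $\rho(\arg x) \in O_0$ there, and hence $1 - \rho(\arg x) \in O_0$ by Proposition \ref{clase11}(\ref{itemsum}). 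Invoking Proposition \ref{clase11}(\ref{itemprod}) componentwise on the product of the $O_0$ scalar and the $O_4$ vector-valued $\tilde{\phi}$ then yields $(1-\rho(\arg x))\tilde{\phi} \in O_4$ on the right half.

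To conclude, I would combine the two regions: the order condition is just the pointwise upper bound $|x|^{k-4}|g(x,y)| \le C_k$ near the origin for each partial derivative $g$ of order $k$, and it holds separately in each region (trivially on the left, with $C_k = 0$), so taking the maximum of the two constants gives a uniform bound on $\nabla(\mathfrak{r}/2)$. The apparent main obstacle is the compatibility of the two pieces along the imaginary axis; however, this is resolved by the construction itself. Along the positive imaginary axis $\{\arg x = \pi/2\}$ the cutoff satisfies $\rho \equiv 1$ in a one-sided neighborhood, so $(1 - \rho)\tilde{\phi}$ vanishes identically and the gluing is smooth, while the negative imaginary axis $[0,-i]$ has been removed from $\Delta_1$, so no smoothness requirement arises across it. Hence no real obstacle remains.
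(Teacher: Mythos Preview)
Your proof is correct and follows essentially the same approach as the paper's: reduce to showing $(1-\rho(\arg x))\tilde{\phi}\in O_4$ and invoke the order of $\tilde{\phi}$ (via Proposition~\ref{phiala}(\ref{phial}), itself a consequence of Lemma~\ref{expon} and Proposition~\ref{margot}), Lemma~\ref{argumento} for the cutoff, and Proposition~\ref{clase11} for the product. Your explicit treatment of the two halves and of the gluing along the imaginary axes is more careful than the paper's terse version, which simply cites the relevant lemmas without discussing the case split.
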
 
\proof Recall that, if $(x,y)\in\mathbb{D}^+\times B(\mathfrak{r})$, 
$$F_1(x,y)=\left(x, y+[1-\rho(\arg x)]\tilde{\phi}\right),$$
so it is enough to show that  
$$\Big(\big[1-\rho(\arg x) \big]\tilde{\phi}\Big)\Big|_{\nabla(\mathfrak{r}/2)}$$  belongs to $O_4$, which in turn is consequence of Lemma \ref{expon}, Proposition \ref{margot}, Lemma \ref{argumento}
and Proposition \ref{clase11}. \qed

\subsection*{Construction of the vector fields $Y_1$ and $Y_2$.} 
The vector field $Z$ defines a holomorphic vector field ${\mathcal Z}$ in ${\mathcal M}'$ because $Z$ is invariant by the change of coordinates $\Psi$:
indeed,
 \[ (\Psi^{*} Z)  (x) \equiv Z(x \circ \Psi) \equiv Z(x) \equiv x  \]
implies $\Psi^{*} Z \equiv Z$. We define the vector fields 
 $$Y_1 \colon =(F)_*({\mathcal Z}) \quad \mathrm{and} \quad Y_2 \colon =(F)_*(i {\mathcal Z}) $$ 
in $F({\mathcal M}')$.
 The vector fields $Y_1$ and $Y_2$ are tangent to the foliation  $\mathcal{F}^{\scriptscriptstyle U}$  --- as defined in \eqref{fro11} of Proposition \ref{smoothrealization}. 
 From Remark \ref{general}, we have 
 $$F_1(\Delta_1\times B(\mathfrak{r}/2))\supset \Delta_1\times B(\mathfrak{r}/4),$$ 
 so  since $F_2((0,-i)\times B(\mathfrak{r}/4)) =  (0,-i)\times B(\mathfrak{r}/4)$, $Y_1$ and $Y_2$ are defined on the set 
 $${\mathbb D}^{*} \times B(\mathfrak{r}/4).$$

\begin{pro}\label{nabla}The vector fields $Y_1 - Z$ and $Y_2 - i Z$ restricted to 
$$\Big\{(x,y)\in\mathbb{D}^*\times B\colon |y|< {\mathfrak r}/4 \Big\}.$$  
belong to $O_3$. In particular, $Y_1$ and $Y_2$ extend to $U(r)$ (see Proposition
\ref{smoothrealization}) as $C^{2}$ vector fields for $r>0$ small enough. 
\end{pro}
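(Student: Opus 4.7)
The plan is to compute $Y_1 - Z$ via the explicit push-forward formula $Y_1 = F_{1*}Z$ in the chart $\psi_1$, then feed the result into the order calculus of Section \ref{secspecial}. The case $Y_2 - iZ$ then follows immediately: the push-forward by a real diffeomorphism is $\mathbb{C}$-linear on complex vector fields, so $Y_2 = iY_1$ and $Y_2 - iZ = i(Y_1 - Z)$.

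I would first localize. By Proposition \ref{fholder}, $F_1 \in D_4$ on $\nabla(\mathfrak{r}/2) = V(-3\pi/4, 3\pi/4, 1, \mathfrak{r}/2, n)$, and by Proposition \ref{pro:inverse} together with the remark following Lemma \ref{perturbacion}, $F_1^{-1}$ also belongs to $D_4$ on a slightly smaller sector. Outside $\nabla(\mathfrak{r}/2)$, within $\mathbb{D}^* \times B(\mathfrak{r}/4)$, one has $|\arg x| \ge 3\pi/4$ and hence $\re(x) < 0$; the piecewise definition of $\tilde{\sigma}_1$ then forces $F$ to equal the identity in the relevant chart, so $Y_1 \equiv Z$ and all derivatives of $Y_1 - Z$ vanish there.

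On the sector, I would use the algebraic decomposition
\[ Y_1 - Z \;=\; \bigl((dF_1 - I)\cdot Z\bigr)\circ F_1^{-1} \;+\; \bigl(Z\circ F_1^{-1} - Z\bigr). \]
The second summand has vanishing $x$-component since $x \circ F_1^{-1} = x$, while its $y$-components lie in $O_4$ by applying Proposition \ref{pro:discrete_derivative} to the smooth polynomial $\mathcal{A}y + G(x,y)$. For the first summand, the matrix $dF_1 - I$ has vanishing first row and remaining entries equal to first partial derivatives of $\tilde{\sigma}_1 \in O_4$, which by Proposition \ref{clase1} lie in $O_3$. Multiplying against the components of $Z$, namely $x \in O_1$ in the $x$-slot and $\mathcal{A}y + G \in O_0$ in the remaining slots, preserves the order $3$ by Proposition \ref{clase11}, and composition with $F_1^{-1}$ also preserves this order by Proposition \ref{pro:comp_O_D}. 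Summing gives $Y_1 - Z \in O_3$ on the sector, and together with the vanishing on its complement this yields $Y_1 - Z \in O_3$ on all of $V(-\pi,\pi, 1, \mathfrak{r}/4, n)$.

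The $C^{2}$ extension then follows by invoking Proposition \ref{pro:ck_extension} with $\alpha = 3$ and $k = 2$ on $Y_1 - Z$ (and similarly on $Y_2 - iZ$), producing a $C^{2}$ map on a set $\{|x| < r\} \times \{|y| < r\}$ whose derivatives up to order $2$ vanish on $E$; adding back the polynomial $Z$ (or $iZ$) and gluing with the $C^{\infty}$ values away from $E$ gives the desired $C^{2}$ extensions to $U(r)$ for $r$ small. The main obstacle I anticipate is not conceptual but careful bookkeeping: tracking which sector each of the $O$-calculus statements applies to, verifying that $F_1$ is the identity precisely where I claim along the boundary $\{\re x = 0\}$, and combining the sector estimate with the vanishing on the complement into a single $O_3$ statement on the larger sector $V(-\pi,\pi,\ldots)$.
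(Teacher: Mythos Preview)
Your argument for $Y_1 - Z \in O_3$ is essentially identical to the paper's: same algebraic decomposition
\[ Y_1 - Z = \bigl((dF_1 - I)\cdot Z\bigr)\circ F_1^{-1} + \bigl(Z\circ F_1^{-1} - Z\bigr), \]
same use of Propositions~\ref{pro:comp_O_D}, \ref{pro:discrete_derivative}, \ref{pro:inverse} from Section~\ref{secspecial}, same localization to the sector where $F_1 \in D_4$ combined with $Y_1 \equiv Z$ on the complement, and same invocation of Proposition~\ref{pro:ck_extension} for the $C^2$ extension.

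There is one genuine error: your reduction of the $Y_2$ case via $Y_2 = iY_1$ is false. The push-forward by a smooth diffeomorphism is $\mathbb{R}$-linear on real tangent vectors, and it commutes with the complex structure only when the diffeomorphism is holomorphic. But $F_1$ is \emph{not} holomorphic on the region $|\arg x| \le \pi/4$: the factor $\rho(\arg x)$ in $\tilde\sigma_1$ destroys $\mathbb{C}$-linearity of $dF_1$ in the $x$-direction. Concretely, one has $Y_2 = (F_1)_*(i\mathcal{Z}) = J\bigl((F_1)_*\mathcal{Z}\bigr) = J(Y_1)$, where $J$ is the pushed-forward almost complex structure treated in Section~\ref{secextension}; the whole content of Proposition~\ref{c1e} is that $J(u) - iu$ lies in $O_3$ but is nonzero on this sector, so $Y_2 \neq iY_1$ there.

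The fix is immediate: rerun your $Y_1$ argument verbatim with $Z$ replaced by the smooth vector field $iZ$. Proposition~\ref{pro:discrete_derivative} applies to $iZ$ just as well, and $(dF_1 - I)\cdot(iZ) \in O_3$ for the same reasons. This is exactly what the paper does (``the arguments for $Y_2$ are analogous'').
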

\begin{proof} 
We have $F_1 \in D_4$ by Proposition \ref{fholder} and thus $dF_1 - \mathrm{id} \in O_3$.
Moreover $F_{1}^{-1} \in D_4$ by Proposition \ref{pro:inverse}.
It follows from Proposition \ref{clase11} that
\[ dF_1 \cdot Z  - Z \in O_3. \]
 We have
$$Y_1 =(dF_1\cdot Z)\circ F_1^{-1} \implies Y_1 -  Z  = (Z \circ F_{1}^{-1} - Z) + [dF_1 \cdot Z - Z] \circ F_{1}^{-1}. $$
The second term on the right hand side of last equality belongs to $O_3$ by Proposition \ref{pro:comp_O_D}
whereas $Z \circ F_{1}^{-1} - Z \in O_4$ by Proposition \ref{pro:discrete_derivative}.
Therefore $Y_1 - Z$ belongs to $O_3$ for the domain $\nabla(\mathfrak{r}/4)$.
Thus,
since $Y_{1} - Z \equiv 0$ on 
$$\Big\{(x,y)\in\mathbb{D}^*\times B\colon \pi/4 < \arg(x) <7\pi/4, \; |y|< {\mathfrak r}/4 \Big\},$$ 
we get $Y_{1} - Z  \in O_3$ on  ${\mathbb D}^*\times  B({\mathfrak r}/4)$.
Clearly $Y_1$ extends to 
$\{x=0\} \times B(r)$, for some $r>0$, as a $C^{2}$ vector field such that  $(Y_{1})|_{x=0} \equiv Z |_{x=0}$ by 
Proposition \ref{pro:ck_extension}. 
The arguments for $Y_2$ are analogous and we get $(Y_{2})|_{x=0} \equiv i Z |_{x=0}$.
\end{proof} 
%\begin{rem}\label{f1inverso} It follows from the proof of Proposition \ref{nabla} that
 %$F_1^{-1}$ is of class $C^{2,\alpha}$ on  $\nabla(r)$.  \javier{} order $4$ \javier{}
%\end{rem}

\subsection*{End of the Proof of Proposition \ref{smoothrealization}}
By Proposition \ref{nabla}   we have that $Y_1$ and $Y_2$ extend to 
$$U(r)=\mathbb{D}\times B(r)$$ in the class $C^{2}$.  Therefore \eqref{fro0.8} of
Proposition \ref{smoothrealization} is proved.
Assertion  
\eqref{fro1} of Proposition \ref{smoothrealization} follows directly from the definition of $Y_1$ and $Y_2$. 
The vector fields $Y_1$ and $Y_2$ are real-linearly independent on ${\mathbb D}^{*} \times B(r)$ by construction. 
Since 
\begin{equation}
\label{equ:aux_ext}
Y_1 \equiv Z \quad \mathrm{and} \quad Y_{2} \equiv iZ \;\; \mathrm{ on } \;\; \{ 0 \} \times B(r),  
\end{equation} 
they are linearly independent on $U(r) \setminus \{0\}$,
whence we obtain  \eqref{fro2} of Proposition \ref{smoothrealization}.
Moreover, evidently \eqref{equ:aux_ext}  also proves \eqref{fro2.1} of Proposition \ref{smoothrealization}.
Since $Y_1$ is of class $C^2$ and all derivatives of order up to $2$ of $Y_1 - Z$ vanish on $\{x=0\}$ by Proposition \ref{pro:ck_extension}, 
we conclude that 
$$dY_1(0)=dZ(0),$$ so \eqref{fro3} of Proposition \ref{smoothrealization} is proved.\qed

\section{Extension of the almost complex structure}\label{secextension}
We first recall some basic facts about almost complex structures. Let $\mathscr{U}\subset\mathbb{R}^{2m}$ be an open set. Let $u_1,\dots, u_m$ be vector fields on 
$\mathscr{U}$ that are real linearly independent at every point of $\mathscr{U}$. An almost complex structure $\mathscr{J}$ on $\mathscr{U}$ is determined by  the vector fields
$$v_1=\mathscr{J}(u_1),\dots, v_m=\mathscr{J}(u_m).$$ 
An arbitrary choice of the vector fields  $v_1,\dots,v_m$ will indeed define an almost complex structure on $\mathscr{U}$ if the vector fields
$$u_1,\dots,u_m,v_1,\dots,v_m$$
define a real frame on $U$: in this case it is easy to see that 
$$\mathscr{J}(u_1)=v_1,\dots, \mathscr{J}(u_m)=v_m, \ \mathscr{J}(v_1)=-u_1,\dots, \mathscr{J}(v_m)=-u_m$$
actually define an almost complex structure. 

Now, let $U\subset\mathbb{C}^{n+1}$ as in Proposition \ref{smoothrealization}.  Consider in $U\backslash E$  the vector fields $$u=\frac{\partial}{\partial x}, 
\quad u_j=\frac{\partial}{\partial y_j}, \quad j=1,\dots, n,$$
regarded as real vector fields.
The almost complex structure $J$ on $U\backslash E$ is determined by the vector fields
$$J(u),\quad J(u_j),\quad j=1,\dots,n.$$
In the chart $\psi_1=(x,y)$ the map $F$ is given by  $F_1$, so the structure $J$ on the image of $F_1$ is the 
pushforward by  $F_1$ of the canonical complex structure.  Recall that, if
$a\in\Delta_1$, then $F_1$ maps the vertical $\{x=a\}$ biholomorphically into itself. Therefore $F_1$ preserves the canonical complex structure of $\displaystyle{\{x=a\}}$. This means that the restriction of $J$ to ${\{x=a\}}$ is just the canonical almost
complex structure of $\displaystyle{\{x=a\}}$. Then, since the vector fields $u_j$ are tangent to $\{x=a\}$, we have
that  $J(u_j)=iu_j $ on $\{x=a\}$. After doing 
the same argument with the other chart $\psi_2$ we conclude that
\begin{align}\label{fini}J(u_j)=iu_j \textrm{ on } U\backslash E, \quad j=1,\dots,n. 
\end{align}
Since the vector fields $u,u_1,J(u_1),\dots,u_n,J(u_n)$ extend smoothly to $\mathbb{C}^{n+1}$,
the  extension of $J$ to $U$ is equivalent to the extension of the vector field $J(u)$ to $U$ in such a way
the vector fields $$u,J(u),u_1,J(u_1),\dots,u_n,J(u_n)$$ define a real frame on $U$. 
Since $J$ is the pushforward by $F$ of the complex structure on $\mathcal{M}$, we have that $J(p)$, 
$p\in U$ is the standard
almost complex structure whenever $F$ has a complex linear derivative at the point  
$F^{-1}(p)$. Thus, in view of \eqref{fro} of Proposition \ref{smoothrealization},  the structure $J$ 
coincides with the standard one in the complement in $U\backslash E$ of the set  
  $$ \nabla\colon =\left\{(x,y)\in U\colon   -\frac{\pi}{4}\le\arg(x) \le\frac{\pi}{4}\right\}. $$
  In particular, we have 
 \begin{equation}\label{holderi}J(u)=iu\quad \textrm{on}\quad  U(r)\backslash \overline{\nabla}\subset U\backslash E .\end{equation}

\begin{pro}\label{c1e} $J(u)$  satisfies $J(u) -iu \in O_3$  and extends  to $U(r)$ in the class $C^{2}$ for $r>0$ small enough.  
\end{pro}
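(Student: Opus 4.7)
The plan is to work in the chart $\psi_1$ on the sector $\nabla$ (where $F$ fails to be holomorphic) and to exploit $F_1 \in D_4$ from Proposition~\ref{fholder} to control the deviation of the almost complex structure $J$ from the canonical one.

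First, by \eqref{holderi}, the vector field $J(u) - iu$ vanishes identically on $U(r) \setminus \overline{\nabla}$, so it suffices to estimate it on $\overline{\nabla} \cap (U(r) \setminus E)$. The cut $[0,-i]$ defining $\Delta_1$ is disjoint from $\overline{\nabla}$, so this region is entirely covered by the chart $\psi_1$, and there $F_1$ is a $D_4$-diffeomorphism by Proposition~\ref{fholder}. Since $J$ is the pushforward by $F_1$ of the canonical complex structure $J_0$ (with $J_0 u = iu$), the pointwise identity $J = dF_1 \cdot J_0 \cdot dF_1^{-1}$ applied to the constant vector field $u$ yields
\[ J(u) - iu \;=\; \bigl[\, (dF_1 - \mathrm{id})\,J_0 u \;+\; dF_1 \cdot J_0 \cdot (dF_1^{-1} - \mathrm{id})\,u \,\bigr] \circ F_1^{-1}. \]
By Proposition~\ref{pro:inverse}, $F_1^{-1} \in D_4$ as well, so both $dF_1 - \mathrm{id}$ and $dF_1^{-1} - \mathrm{id}$ belong to $O_3$. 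Entry-wise application of Proposition~\ref{clase11} shows that the bracketed expression, viewed as a function of the preimage variable, lies in $O_3$; then Proposition~\ref{pro:comp_O_D} ensures that its composition with $F_1^{-1} \in D_4$ remains in $O_3$. Consequently $J(u) - iu \in O_3$ on all of $U(r) \setminus E$.

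Finally, since $3 > 2$, Proposition~\ref{pro:ck_extension} extends $J(u) - iu$ as a $C^2$ function on some $U(r')$ with all derivatives of order up to $2$ vanishing on $E \cap U(r')$. Adding back the smooth field $iu$ yields a $C^2$ extension of $J(u)$ to $U(r')$ that agrees with $iu$ on $E \cap U(r')$, which is the asserted extension. The main obstacle is keeping track of orders under the composition with $F_1^{-1}$; this is handled exactly by Proposition~\ref{pro:comp_O_D}, while the matrix-level manipulations (products, sums, inversion of $dF_1$) are absorbed by the sum and product rules of Proposition~\ref{clase11}.
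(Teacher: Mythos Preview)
Your proof is correct and follows essentially the same approach as the paper: reduce to the sector $\nabla$, use $F_1\in D_4$ and $F_1^{-1}\in D_4$, telescope $J(u)-iu$ into pieces governed by $dF_1-\mathrm{id}$ and $(dF_1)^{-1}-\mathrm{id}$, apply the order calculus and Proposition~\ref{pro:comp_O_D}, and finish with Proposition~\ref{pro:ck_extension}. The paper uses the alternative telescoping $J(u)-iu=(dF_1\circ F_1^{-1}-\mathrm{id})\cdot i\,dF_1^{-1}\cdot u+(i\,dF_1^{-1}\cdot u-iu)$ in the image variable, whereas you work in the preimage variable and compose with $F_1^{-1}$ at the end; these are equivalent rearrangements of the same identity. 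One small point: when you assert $dF_1^{-1}-\mathrm{id}\in O_3$ ``as a function of the preimage variable'', you are really using $(dF_1)^{-1}-\mathrm{id}\in O_3$, which is established inside the proof of Proposition~\ref{pro:inverse} (or obtained from $d(F_1^{-1})-\mathrm{id}\in O_3$ by composing with $F_1\in D_4$ via Proposition~\ref{pro:comp_O_D}); it may be worth making this explicit.
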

\begin{proof}  
 If  $p\in \nabla(r)$ (see Proposition \ref{fholder}),
we have 
\begin{align}\label{j(u)}
  J(u)(p)&= dF_1(F_1^{-1}(p))\cdot idF_1^{-1}(p)\cdot u.
 \end{align} 
 The diffeomorphism $F_1$ belongs to $D_4$ (Proposition \ref{fholder}) and so does $F_{1}^{-1}$ by Proposition \ref{pro:inverse}.
 Therefore, we get
 \[ idF_1^{-1}\cdot u  - i u  \in O_3 .\]
Notice that 
\[ dF_1  \circ F_{1}^{-1} - \mathrm{id} = (dF_1 - \mathrm{id})  \circ F_{1}^{-1} + (F_{1}^{-1} -  \mathrm{id}).  \]
The first term in the right hand side belongs to $O_3$ by Proposition \ref{pro:comp_O_D} whereas $F_{1}^{-1} \in D_4$ implies that the second 
term belongs to $O_4$. Then
\[ dF_1  \circ F_{1}^{-1} - \mathrm{id} \in O_3 . \]
 Therefore, since 
 \[ J(u) - iu = (dF_1  \circ F_{1}^{-1} - \mathrm{id} ) \cdot idF_1^{-1}\cdot u +(idF_1^{-1}\cdot u  - i u ), \]
 we see that $J(u) - iu \in O_3$ on $\nabla(r)$ for $r>0$ small enough. Moreover $J(u) \equiv iu$ in $U(r) \setminus \overline{\nabla}$, so it is easy to see that
 $J(u) -i u \in O_3$ for $U(r) \setminus E$. It follows from Proposition \ref{pro:ck_extension} that $J(u)$ extends in the class $C^{2}$ to $U(r)$ for $r>0$ small enough and 
 $J(u)|_{E} \equiv (iu)|_{E}$ by Proposition \ref{pro:ck_extension}.
\end{proof}

\subsection*{Proof of Proposition \ref{jexten}}  
 By Proposition \ref{c1e} the vector field $J(u)$ extends to $U(r)$ in the class $C^{2}$. Then the vector fields $$u,J(u),u_1,J(u_1),\dots,u_n,J(u_n)$$ are 
of class $C^{2}$ on $U(r)$.  Thus, the first assertion of Proposition \ref{jexten} reduces to showing that the vector fields above define a real frame on $U(r)$. Since 
they do so on $U(r)\backslash E$, it remains to look at the points in $E\cap U(r)$, whence
the proof of the proposition reduces to the proof of its last assertion.  Let  $p\in E\cap U(r)$. From \eqref{fini}
we obtain that
 \begin{align}\nonumber J(u_j)(p)=iu_j (p), \quad j=1,\dots,n. 
\end{align} Then it suffices to prove that $J(u)(p)=iu(p),$ which is true because 
the equality  \eqref{holderi} extends to $E\cap U(r)$ by continuity.
\qed

\subsection*{Proof of Proposition \ref{new}}
 Since the structure $J$ on $U(r)\backslash E$ comes from the complex structure of $\mathcal{M}$, we have that 
$J|_{(U(r)\backslash E)}$ satisfies the integrability condition for almost complex structures. Thus, by continuity, $J$ satisfies the integrability condition on the whole $U(r)$ and,  by a direct application of Theorem \ref{teonw}, we obtain a local diffeomorphism ${G}_1$ of class $C^{2,1/(2n+2)}$ between neighborhoods of $0\in\mathbb{C}^{n+1}$, which takes the structure $J$ to the standard complex structure of  $\mathbb{C}^{n+1}$. 
The vector field $u$ is obviously tangent to the submanifold $\{y=0\}$. Since  $\{y=0\}$ is pointwise
fixed by $F_1$, from \eqref{j(u)} and  \eqref{holderi} we see that  $J(u)=i u$ on  $\{y=0\}$, which means
that $J(u)$ 
 is also tangent to the submanifold $\{y=0\}$. Therefore
$\{y=0\}$ is an almost complex submanifold of $(U(r),J)$. On the other hand, as we have seen in the beginning of this section, the restriction of $J$ to a
fiber $\{x=a\}$, $a\neq 0$ 
coincides with the canonical almost complex structure on $\{x=a\}$. Clearly  this fact extends to $E=\{x=0\}$ by continuity, which allow us to
   conclude that $E$ is also an almost complex submanifold of $(U(r),J)$.    Thus,  the images of
$\{y=0\}$ and $E$  by ${G}_1$ become 
complex submanifolds in $\mathbb{C}^{n+1}$, they are transverse to each other at the origin, one
of them has dimension one and the other has codimension one. Then we can find a local biholomorphism
 $$G_2\colon (\mathbb{C}^{n+1},0)\to (\mathbb{C}^{n+1},0)$$ taking these submanifolds to $\{y=0\}$  and $E$. Thus, if we choose $G=G_2\circ {G_1}$, take $r'>0$  small enough and $U'$ as in the statement of Proposition \ref{new}, and set $\Omega=G(U')$, 
 the assertions  \eqref{new1}, \eqref{new2}  and \eqref{new3} of Proposition \ref{new} hold.  
 Since $dG$ takes the   structure $J$ to the canonical complex structure, in particular we have that
 $$dG(0)\circ J(0)=idG(0).$$ Thus,  since Proposition \ref{jexten} assures that $J(0)$ is just the multiplication by $i$, we find that $dG(0)$ commutes with the canonical complex structure, which
 means that $$dG(0)\colon \mathbb{C}^{n+1}\to\mathbb{C}^{n+1}$$ is a complex linear isomorphism. 
 Since $G$ leave $\{y=0\}$ and $\{x=0\}$ invariant, the same holds for $dG(0)$.  Thus, 
 we take $dG(0)^{-1}\circ G$ instead of $G$, whence  \eqref{new1}, \eqref{new2}  and \eqref{new3} of Proposition \ref{new}  remain true, but now we have  $$dG(0)=\id,$$  
 so \eqref{new4} of Proposition \ref{new}  holds.
 Let $\mathcal{F}$ be the pushforward by $G$ of the foliation
  $\mathcal{F}^{\scriptscriptstyle U}|_{U' \backslash\{0\}}$.  Then  $\mathcal{F}$ is a $C^1$ foliation on
  $\Omega\backslash \{0\}$.  
  Recall that 
   the almost complex structure of $\mathcal{M}$ is taken by $F$ to the almost complex structure 
   $J$ on $U\backslash E$, which in turns is taken by $G$ to the canonical complex structure of 
   $\mathbb{C}^{n+1}$. This means that  $G\circ F$ maps 
   $$\tilde{\mathcal{M}}\colon=F^{-1}(U' \backslash E)$$ biholomorphically onto $\Omega\backslash E$ and takes
   the foliation 
  $\mathcal{F}^{\scriptscriptstyle \mathcal{M}}$ to the foliation $\mathcal{F}|_{\Omega\backslash E}$,
  whence $\mathcal{F}$ is holomorphic on $\Omega\backslash E$. It follows from Proposition
  \ref{extefol} that $\mathcal{F}$ is holomorphic on $\Omega\backslash \{0\}$, so  \eqref{new5} of Proposition \ref{new} holds. 
   Let $Y$ be the pushforward of $Y_1$ by the diffeomorphism $G$, that is
  $$Y(p)= dG(G^{-1}(p))\cdot Y_1(G^{-1}(p)),\quad p\in \Omega.$$ 
  Notice that $Y$ coincides with the pushforward of ${\mathcal Z}$ by $G \circ F$ in $\Omega \setminus E$ and hence is holomorphic
  in $\Omega \setminus E$. Since $Y$ is continuous, it is holomorphic in $\Omega$ and we conclude that $Y$ is a holomorphic vector
  field generating ${\mathcal F}$ with isolated singularity.
   Moreover, since $dG(0)=\id$ and
  $dY_1(0)= d Z (0)$   --- see Proposition \ref{smoothrealization}, we obtain 
  \[ dY(0)=dZ(0) \] 
  and thus assertion \eqref{new6} of Proposition \ref{new} follows. \\
  By   \eqref{hojaL} of Proposition  \ref{smoothrealization},
    $$\mathbb{D}^*\times\{0\}=F(L)$$ 
  is a leaf of $\mathcal{F}^{\scriptscriptstyle U}$.  Then
  $$ F(L) \cap U' = \{y=0\}\cap U'$$ 
  is a leaf of   $\mathcal{F}^{\scriptscriptstyle U}|_{U'}$.
  %Then 
  %$$\mathbb{D}^*\times\{0\}=\{y=0\}\cap U'$$ is a leaf of
  %$\mathcal{F}^{\scriptscriptstyle U}|_{U'}$. 
  Therefore  
  $$S\colon=G( \{y=0\}\cap U' )=(\{y=0\}\cap \Omega)\backslash \{0\} $$ 
  is a leaf of $\mathcal{F}$ --- that is, $\{y=0\}$ is a separatrix of $\mathcal{F}$ ---  and we have that
  $$\tilde{L}\colon= (G\circ F)^{-1}(S)\subset L$$
  is a leaf of $\mathcal{F}^{\scriptscriptstyle \mathcal{M}}$ restricted to 
  $$\tilde{\mathcal{M}}=F^{-1}(U' \backslash E).$$
  Thus, $G\circ F$ maps 
   $\tilde{\mathcal{M}}$  onto $\Omega\backslash E$ and maps the leaf $\tilde{L}$ of 
   $\mathcal{F}^{\scriptscriptstyle \mathcal{M}}|_{\tilde{\mathcal{M}}}$ onto the leaf $S$ of $\mathcal{F}$.
   Since  $G\circ F$ s a holomorphic equivalence between the foliations 
   $\mathcal{F}^{\scriptscriptstyle \mathcal{M}}|_{\tilde{\mathcal{M}}}$ and $\mathcal{F}|_{\Omega\backslash E}$, 
   the holonomy of $S$ is holomorphically conjugated to that of $\tilde{L}$, which in turn --- by Proposition \ref{hojal} ---  is holomorphically conjugated to $\mathfrak{h}$. 
  Proposition \ref{new}  is proved. \qed
 
 \section{An aplication of the realization theorem} \label{application}
 We will use Theorem   \ref{maintro1} to construct an example of a germ of vector field in $(\mathbb{C}^{3},0)$
that has closed leaves except in a hypersurface.
 The following example of local diffeomorphism in $(\mathbb{C}^{2},0)$ was introduced in the thesis of L. Lisboa \cite{Lisboa-Ribon:finite}:
 given $\lambda \in {\mathbb S}^{1}$ that is not a root of unity and such that $\liminf_{j \to \infty} \sqrt[j]{|\lambda^{j} -1|} = 0$, i.e. a Cremer number, 
 there exist $f \in \diff (\mathbb{C}^{2},0)$ of the form 
 \[ f(y_1,y_2) = (\lambda y_1, y_2 + a (y_1)), \]
 where $a(y_1) \in {\mathbb C}\{y_1\}$,
 and an open neighborhood $U$ of the origin such that any orbit of the restriction of $f$ to $U$ is finite.
 We remark that
 the positive orbit of a point $y$ in $U$ is the set
 \[ \{ f^{j} (y)\colon  j \in {\mathbb Z}_{\geq 0}, \ f^{k} (y) \in U \ \forall k=0,\dots, j \} ;\]
 the negative orbit is defined analogously. We say that the orbit of $y$  is finite if both the positive and negative orbits of  $y $
 are finite sets. There are two types of finite orbits, namely periodic orbits --- those such that $f^{k} (y) =y$ for some $k \in {\mathbb Z}^{*}$ ---
 and the orbits such that $f^{k} (y)$ is defined in $U$ for finitely many $k \in {\mathbb Z}$.

  It is easy to see that $f$ is formally conjugated to $(y_1, y_2) \mapsto (\lambda y_1, y_2)$ by a transformation of the form 
 $(y_1, y_2) \mapsto (y_1, y_2 - b(y_1))$. Moreover, $f$ has exactly one analytic invariant curve and one divergent formal
 invariant curve, namely $y_1 = 0$ and $y_2 = b(y_1)$, respectively.

 Denote $A = \mathrm{diag} (\mu_1, \mu_2)$ where we 
 choose $\mu_1 \in {\mathbb R}^{-} \setminus {\mathbb Q}$ such that $e^{2 \pi i \mu_1} = \lambda$ and $\mu_{2} \in {\mathbb Z}^{-}$.
 Since $f$ is formally linearizable we can apply Theorem \ref{maintro1}
 to obtain a vector field $Y$  of the form  \eqref{sistema0} whose holonomy is analytically conjugated to $f$. 
 The analytic invariant curve of $f$ generates a smooth invariant hypersurface for $Y$. Similarly, 
it can be 
 seen that there exists a divergent formal invariant hypersurface generated by the purely formal invariant curve of $f$.
 The restriction of $Y$    to $E=\{x=0\}$ is a system of the form 
 \[ y' = A \cdot y + G(0,y). \]
 Since $(\mu_1, \mu_2)$ is in the Poincar\'{e} domain and such eigenvalues have no resonances, $Y|_{E}$ is analytically conjugated to 
 $y' = A \cdot y$ by a tangent to the identity map $h \in \diff (\mathbb{C}^{2},0)$, so we can assume $G(0,y) \equiv 0$. Thus, if $\mathcal{F}$ is the foliation generated by $Y$ on a small enough neighborhood
 $W$ of the origin,  We have that 
 \begin{itemize}
 \item the axes  $x$, $y_1$ and $y_2$ are the separatrices of $\mathcal F$;
 \item the leaves of ${\mathcal F}$ in $E$ other than the separatrices are non-closed in $W$. 
 \end{itemize}
 Moreover, provided $W$ is small enough and using the fact that $f$ has finite orbits in some neighborhood of the origin, it is not difficult to prove that all the leaves outside $E$ and different from the $x$ axis are closed in $W$. 
 \begin{rem} Regarding a local one dimensional foliation, we say that a leaf is closed if its closure is a closed analytic curve. This definition allows us to consider the separatrices as closed leaves. Thus, the foliation ${\mathcal F}$ above is an example in which all non-closed leaves
 are concealed in a proper analytic set, namely the divisor $E$.  
 The example makes clear that if we have a one-dimensional foliation with 
 closed leaves and we consider a birational transformation, for instance a blow-up of a point, we can not assume a priori
 that the transform of the foliation still has closed leaves. 
 \end{rem}

\end{document}